  \newtheorem{remark}[theorem]{Remark}
  \newtheorem{example}[theorem]{Example}%{\color{black}\indent Example}
\title{Lower dimensional invariant tori for multi-scale Hamiltonian systems}
\author{Weichao Qian\thanks{School of Mathematics, Jilin University, Changchun 130012, P.R. China; Institute of Science and Technology Austria, Am Campus 1, 3400 Klosterneuburg, Austria}
({\tt qian\_wc@163.com}).
\and
Shuguan Ji\thanks{Corresponding authors. School of Mathematics and Statistics, and Center for Mathematics and Interdisciplinary Sciences,
Northeast Normal University, Changchun 130024, P.R. China} ({\tt jisg100@nenu.edu.cn;
       jishuguan@hotmail.com}).
      \and
      Yong Li\thanks{School of Mathematics, Jilin University, Changchun 130012, P.R. China; School of Mathematics and Statistics, and Center for Mathematics and Interdisciplinary Sciences,
Northeast Normal University, Changchun 130024, P.R. China} ({\tt liyongmath@163.com}). }
\begin{document}

\maketitle

\begin{abstract}
The ``Fundamental Theorem"  given by Arnold in \cite{Arnold} asserts the persistence of full dimensional invariant tori for 2-scale Hamiltonian systems. However, persistence in multi-scale systems is much more complicated and difficult. In this paper, we explore the persistence of lower dimensional invariant tori for multi-scale Hamiltonian systems, which play an important role in dynamics of resonant Hamiltonian systems. Moreover, using the corresponding results we give a quasi-periodic Poincar\'{e} Theorem for multi-scale Hamiltonian systems, i.e., at least $2^{m_0}$ families resonant tori survive small perturbations, where the integer $m_0$ is the multiplicity of resonance.
\end{abstract}

\begin{keywords}
Lower dimensional invariant tori, Resonant tori, Multi-scale Hamiltonian systems
\end{keywords}

\begin{AMS}
37J40
\end{AMS}

\pagestyle{myheadings} \thispagestyle{plain} \markboth{\sc W. Qian, S. Ji and Y. Li}{Lower dimensional invariant tori for multi-scale Hamiltonian systems}

\section{Introduction}\setcounter{equation}{0} \label{introduction}
In this paper, we study the persistence of lower dimensional invariant tori of the following Hamiltonian systems
 \begin{eqnarray}
\label{Eq1main}H(x,y,z, \lambda)&=& e+ \langle \omega(\lambda), y\rangle+ \frac{1}{2}\langle \left(
                                                                              \begin{array}{c}
                                                                                y \\
                                                                                z \\
                                                                              \end{array}
                                                                            \right),
          {M}(\lambda)\left(
                                                                              \begin{array}{c}
                                                                                y \\
                                                                                z \\
                                                                              \end{array}
                                                                            \right)\rangle+h+ \varepsilon^2 P(x,y, z,\lambda),~~~~~~~
\end{eqnarray}
 where $\omega(\lambda)= \sum\limits_{\tilde{\iota}_1 = 0}^m\varepsilon_{\tilde{\iota}_1} \omega_{\tilde{\iota}_1}(\lambda),$ $ M(\lambda) = \sum\limits_{\tilde{\iota}_2 = 1}^l \mu_{\tilde{\iota}_2} M_{\tilde{\iota}_2}(\lambda),$ $h= \sum\limits_{\tilde{\iota}_3 = 1}^l \sum\limits_{|i|+|j| \geq 3} \mu_{\tilde{\iota}_3}h_{ij}^{\tilde{\iota}_3}(\lambda) y^i z^j,$ $(x, y, z) \in T^n \times G \subset T^n \times R^n \times R^{2m}$, $G$ is a bounded closed region (closure of a bounded, nonempty open set), $\lambda \in \mathcal{O}$, $\mathcal{O}$ is a bounded closed region, $0< \varepsilon \leq \varepsilon_i, \mu_j\ll 1$, $0\leq i \leq m$, $0\leq j\leq l$, $P(x, y,z,\lambda)$ is a real analytic function. As a by-product, we obtain the persistence of resonant tori for multiscale Hamiltonian system with the following form:
\begin{eqnarray}\label{Eq2main}
H(x,y)= \sum\limits_{\iota =0}^m \varepsilon_{\iota} H_{\iota}(y) + \varepsilon^2 P(x,y),
\end{eqnarray}
where  $(x, y) \in T^d \times G \subset T^d \times R^d$, $G$ is a bounded closed region, $0<\varepsilon\leq \varepsilon_i\ll1$, $0\leq i\leq m$. Here the so-called resonant tori mean the frequency $\breve{\omega} = \sum\limits_{\iota =0}^m \varepsilon_{\iota} \partial_y H_{\iota}$ is resonant for some $y$, i.e., there is a least one $k\in Z^d \setminus \{0\}$ such that $\langle k, \breve{\omega}\rangle =0$.

\subsection{Melnikov's Persistence}

%Consider the following real analytic Hamiltonian
%\begin{eqnarray}\label{Eq64}
%H(x,y,z) = e + \varepsilon^a\langle w, y\rangle + \frac{1}{2} \langle z, \check{ M} z\rangle + \varepsilon P(x,y,z),
%\end{eqnarray}
%where $0<a<1$, $(x,y,z)\in T^n \times G \subset T^n \times R^n \times R^{2m}$, $G$ is a bounded closed region (closure of a bounded, nonempty open set), which describes many physical systems, for example, harmonic oscillators with resonance, properly degenerate Hamiltonian systems with resonance (we place some examples on Section \ref{EXAM}). Is there a family of invariant tori for Hamiltonian $(\ref{Eq64})$? To show the persistence of invariant tori of Hamiltonian $(\ref{Eq64})$, we have to deal with some {\it small divisors} of the form: $\varepsilon^a\langle k, w\rangle +l \neq 0$, $\forall k\in \mathbb{Z}^n\setminus\{0\}$. Such a small divisor problem is very difficult and has never been solved before. We need a new type of Meilnikov's conditions, whose measure estimate, as will be seen later, is very complex.

Let us do a brief recall for the Melnikov's persistence, i.e. the persistence of invariant tori for the following Hamiltonian systems:
\begin{eqnarray*}
H(x,y,z) = e + \langle w, y\rangle + \frac{1}{2} \langle z,\check{M}  z\rangle + \varepsilon P(x,y,z),
\end{eqnarray*}
where  $(x, y, z) \in T^n \times G \subset T^n \times R^n \times R^{2m}$, $G$ is a bounded closed region, which originated from Melnikov (\cite{Melnikov1,Melnikov2}) and was proved by Eliasson (\cite{Eliasson}). The Melnikov persistence problem has been extensively studied for hyperbolic case (\cite{Graff,Li8,Moser1}), for normally nondegenerate cases, i.e., $\check{M}$ is nonsingular (\cite{Bourgain,Broer2,Chierchia,Han1,Jorba,Li0,Li7,Liu,You,Zhao1}), and for infinite-dimensional Hamiltonian systems (\cite{Berti,Geng,Kuksin1,Poschel2,Wayne,Yuan,Zhang}). In the study of resonant tori, a particular lower dimensional tori, ones $\cite{Cong,Li}$ got the following Hamiltonian systems
\begin{eqnarray*}
H(x,y,z) = e + \langle w, y\rangle + \frac{\varepsilon^{a}}{2} \langle z, \check{M} z\rangle + \varepsilon P(x,y,z),
\end{eqnarray*}
where $0<a<1$, and studied the persistence of invariant tori. However, to our knowledge, there is rare study on the following persistence of invariant tori in Hamiltonian systems
\begin{eqnarray}\label{Eq64}
H(x,y,z) = e + \varepsilon^a\langle w, y\rangle + \frac{1}{2} \langle z, \check{ M} z\rangle + \varepsilon P(x,y,z),
\end{eqnarray}
where $0<a<1$, which describes many physical systems, for example, harmonic oscillators with resonance, properly degenerate Hamiltonian systems with resonance (we place some examples on Section \ref{EXAM}). To obtain such Melnikov's persistence, one has to deal with some {\it small divisors} of the form: $\varepsilon^a\langle k, w\rangle +l \neq 0$, $\forall k\in \mathbb{Z}^n\setminus\{0\}$. Such a small divisor problem is very difficult and has never been solved before. In the present paper, we will touch this persistence. Obviously, Hamiltonian system (\ref{Eq64}) is a special case of (\ref{Eq1main}). We will show the persistence of invariant tori of Hamiltonian systems (\ref{Eq1main}), Theorem \ref{TH1} on page 4, and imply the Melnikov's persistence of Hamiltonian system (\ref{Eq64}).

\subsection{Multi-scale Hamiltonian Systems}
As is known to all, the celebrated KAM theory due to Kolmogorov \cite{Kolmogorov}, Arnold \cite{Arnold2} and Moser \cite{Moser} asserts the persistence of full dimensional invariant tori for nearly integrable Hamiltonian systems under Kolmogorov nondegenerate condition, i.e., the Hessian  of the integrable part is nondegenerate, which could be weakened to R\"{u}ssmann nondegenerate condition (\cite{Cheng,Chow,Russmann2,Sevryuk2,XuJ}): the image of the frequency map is not contained in any hyperplane passing through the origin.

Lots of systems in celestial mechanics are degenerate, for which the KAM theory does not work due to certain degeneracy resulting from multiple scales. To overcome the degeneracy, Arnold introduced properly degenerate Hamiltonian systems, i.e. $2$-scale Hamiltonian systems, proved ``Fundamental Theorem": {\it most of non-resonant tori for $2$-scale Hamiltonian systems is persistent}, and applied the corresponding result to planar lunar problem (\cite{Arnold}). But 2 scales are not enough to remove the degeneracy of some systems in celestial mechanics, for example, the comet spatially restricted three-body problem (\cite{Meyer3}) and the spatial lunar problem (\cite{Meyer4}). These all require a general version of ``Fundamental Theorem". The paper \cite{han} introduced multi-scale Hamiltonian systems and showed the persistence of full dimensional invariant tori if there is an order relationship for multiple scales, which has many applications (\cite{Cors,Daniela,Meyer3,Meyer4}). When there is no order relationship, the paper \cite{Qian} studied the persistence of full dimensional invariant tori.

Naturally, one asks the question:
\emph{does the general version of ``Fundamental Theorem" hold for lower dimensional invariant tori}?  In detail, is there a family of invariant tori for the following Hamiltonian systems
\begin{eqnarray}\label{Eq54}
H(x,y,z,\lambda)&=& \langle \omega(\lambda), y\rangle+ \frac{1}{2}\langle z,
          M(\lambda) z\rangle + \varepsilon^2 P(x,y, z),
\end{eqnarray}
where $\omega(\lambda)= \sum\limits_{\tilde{\iota}_1 = 0}^m\varepsilon_{\tilde{\iota}_1} \omega_{\tilde{\iota}_1}(\lambda),$ $ M(\lambda) = \sum\limits_{\tilde{\iota}_2 = 1}^l \mu_{\tilde{\iota}_2} M_{\tilde{\iota}_2}(\lambda),$ $0< \varepsilon \leq \varepsilon_i, \mu_j\ll 1$, $0\leq i \leq m$, $0\leq j\leq l$. Obviously, the system also exhibits multiple scales near the relative equilibrium $z=0$. The work \cite{Xu3} studied the persistence of invariant tori with slightly deformed Diophantine frequencies not involving the small divisors such as Hamiltonian $(\ref{Eq64})$. In the present paper, we will consider not only the persistence with the same frequency, but also the persistence of frequency ratio on a given energy surface.
Comparing the classical KAM theory, due to some elements of $\varepsilon \tilde M^{-1}$ maybe unbounded as $\varepsilon\rightarrow0$, it is much more difficult to estimate the inverse of some multi-scale matrix. The proof is quite complicated and will be placed in Appendix \ref{Inverse}. Obviously, (\ref{Eq54}) is also a special case of (\ref{Eq1main}). Therefore, Theorem \ref{TH1} on page 4 imply the persistence of lower dimensional invariant tori for multi-scale Hamiltonian (\ref{Eq54}). In other words, as a by-product, we obtain ``Fundamental Theorem" for lower dimensional invariant tori.

\subsection{Resonant Invariant Tori}
As we all know, lower dimensional invariant tori come from resonant tori, which originated from Poincar\'{e} in studying the persistence of finite periodic orbits  under small perturbation (Poincar\'{e} Theorem). Periodic orbit is a particular type of resonant tori and there is no ``small divisor".
The general case of resonance is extremely complicated. And it is important to study the mechanisms which lead to the destruction of resonant tori and the number of surviving resonant tori under small perturbations (\cite{Cong,Li,Treshchev}).

Naturally, we have to consider the question: \emph{does the quasi-periodic Poincar\'{e} Theorem hold for multi-scale Hamiltonians}? In this paper, we will also touch this problem. When there is an order relationship among multiple scales, papers \cite{Xu1,Xu2} studied the persistence of resonant tori for multi-scale Hamiltonian systems. The present paper will study cases that there is no order relationship among multiple scales, which result from resonant tori at high degeneracy, and study the persistence of resonant tori and how many family of resonant tori will survive small perturbation.

Specifically, on a lower dimensional manifold, as a resonant manifold, the resonant multi-scale Hamiltonians (\ref{Eq2main}) could be reduced to (\ref{Eq1main}) and using the persistence of invariant tori for Hamiltonian (\ref{Eq1main}) we show the persistence of resonant tori on a lower dimensional manifold (see {Theorem \ref{resonant}}).

\subsection{Main Results}
Consider the following Hamiltonian systems
 \begin{eqnarray}
\label{701} H(x,y,z, \lambda)&=& e+ \langle \omega(\lambda), y\rangle+ \frac{1}{2}\langle \left(
                                                                              \begin{array}{c}
                                                                                y \\
                                                                                z \\
                                                                              \end{array}
                                                                            \right),
          {M}(\lambda)\left(
                                                                              \begin{array}{c}
                                                                                y \\
                                                                                z \\
                                                                              \end{array}
                                                                            \right)\rangle+h+ \varepsilon^2 P(x,y, z,\lambda),~~~~~~~
\end{eqnarray}
 where $\omega(\lambda)= \sum\limits_{\tilde{\iota}_1 = 0}^m\varepsilon_{\tilde{\iota}_1} \omega_{\tilde{\iota}_1}(\lambda),$ $ M(\lambda) = \sum\limits_{\tilde{\iota}_2 = 1}^l \mu_{\tilde{\iota}_2} M_{\tilde{\iota}_2}(\lambda),$ $h= \sum\limits_{\tilde{\iota}_3 = 1}^l \sum\limits_{|i|+|j| \geq 3} \mu_{\tilde{\iota}_3}h_{ij}^{\tilde{\iota}_3}(\lambda) y^i z^j,$ $(x, y, z) \in T^n \times G \subset T^n \times R^n \times R^{2m}$, $G$ is a bounded closed region (closure of a bounded, nonempty open set), $\lambda \in \mathcal{O}$, $\mathcal{O}$ is a bounded closed region, $0< \varepsilon \leq \varepsilon_i, \mu_j\ll 1$, $0\leq i \leq m$, $0\leq j\leq l$, $P(x, y,z,\lambda)$ is a real analytic function.

Denote $M_{i}(\lambda)= \left(
    \begin{array}{cc}
      M_{i,11} & M_{i,12} \\
      M_{i,21} & M_{i,22} \\
    \end{array}
  \right),$ where $M_{i,11} = M_{i,11}^T$, $M_{i,12} = M_{i,21}^T$, $M_{i,21}$, $M_{i,22} = M_{i,22}^T$ are $n\times n$, $n\times 2m$, $2m\times n$, $2m \times 2m$ submatrix of $M_i$, $0\leq i\leq l$, respectively. Define $
M_{ij}(\lambda)= \sum\limits_{\iota = 1}^l \mu_\iota M_{\iota,ij},\  i, j = 1,2 ,$ and $\Delta(y,z) = M_{11}(\lambda)y + M_{12}(\lambda)z + \partial_y h(y,z,\lambda).$
In this paper, $J$ denotes symplectic matrices of certain dimensions, which match the symplectic structure of the Hamiltonian systems. Let
\begin{eqnarray}
\label{Eq317}\mathcal{A}_1 &=& \left(
     \begin{array}{cc}
       L_{k0} I_n & -M_{21}^TJ \\
       0 & L_{k1}  \\
     \end{array}
   \right),\\
\label{Eq318}\mathcal{A}_2 &=&
  \left(
    \begin{array}{ccc}
      I_n\otimes L_{k0}I_n & I_n \otimes (M_{21}^T J) & 0 \\
      0 & I_n \otimes L_{k1} & - (M_{21}^T J)\otimes I_{2m} \\
      0 & 0 & L_{k2} \\
    \end{array}
  \right),
\end{eqnarray}
where
\begin{eqnarray}
\label{Eq319}L_{k0}&=&\sqrt{-1} \langle k, \omega\rangle,\\
\label{Eq320}L_{k1}&=&\sqrt{-1} \langle k, \omega\rangle I_{2m} - M_{22}J,\\
\label{Eq321}L_{k2}&=&\sqrt{-1} \langle k, \omega\rangle I_{4m^2} - (M_{22}J) \otimes I_{2m} - I_{2m}\otimes(M_{22}J).
\end{eqnarray}
For convenience to state our results, we first list the following assumptions:
\begin{itemize}
\item[{(D)}] There is a constant $N$ such that $
{\rm rank} \{\partial_\lambda^\alpha \omega: |\alpha| \leq N\}= n,$
where $\omega= \sum\limits_{\tilde{\iota}_1 = 0}^m\varepsilon_{\tilde{\iota}_1} \omega_{\tilde{\iota}_1}(\lambda).$
\end{itemize}

\begin{itemize}
\item[{(M1)}]There is a finite positive integer $N$ such that for $\lambda\in \mathcal{O}$, $
{\rm rank} \{c_i: 1\leq i\leq n+2m\} = n+2m,$
where $c_i$ is a column of $\{\partial_\lambda^\alpha \mathcal{A}_1^i: 0\leq |\alpha|\leq N \}$, $\mathcal{A}_1^i$ is the $i$-th column of $\mathcal{A}_1$.
\end{itemize}

\begin{itemize}
\item[{(M2)}] There is a finite positive integer $N$ such that for $\lambda\in \mathcal{O}$, $
{\rm rank} \{\bar{c}_i: 1\leq i\leq (n+2m)^2\} = (n+2m)^2,$
where $\bar{c}_i$ is a column of $\{\partial_\lambda^\alpha \mathcal{A}_2^i: 0\leq |\alpha|\leq N \}$, $\mathcal{A}_2^i$ is the $i$-th column of $\mathcal{A}_2$.
\end{itemize}

\begin{itemize}
\item [{(C1)}] $M^T M \geq \big(\min\{\mu_1, \cdots, \mu_l\}\big)^2 I_{n+ 2m}$.
 \end{itemize}
 \begin{itemize}

   \item [{(C2)}] $\left(
            \begin{array}{cc}
              M & \omega \\
              \omega^T & 0 \\
            \end{array}
          \right)^T \left(
            \begin{array}{cc}
              M & \omega \\
              \omega^T & 0 \\
            \end{array}
          \right) \geq \big(\min\{\varepsilon_1, \cdots, \varepsilon_m, \mu_1,\cdots, \mu_l\}\big)^2 I_{n+ 2m+1}.$
\end{itemize}

\begin{theorem}\label{TH1}
Consider Hamiltonian system $(\ref{701})$.
\begin{enumerate}
  \item [{(i)}] If ${(D)}$, ${(M1)}$, ${(M2)}$ and ${(C1)}$ hold, then there exist a $\varepsilon_0>0$ and a family of Cantor set $\mathcal{O}_\varepsilon \subset \mathcal{O},$ $0<\varepsilon<\varepsilon_0$, such that each $n$-torus $T_\lambda$, $\lambda \in \mathcal{O}_\varepsilon$, persists and gives rise to a perturbed $n$-torus $T_{\varepsilon,\lambda}$ with the same frequency. Moreover, the relative Lebesgue measure $|\mathcal{O}\setminus \mathcal{O}_\varepsilon| \rightarrow 0$ as $\varepsilon \rightarrow 0.$
  \item [{(ii)}]If
${(D)}$, ${(M1)}$, ${(M2)}$ and ${(C2)}$ hold on a given energy surface $\Xi = \{ \lambda: N(\lambda) = c\}$, then there exist a $\varepsilon_0>0$ and a family of Cantor set $\Xi_\varepsilon \subset \Xi,$ $0<\varepsilon<\varepsilon_0$, such that each $n$-torus $T_\lambda$, $\lambda \in \Xi_\varepsilon$, persists and gives rise to a perturbed $n$-torus $T_{\varepsilon,\lambda}$, on which the frequency ratio is maintained. Moreover, the relative Lebesgue measure $|\Xi\setminus \Xi_\varepsilon| \rightarrow 0$ as $\varepsilon \rightarrow 0.$
\end{enumerate}
\end{theorem}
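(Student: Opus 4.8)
The plan is to prove both parts by a parameter–dependent, Newton–type KAM iteration carried out while keeping track of all the small parameters $\varepsilon_i,\mu_j$ simultaneously. Throughout we write $\varepsilon=\min\{\varepsilon_i,\mu_j\}$ for the basic scale, so that the $\varepsilon^2P$ size of the perturbation leaves a margin against the negative powers of $\varepsilon$ that will be lost when inverting the linearized operators. At the $s$-th stage the Hamiltonian will read $H_s=e_s+\langle\omega(\lambda),y\rangle+\frac12\langle(y,z),M(\lambda)(y,z)\rangle+h_s+P_s$ on a strip whose analyticity radius has decreased by a summable amount, over a closed parameter set $\mathcal{O}_s\subset\mathcal{O}$, with $\|P_s\|$ of order $\varepsilon^{2(1+\sigma)^s}$ for a fixed $\sigma\in(0,1)$; the induced drift of $e_s$, $M$ and $h_s$ is of higher order in $\varepsilon$ and is reabsorbed. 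The unperturbed torus $T_\lambda=T^n\times\{(0,0)\}$ is invariant with frequency exactly $\omega(\lambda)$ because $\partial_yh$ and $\partial_zh$ vanish at the origin (every term of $h$ has degree $\ge 3$). The $k=0$ part of the $(y,z)$-linear term produced at each step — the obstruction to keeping the frequency — is killed by a small translation $(y,z)\mapsto(y+\eta,z+\zeta)$; solving for $(\eta,\zeta)$ amounts to inverting $M$ (together with the higher–order corrections from $h$, so that $\Delta(y,z)$ is locally invertible), which is exactly what (C1) provides, with $\|M^{-1}\|\le(\min\{\mu_j\})^{-1}$, so the perturbed torus can be kept with the prescribed frequency $\omega(\lambda)$ for the same $\lambda$. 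In part (ii) one instead works on the level set $\Xi$ and asks only that the direction of $\omega$ be preserved; the translation is then determined by the bordered system governed by the matrix in (C2), whose invertibility with operator-norm bound $(\min\{\varepsilon_i,\mu_j\})^{-1}$ is precisely the content of (C2).

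The heart of the matter is the homological equation for the $k\neq0$ modes. Writing the generating Hamiltonian as $F=F_0(x)+\langle F_1(x),(y,z)\rangle+\frac12\langle(y,z),F_2(x)(y,z)\rangle$ and expanding in Fourier series in $x$, the requirement that the time-one flow of $F$ remove the bad part of $P_s$ (nonzero Fourier mode, degree $\le2$ in $(y,z)$) reduces, mode by mode, to linear systems whose coefficient matrices are precisely $L_{k0}$ for $\hat F_0(k)$, $\mathcal{A}_1$ for $\hat F_1(k)$, and $\mathcal{A}_2$ for $\hat F_2(k)$, with $L_{k1},L_{k2}$ the diagonal blocks and $-M_{21}^TJ$ the coupling between the $y$- and $z$-directions. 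Solvability is thus equivalent to quantitative invertibility of $\mathcal{A}_1$ and $\mathcal{A}_2$ for every $k\in Z^n\setminus\{0\}$; we enforce it by excising from $\mathcal{O}_s$ the parameters at which $\|\mathcal{A}_1^{-1}\|$ or $\|\mathcal{A}_2^{-1}\|$ exceeds $\gamma^{-1}|k|^{\tau}$. The genuinely new difficulty — the one distinguishing this from classical lower dimensional KAM — is that these inverses must be bounded uniformly as $\varepsilon\to0$, even though the scales $\mu_j$ are not ordered relative to the $\varepsilon_i$: the eigenvalues of $M_{22}J$ entering $L_{k1},L_{k2}$ may be far smaller than any $|\langle k,\omega\rangle|$ one can keep away from zero, so the naive Diophantine/second–Melnikov estimate fails. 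This is where (C1), (C2) do their real work: the lower bounds they place on the least singular value of $M$, respectively of the bordered matrix, feed into the multi-scale matrix-inverse estimates of Appendix~\ref{Inverse} to give solutions of the homological equation with a loss that is only a fixed negative power of $\varepsilon$, hence harmless against the super-exponential gain in $\|P_{s+1}\|$.

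Granting these homological estimates, the iteration proceeds in the usual way: bound $P_{s+1}$ (quadratically small times the divisor loss), control the accumulated coordinate changes and the drift of $e_s,M,h_s$, verify that the analyticity radii stabilize, and pass to the limit on $\mathcal{O}_\varepsilon=\bigcap_s\mathcal{O}_s$ to obtain an analytic embedding of the perturbed torus $T_{\varepsilon,\lambda}$ with unchanged frequency (part (i)) or unchanged frequency ratio (part (ii)). It then remains to estimate $|\mathcal{O}\setminus\mathcal{O}_\varepsilon|$, a union over $k$ of the sets where $|\langle k,\omega(\lambda)\rangle|<\gamma|k|^{-\tau}$ or $\det\mathcal{A}_1$ or $\det\mathcal{A}_2$ is too small. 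Here (D), (M1), (M2) are exactly the R\"{u}ssmann-type nondegeneracy conditions making the quantitative transversality argument run: (D) says $\omega$ together with its $\lambda$-derivatives up to order $N$ spans $R^n$, and (M1), (M2) say the same for the columns of $\mathcal{A}_1$ and of $\mathcal{A}_2$; the R\"{u}ssmann lemma (a curve not contained in any hyperplane through the origin spends only polynomially little time near it) then bounds each excised slice by a quantity of the form $C\gamma^{\beta}|k|^{-\rho}$ with $\beta>0$ and $\rho$ large enough that $\sum_k$ converges, giving $|\mathcal{O}\setminus\mathcal{O}_\varepsilon|\le C\gamma^{\beta}$. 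Choosing $\gamma=\gamma(\varepsilon)\to0$ slowly enough to preserve the convergence of the iteration yields $|\mathcal{O}\setminus\mathcal{O}_\varepsilon|\to0$ as $\varepsilon\to0$, and likewise $|\Xi\setminus\Xi_\varepsilon|\to0$ with all data restricted to $\Xi$.

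The step I expect to be the main obstacle is precisely the uniform control, as the several scales degenerate, of $\mathcal{A}_1^{-1}$ and especially $\mathcal{A}_2^{-1}$ — equivalently of the resolvents $\big(\sqrt{-1}\langle k,\omega\rangle I-\mathrm{spec}(M_{22}J)\big)^{-1}$ and their symmetric–tensor analogue encoded in $L_{k2}$ — since a loss larger than a fixed power of $\varepsilon$ would break the convergence of the scheme and no amount of parameter excision repairs it. Handling this requires the detailed block structure of $\mathcal{A}_1,\mathcal{A}_2$ together with the singular-value bounds (C1), (C2); this is exactly why the corresponding estimates are isolated and proved separately in Appendix~\ref{Inverse}.
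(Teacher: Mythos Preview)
Your overall architecture matches the paper's --- KAM iteration, translation in $(y,z)$ to fix the frequency (or its direction), homological equations with coefficient matrices $L_{k0},\mathcal{A}_1,\mathcal{A}_2$, parameter excision, R\"ussmann-type measure estimate via (D), (M1), (M2) --- and you correctly assign (C1), (C2) to the translation step. But you then also invoke (C1), (C2) for the homological equations, and that is where the proposal goes off track.

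Conditions (C1), (C2) say nothing about $\mathcal{A}_1^{-1}$ or $\mathcal{A}_2^{-1}$: a lower bound on $M^TM$ gives no control on $L_{k1}=\sqrt{-1}\langle k,\omega\rangle I-M_{22}J$, since $\langle k,\omega\rangle$ and the spectrum of $M_{22}J$ can nearly cancel however nondegenerate $M$ is. The paper's mechanism is different and cleaner: the nonresonance set $\mathcal{O}_+$ is defined by the \emph{scale-weighted} conditions $\mathcal{A}_i^*\mathcal{A}_i\ge\bigl(\min_{\hat\iota_1,\hat\iota_2}\{\varepsilon_{\hat\iota_1},\mu_{\hat\iota_2}\}\,\gamma/|k|^\tau\bigr)^2 I$, so that $\|\mathcal{A}_i^{-1}\|\le|k|^\tau\big/\bigl(\min\{\varepsilon_{\hat\iota_1},\mu_{\hat\iota_2}\}\,\gamma\bigr)$; the prefactor $\varepsilon\le\min\{\varepsilon_{\hat\iota_1},\mu_{\hat\iota_2}\}$ on the perturbation then cancels this exactly, and the resulting bound on $F$ is scale-free (Lemma~\ref{Eq53}). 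What makes this excision cheap in measure is (M1), (M2) --- not (C1), (C2) --- together with the eigenvalue lemma of Appendix~\ref{Inverse}, which feeds the measure estimate (Lemma~\ref{estimate measure}), not the homological solve. Two further points your outline will need: the actual homological coefficient matrices are $\mathcal{A}_1,\mathcal{A}_2$ perturbed by $\varpi=\sqrt{-1}\langle k,\Delta\rangle$ and by $\hat h_1,\hat h_2=O(s)$, and the paper absorbs these via Weyl's theorem, which forces the domain restriction $s\le\varepsilon^4$; this in turn requires a preliminary finite normal-form reduction (Section~\ref{041}) bringing the perturbation down to $O(\varepsilon^9)$ before the main iteration can start --- a step you omit.
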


\begin{remark}\label{ergodic}
A system is called quasi-ergodic if it has an everywhere dense trajectory. The quasi-ergodic hypothesis states that a generic Hamiltonian system is quasi-ergodic on typical connected components of the energy levels {(\cite{Arnold1})}. However, our result ({{Theorem~\ref{TH1}}}) shows that the quasi-ergodic hypothesis is false for multi-scale Hamiltonian system {(\ref{701})}.
\end{remark}

\begin{remark}
Condition ${(C1)}$ could be weakened to
\begin{itemize}
\item [{$(C1')$}] $M_{22}^T M_{22} \geq \big(\min\{\mu_1, \cdots, \mu_l\}\big)^2 I_{n+ 2m}$,
\end{itemize}
which is enough to remove the resonant terms $\langle p_{001}, z\rangle$ during each KAM step.
But there will exist drift for frequency of the perturbed tori. Moreover, condition ${(D)}$ is not enough, because the new frequency after a KAM step will depend on $\mu_j$, $1\leq j\leq l.$ We will study this case later.
\end{remark}
\begin{remark}
Condition ${(C2)}$ is called the isoenergetically nondegenerate condition to the persistence of lower dimensional invariant tori for multi-scale Hamiltonians. The isoenergetically nondegenerate condition was first considered by Arnold ({\cite{Arnold1}}). For isoenergetically nondegenerate conditions on manifolds, we refer to {\cite{Chow,Qian1,Sevryuk1,Zhao}}. Similarly, condition ${(C2)}$ could also be weakened. And we will also study it later.
\end{remark}
\begin{remark}
When having no multi-scale, condition ${(D)}$ is equivalent to a well known R\"{u}ssmann nondegenerate condition. For nondegenerate conditions in classical Hamiltonian systems, refer to {\cite{Cheng,Russmann2,Sevryuk2,XuJ}}.
\end{remark}

We make the following assumptions:
\begin{itemize}
  \item [(M1$'$)] There is a finite positive integer $N$ such that for $\lambda\in \mathcal{O}$,
$rank \{c_i': 1\leq i\leq 2m\}= 2m,$
where $c_i'$ is a column of $\{\partial_\lambda^\alpha L_{k1}^i: 1\leq |\alpha|\leq N \}$, $L_{k1}^i$ is the $i$-th column of $L_{k1}$;
  \item [(M2$'$)] There is a finite positive integer $N$ such that for $\lambda\in \mathcal{O}$, $rank \{\bar{c}_i': 1\leq i\leq 4m^2\} = 4m^2,$
where $\bar{c}_i'$ is a column of $\{\partial_\lambda^\alpha L_{k2}^i: 1\leq |\alpha|\leq N \}$, $L_{k2}^i$ is the $i$-th column of $L_{k2}$.
\end{itemize}
Then, according to {Theorem \ref{TH1}}, we have the following corollary whose proof is placed on Section \ref{POC}.
\begin{corollary}\label{cor.}
Consider Hamiltonian $(\ref{701})$.
\begin{enumerate}
  \item [{(i)}] Assume ${(D)}$, ${(C1)}$, ${(M1')}$ and ${(M2')}$. Then there are a $\varepsilon_0>0$ and a family of Cantor set $\mathcal{O}_\varepsilon \subset \mathcal{O},$ $0<\varepsilon<\varepsilon_0$, such that each $n$-torus $T_\lambda$, $\lambda \in \mathcal{O}_\varepsilon$, persists and gives rise to a perturbed $n$-torus $T_{\varepsilon,\lambda}$ with the same frequency. Moreover, the relative Lebesgue measure $|\mathcal{O}\setminus \mathcal{O}_\varepsilon| \rightarrow 0$ as $\varepsilon \rightarrow 0.$
  \item [{(ii)}] Assume ${(D)}$, ${(C2)}$, ${(M1')}$ and ${(M2')}$ hold on a given energy surface $\Xi = \{ \lambda: N(\lambda) = c\}$. Then there are a $\varepsilon_0>0$ and a family of Cantor set $\Xi_\varepsilon \subset \Xi,$ $0<\varepsilon<\varepsilon_0$, such that each $n$-torus $T_\lambda$, $\lambda \in \Xi_\varepsilon$, persists and gives rise to a perturbed $n$-torus $T_{\varepsilon,\lambda}$, on which the frequency ratio is maintained. Moreover, the relative Lebesgue measure $|\Xi\setminus \Xi_\varepsilon| \rightarrow 0$ as $\varepsilon \rightarrow 0.$
\end{enumerate}
\end{corollary}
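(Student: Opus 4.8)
\emph{Proof plan.} The plan is to deduce Corollary~\ref{cor.} directly from Theorem~\ref{TH1}. Since conditions $(C1)$ and $(C2)$ appear verbatim in both statements, the only thing to verify is that, at each relevant $\lambda$ and for every $k\in\mathbb{Z}^n\setminus\{0\}$, conditions $(D)$ and $(M1')$ together imply $(M1)$, and conditions $(D)$, $(M1')$, $(M2')$ together imply $(M2)$. Granting this, part (i) of the corollary is Theorem~\ref{TH1}(i) (which also needs $(C1)$) and part (ii) is Theorem~\ref{TH1}(ii) (which also needs $(C2)$ on $\Xi$). Throughout I read $(M1)$, $(M2)$, $(M1')$, $(M2')$ with a single integer $N$ valid for all $k$; the $N$ I produce for $(M1)$, $(M2)$ will be the maximum of those furnished by $(D)$, $(M1')$, $(M2')$, which stays $k$-uniform because the $k$-dependence of $\mathcal{A}_1,\mathcal{A}_2$ enters only through the scalar $\langle k,\omega\rangle$ and the diagonal blocks $L_{k1}$, $L_{k2}$.

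The one elementary fact I would extract from $(D)$ is this: since $\{\partial_\lambda^\alpha\omega\}_{|\alpha|\le N}$ spans $\mathbb{R}^n$, the functional $\langle k,\cdot\rangle$ with $k\ne 0$ cannot annihilate all of these vectors, so $\partial_\lambda^\alpha L_{k0}=\sqrt{-1}\,\partial_\lambda^\alpha\langle k,\omega\rangle\ne 0$ for some $|\alpha|\le N$. Because the first $n$ columns of $\mathcal{A}_1$ are $L_{k0}e_1,\dots,L_{k0}e_n$, their $\lambda$-derivatives of order $\le N$ span the first coordinate block $\mathbb{C}^n\times\{0\}\subset\mathbb{C}^{n+2m}$; likewise the first $n^2$ columns of $\mathcal{A}_2$, which equal $L_{k0}e_1,\dots,L_{k0}e_{n^2}$, span the first coordinate block of $\mathbb{C}^{(n+2m)^2}$ (dimension $n^2$).

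To get $(M1)$ I would use that $\mathcal{A}_1$ is block upper triangular with diagonal blocks $L_{k0}I_n$ and $L_{k1}$: by the previous paragraph the span $V$ of all order-$\le N$ $\lambda$-derivatives of columns of $\mathcal{A}_1$ contains $\mathbb{C}^n\times\{0\}$, and passing to the quotient $\mathbb{C}^{n+2m}/(\mathbb{C}^n\times\{0\})\cong\mathbb{C}^{2m}$ the last $2m$ columns and their derivatives project onto the vectors $\partial_\lambda^\alpha L_{k1}^{(j)}$ ($1\le j\le 2m$), which span $\mathbb{C}^{2m}$ by $(M1')$; hence $V=\mathbb{C}^{n+2m}$, i.e.\ $(M1)$. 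To get $(M2)$ I would argue in the same spirit with the three diagonal blocks $L_{k0}I_{n^2}=I_n\otimes(L_{k0}I_n)$, $I_n\otimes L_{k1}$, $L_{k2}$ of sizes $n^2$, $2mn$, $4m^2$ (summing to $(n+2m)^2$): the span $W$ contains the first block; modulo it, the middle column block contributes the columns of $I_n\otimes L_{k1}$, i.e.\ the vectors $e_a\otimes L_{k1}^{(j)}$, and since $\partial_\lambda^\alpha(e_a\otimes L_{k1}^{(j)})=e_a\otimes\partial_\lambda^\alpha L_{k1}^{(j)}$ with $\{\partial_\lambda^\alpha L_{k1}^{(j)}\}$ spanning $\mathbb{C}^{2m}$ by $(M1')$, these span $\bigoplus_{a=1}^{n}(e_a\otimes\mathbb{C}^{2m})=\mathbb{C}^{2mn}$, so $W$ contains the first two blocks; finally, modulo those, the last column block contributes the columns of $L_{k2}$, whose order-$\le N$ $\lambda$-derivatives span $\mathbb{C}^{4m^2}$ by $(M2')$, whence $W=\mathbb{C}^{(n+2m)^2}$, i.e.\ $(M2)$.

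The step I expect to need the most care is precisely the triangular elimination just described: one must check that at each stage the strictly upper off-diagonal blocks $-M_{21}^TJ$ and $-(M_{21}^TJ)\otimes I_{2m}$ — which may themselves depend on $\lambda$ — contribute nothing, because the coordinate blocks into which they map have already been absorbed into $V$ (respectively $W$). This is immediate from the block shape but should be written out carefully, together with the remark that the common $N$ can be taken uniform in $k$. Everything else — the Cantor sets $\mathcal{O}_\varepsilon$, $\Xi_\varepsilon$, the measure estimates, and the preservation of the frequency or its ratio — is exactly the content of Theorem~\ref{TH1}, so no additional work is needed.
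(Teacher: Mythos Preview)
Your proposal is correct and follows essentially the same approach as the paper: both exploit the block upper-triangular structure of $\mathcal{A}_1$ and $\mathcal{A}_2$, using $(D)$ for the $L_{k0}$ diagonal block, $(M1')$ for the $L_{k1}$ block, and $(M2')$ for the $L_{k2}$ block, to verify $(M1)$ and $(M2)$ and then invoke Theorem~\ref{TH1}. The paper carries out the triangular elimination via explicit column-permutation and block-elimination matrices $Q_1,\dots,Q_6$ (and $\tilde Q_1,\dots,\tilde Q_5$ for $\mathcal{A}_2$), whereas you phrase the same step as a quotient-space argument, but the mathematical content is identical.
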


\begin{remark}\label{equi}
When $MJ$ is diagonalizable, which is obvious if the geometric multiplicity of some eigenvalue of $MJ$ is strictly less than its algebraic, $L_{k1}$ and $L_{k2}$ are equivalent to $
L_{k1} = \sqrt{-1} \langle k, \omega\rangle I_{2m} - E,$ and $L_{k2} =  \sqrt{-1} \langle k, \omega\rangle I_{4m^2} - E \otimes I_{2m} - I_{2m} \otimes E,$
where $E = S^{-1} MJ S$ is a diagonal matrix, $S$ is a nonsingular matrix. Let $\hat{\lambda}_i$, $1\leq i \leq 2m$, be the eigenvalue of $MJ$. Therefore, conditions ${(M1')}$ and ${(M2')}$ are equivalent to
\begin{itemize}
 \item [${(M1'')}$] There is a finite positive integer $N$ such that for $\lambda\in \mathcal{O}$,
 $\partial_\lambda^l \tilde{L}_{k1} \neq 0~ for ~some~ l,$
 where $ \tilde{L}_{k1}=\sqrt{-1} \langle k, \omega\rangle - \hat{\lambda}_i$, $1\leq i\leq 2m$, $1\leq |l| \leq N$;
 \item [${(M2'')}$] There is a finite positive integer $N$ such that for $\lambda\in \mathcal{O}$,
 $\partial_\lambda^l \tilde{L}_{k2} \neq 0~ for ~some~ l,$
 where $ \tilde{L}_{k2}=\sqrt{-1} \langle k, \omega\rangle - \hat{\lambda}_i - \hat{\lambda}_j$, $1\leq i,j\leq 2m$, $1\leq |l| \leq N$.
\end{itemize}
\end{remark}

 Next, we will state a quasi-periodic Poincar\'{e} Theorem for multi-scale Hamiltonian systems. Consider a resonant multi-scale Hamiltonian systems
\begin{eqnarray}\label{Eq63}
H(x,y)= \sum\limits_{\iota =0}^m \varepsilon_{\iota} H_{\iota}(y) + \varepsilon^2 P(x,y),
\end{eqnarray}
where  $(x, y) \in T^d \times G \subset T^d \times R^d$, $G$ is a bounded closed region, $0<\varepsilon\leq \varepsilon_i\ll1$, $0\leq i\leq m$.

We classify the resonant frequency first. The resonant frequency $\omega$ is $m_0$-resonant if there is  rank $m_0$ subgroup $g$ of $Z^d$ generated by $\tau_1,$ $\cdots,$ $\tau_{m_0}$ such that $\langle k, \omega\rangle = 0$ for all $k \in g$ and $\langle k, \omega\rangle \neq 0$ for all $k \in Z^d/ g$. Introduce the $g$-resonant manifold $O(g, G) = \{y\in G: \langle k, \omega(y)\rangle= 0, k\in g\},$
which is an $n = d - m_0$ dimensional surface. According to group theory, there are integer vectors $\tau_1'$, $\cdots,$ $\tau_n'$ $\in$ $Z^d$ such that $Z^d$ is generated by $\tau_1$, $\cdots,$ $\tau_{m_0}$, $\tau_1'$, $\cdots,$ $\tau_n'$ and $\det K_0 = 1$, where $K_0 = (K_*, K')$, $K_* = (\tau_1', \cdots, \tau_n'),$ $K' = (\tau_1, \cdots, \tau_{m_0})$ are $d\times d$, $d\times n$, $d \times m_0$, respectively (\cite{Cong,Treshchev}).

Denote $\mathcal{H}_1= \sum\limits_{\iota =0}^m \varepsilon_{\iota} H_{\iota}(y)$ and
\begin{eqnarray*}
\breve{M} = \left(
        \begin{array}{cc}
          K_0^T \partial_y^2 \mathcal{H}_1 K_0 & 0 \\
          0 & \varepsilon \partial_{x_2}^2 \int_{T^n}P (x,0) d x_1  \\
        \end{array}
      \right)=\left(
      \begin{array}{cc}
        \breve{M}_{11} & \breve{M}_{12} \\
        \breve{M}_{21} & \breve{M}_{22} \\
      \end{array}
    \right),
\end{eqnarray*}
where $x_1 = K_*^T x,$ $x_2 = (K')^T x,$ the dimensions of $\breve{M}$, $\breve{M}_{11}$, $\breve{M}_{12}$, $\breve{M}_{21}$, $\breve{M}_{22}$ are $(n + 2m_0)\times (n + 2m_0)$, $n \times n$, $n\times 2m_0$, $2m_0 \times n$, $2m_0 \times 2m_0$, respectively. Let
\begin{eqnarray*}
\breve{\mathcal{A}}_1 &=& \left(
     \begin{array}{cc}
       \breve{L}_{k0} I_n & -\breve{M}_{21}^TJ \\
       0 & \breve{L}_{k1}  \\
     \end{array}
   \right),\\
\breve{\mathcal{A}}_2 &=&
  \left(
    \begin{array}{ccc}
      I_n\otimes \breve{L}_{k0}I_n & I_n \otimes (\breve{M}_{21}^T J) & 0 \\
      0 & I_n \otimes \breve{L}_{k1} & - (\breve{M}_{21}^T J)\otimes I_{2m_0} \\
      0 & 0 & \breve{L}_{k2} \\
    \end{array}
  \right),
\end{eqnarray*}
where $\breve{L}_{k0}=\sqrt{-1} \langle k, \omega_*\rangle,$ $\breve{L}_{k1}=\sqrt{-1} \langle k, \omega_*\rangle I_{2m_0} - \breve{M}_{22}J,$
 $\breve{L}_{k2}=\sqrt{-1} \langle k, \omega_*\rangle I_{4m_0^2} - (\breve{M}_{22}J) \otimes I_{2m_0} - I_{2m_0}\otimes(\breve{M}_{22}J),$ $\omega_* = K_*^T \partial_y \mathcal{H}_1 = \varepsilon_0 \omega_0^*+ \cdots+\varepsilon_m \omega_m^*.$

For convenience to state our results on quasi-periodic Poincar\'{e} Theorem, we make the following assumptions:

\begin{enumerate}
   \item [{(S1)}] There is a finite positive integer $N$ such that $
rank \{\partial_\lambda^\alpha \omega_*: 1\leq|\alpha| \leq N\}= n,$
where $\omega_* = K_*^T \partial_y \mathcal{H}_1 = \varepsilon_0 \omega_0^*+ \cdots+\varepsilon_m \omega_m^*$;
   \item [{(S2)}] For some positive constant $\tilde \sigma$ (independent of $\varepsilon$), $|\det \partial_{x_2}^2 \int_{T^n}P (x,0) d x_1| > \tilde \sigma,$ where $x_1 = K_*^T x$, $x_2 = (K')^T x$;
   \item [{(S3)}] $\breve{M}^T \breve{M} \geq \varepsilon^2 I_{n+ 2m_0}$;
\item [{(S4)}] $\left(
            \begin{array}{cc}
              \check{M} & \breve{\omega} \\
              \breve{\omega}^T & 0 \\
            \end{array}
          \right)^T \left(
            \begin{array}{cc}
              \breve{M} & \breve{\omega} \\
              \breve{\omega}^T & 0 \\
            \end{array}
          \right) \geq \varepsilon^2 I_{n+ 2m_0+1},$ where $\breve{\omega} = (\omega_*, 0)\in R^{n + 2m_0}$;
\item[{(S5)}]There is a finite positive integer $N$ such that for $\lambda\in \mathcal{O}$,$rank \{c_i: 1\leq i\leq n+2m_0\} = n+2m_0,$
where $c_i$ is a column of $\{\partial_\lambda^\alpha \breve{\mathcal{A}}_1^i: 1\leq |\alpha|\leq N \}$, $\breve{\mathcal{A}}_1^i$ is the $i$-th column of $\breve{\mathcal{A}}_1$;
\item[{(S6)}]There is a finite positive integer $N$ such that for $\lambda\in \mathcal{O}$, $rank \{\bar{c}_i: 1\leq i\leq (n+2m_0)^2\} = (n+2m_0)^2,$
where  $\bar{c}_i$ is a column of $\{\partial_\lambda^\alpha \breve{\mathcal{A}}_2^i: 1\leq |\alpha|\leq N \}$, $\breve{\mathcal{A}}_2^i$ is the $i$-th column of $\breve{\mathcal{A}}_2$.
 \end{enumerate}

\begin{theorem}\label{resonant}
Let resonant multi-scale Hamiltonian systems $(\ref{Eq63})$ be real analytic on the complex neighborhood of $T^d \times G$.
\begin{itemize}
  \item [(i)]If ${(S1)}$, ${(S2)}$, ${(S3)}$, ${(S5)}$ and ${(S6)}$ hold, then there exist a $\varepsilon_0 >0 $ and a family of Cantor sets $O_\varepsilon(g, G) \subset O (g, G)$, $0<\varepsilon < \varepsilon_0$, such that for each $y \in O_\varepsilon(g, G)$, system $(\ref{Eq63})$ admits $2^{m_0}$ families of invariant tori, possessing  hyperbolic, elliptic or mixed types, associated to nondegenerate relative equilibria. All such perturbed tori corresponding to a same $y \in O_\varepsilon(g,G) $ are symplectically conjugated to the standard quasi-periodic $n$-tori $T^n$ with the Diophantine frequency vector $ {\omega}_*$. Moreover, the relative Lebesgue measure $|O(g,G) \setminus O_{\varepsilon}(g,G)|$ tends to 0 as $\varepsilon \rightarrow 0$.
  \item [(ii)]If ${(S1)}$, ${(S2)}$, ${(S4)}$, ${(S5)}$ and ${(S6)}$ hold on a given energy surface $\Xi = \{ y: \mathcal{H}_1(y) = c\}$, then there exist a $\varepsilon_0 >0 $ and a family of Cantor sets $\Xi_\varepsilon \subset \Xi$, $0<\varepsilon < \varepsilon_0$, for each $y \in \Xi_\varepsilon$,  system $(\ref{Eq63})$ admits $2^{m_0}$ families of invariant tori, possessing  hyperbolic, elliptic or mixed types, associated to nondegenerate relative equilibria. Moreover, the components of the frequency on the unperturbed tori $\omega_*$ and perturbed tori $\bar{\omega}$ satisfy $\bar{\omega} =t \omega_*$, where $t$ is a constant. Moreover, the relative Lebesgue measure $|\Xi \setminus \Xi_{\varepsilon}|$ tends to 0 as $\varepsilon \rightarrow 0$.
\end{itemize}

\end{theorem}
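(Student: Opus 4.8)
The plan is to localize the resonant Hamiltonian $(\ref{Eq63})$ near the resonant manifold $O(g,G)$, reduce it by a symplectic change of coordinates to a perturbed lower–dimensional normal form of the type $(\ref{701})$, and then invoke Theorem~\ref{TH1}; the factor $2^{m_0}$ will come from counting critical points of the averaged perturbation on the slow torus $T^{m_0}$. Since $\det K_0 = 1$, the map $x\mapsto (x_1,x_2)=(K_*^Tx,(K')^Tx)$ is a unimodular automorphism of $T^d$, and extending it to the conjugate actions gives a symplectomorphism of $T^d\times G$. Under it the frequency $\partial_y\mathcal{H}_1$ becomes $(\omega_*,(K')^T\partial_y\mathcal{H}_1)$; on $O(g,G)$ the last $m_0$ components vanish by the definition of $g$–resonance, while $\omega_*=K_*^T\partial_y\mathcal{H}_1$ is Diophantine along the $n$–dimensional surface. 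Thus $x_1\in T^n$ is a fast angle, $x_2\in T^{m_0}$ a slow one, and the pair $(x_2,y_2)$ (with $y_2$ conjugate to $x_2$) will become the normal variable $z\in R^{2m_0}$, so that $m=m_0$.

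Next, fixing a base point $y\in O(g,G)$, I would Taylor–expand $\mathcal{H}_1$ to second order in the actions and average the $O(\varepsilon^2)$ perturbation over the fast angle $x_1$: since $\omega_*$ is non–resonant, the non–averaged part of $\varepsilon^2 P$ is killed by one symplectic transformation and absorbed into a higher–order remainder, and the surviving leading term is $\int_{T^n}P(x,0)\,dx_1$, a function on $T^{m_0}$ whose critical points $x_2^\ast$ are the relative equilibria. Translating $x_2^\ast$ to the origin, collecting $(x_2-x_2^\ast,y_2)$ into $z$, and rescaling suitably, the reduced system takes exactly the shape $(\ref{701})$: linear part $\langle\omega_*,y\rangle$; quadratic part $\tfrac12\langle(y,z),\breve{M}(y,z)\rangle$ with the block structure recorded just before the statement — the action–Hessian block $K_0^T\partial_y^2\mathcal{H}_1K_0$ carrying the scales $\varepsilon_0,\dots,\varepsilon_m$ and the slow–angle block $\varepsilon\,\partial_{x_2}^2\int_{T^n}P(x,0)\,dx_1$ carrying the scale $\varepsilon$; the cubic–and–higher part of $\mathcal{H}_1$ absorbed into $h$; and a fresh perturbation of order $\varepsilon^2$. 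Here the parameter $\lambda$ is the base point $y$ running over the $n$–dimensional manifold $O(g,G)$, and $\{\mu_j\}\subset\{\varepsilon,\varepsilon_0,\dots,\varepsilon_m\}$ with $\min_j\mu_j=\varepsilon$.

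Condition $(S2)$ makes $\int_{T^n}P(x,0)\,dx_1$ a Morse function on $T^{m_0}$ with Hessian determinant bounded away from $0$ at its critical points, so by Morse theory on the torus it has at least $\sum_{j=0}^{m_0}\binom{m_0}{j}=2^{m_0}$ nondegenerate critical points, each a nondegenerate relative equilibrium whose type (hyperbolic, elliptic, or mixed) is read off from the signature of the Hessian together with the symplectic structure. For each such equilibrium the reduced system meets the hypotheses of Theorem~\ref{TH1}: $(S1)$ implies $(D)$ for $\omega_*$; $(S5)$, $(S6)$ yield $(M1)$, $(M2)$ for $\breve{\mathcal{A}}_1$, $\breve{\mathcal{A}}_2$; $(S3)$ is $(C1)$ with $\min_j\mu_j=\varepsilon$, and $(S4)$ is $(C2)$ with $\breve{\omega}=(\omega_*,0)$; $(S2)$ also supplies the nonsingularity of $\breve{M}$ used in the Melnikov normal–form step. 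Theorem~\ref{TH1}(i) then produces, for $y$ in a Cantor set, a perturbed $n$–torus symplectically conjugate to $T^n$ with frequency $\omega_*$; intersecting the finitely many Cantor sets attached to the $\ge 2^{m_0}$ equilibria gives $O_\varepsilon(g,G)$ with $|O(g,G)\setminus O_\varepsilon(g,G)|\to 0$ as $\varepsilon\to 0$. Part (ii) is the same argument, applying Theorem~\ref{TH1}(ii) on the energy surface $\{y:\mathcal{H}_1(y)=c\}$ (which matches the energy–surface condition of Theorem~\ref{TH1}(ii)) and giving $\bar\omega=t\,\omega_*$.

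The main obstacle I anticipate is the reduction/averaging bookkeeping: one must check that a single averaging step leaves a remainder genuinely of order $\varepsilon^2$ in the reduced coordinates, and — more delicately — that the quadratic form produced by the reduction really has \emph{exactly} the multi–scale block structure of $\breve{M}$ (in particular that the slow–angle block enters only at scale $\varepsilon$ and that the coupling blocks do not spoil $(C1)$ or $(C2)$), so that Theorem~\ref{TH1} applies without modification. Carrying this out uniformly in the base point $y\in O(g,G)$, so that the Cantor construction is uniform in the parameter $\lambda$, is the second technical point; everything else is routine KAM bookkeeping relegated to Theorem~\ref{TH1}.
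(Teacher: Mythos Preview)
Your proposal is correct and follows essentially the same route as the paper: unimodular change $q=K_0^Tx$, $y-y_0=K_0p$; Taylor expansion at a base point on $O(g,G)$; one averaging step via a generating function to extract $\int_{T^n}P\,dx_1$; localization at a nondegenerate critical point of this average on $T^{m_0}$; a rescaling to reach the normal form $(\ref{701})$ with $M=\breve M$; and then Theorem~\ref{TH1}, with Morse theory on $T^{m_0}$ supplying the $2^{m_0}$ count. The one point worth flagging is the rescaling: the paper uses $Y\mapsto\varepsilon^{1/2}Y$, $H\mapsto\varepsilon^{-1/2}H$, after which the new perturbation is only $O(\varepsilon^{3/2})$, not $O(\varepsilon^2)$ (see $(\ref{Eq62})$), so Theorem~\ref{TH1} is invoked through Remark~\ref{remark1} rather than directly --- precisely the ``reduction/averaging bookkeeping'' obstacle you anticipated.
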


\begin{remark}
{{Theorem~\ref{resonant}}} shows that the quasi-ergodic hypothesis is false for $g$-resonant multi-scale Hamiltonian systems.
\end{remark}
\begin{remark}
According to condition ${(S2)}$ and Morse Theory \emph{(\cite{Milnor})}, $\int_{T^n}P (x,0) d x_1$ has at least $2^{m_0}$ critical points, of which the number is equal to the number of surviving resonant torus.
\end{remark}

We make the following assumptions:
\begin{itemize}
  \item [$(S5')$] There is a finite positive integer $N$ such that for $\lambda\in \mathcal{O}$,
$rank \{c_i: 1\leq i\leq 2m_0\}= 2m_0,$
where $c_i$ is a column of $\{\partial_\lambda^\alpha \breve{L}_{k1}^i: 1\leq |\alpha|\leq N \}$, $\breve{L}_{k1}^i$ is the $i$-th column of $\breve{L}_{k1}$, ;
  \item [$(S6')$] There is a finite positive integer $N$ such that for $\lambda\in \mathcal{O}$, $rank \{\bar{c}_i: 1\leq i\leq 4m_0^2 \} = 4m_0^2,$
where  $\bar{c}_i$ is a column of $\{\partial_\lambda^\alpha \breve{L}_{k2}^i: 1\leq |\alpha|\leq N \}$, $\breve{L}_{k2}^i$ is the $i$-th column of $\breve{L}_{k2}$.
\end{itemize}

Then, according to {Theorem \ref{resonant}}, we have the following corollary.
\begin{corollary}\label{resonant1}
Let Hamiltonian systems $(\ref{Eq63})$ be real analytic on the complex neighborhood of $T^d \times G$.
\begin{itemize}
  \item [(i)]If ${(S1)}$, ${(S2)}$, ${(S3)}$, ${(S5')}$ and ${(S6')}$ hold, then there exist a $\varepsilon_0 >0 $ and a family of Cantor sets $O_\varepsilon(g, G) \subset O (g, G)$, $0<\varepsilon < \varepsilon_0$, such that for each $y \in O_\varepsilon(g, G)$, system $(\ref{Eq63})$ admits $2^{m_0}$ families of invariant tori, possessing  hyperbolic, elliptic or mixed types, associated to nondegenerate relative equilibria. All such perturbed tori corresponding to a same $y \in O_\varepsilon(g,G) $ are symplectically conjugated to the standard quasi-periodic $n$-tori $T^n$ with the Diophantine frequency vector $ {\omega}_*$. Moreover, the relative Lebesgue measure $|O(g,G) \setminus O_{\varepsilon}(g,G)|$ tends to 0 as $\varepsilon \rightarrow 0$.
  \item [{(ii)}]If ${(S1)}$, ${(S2)}$, ${(S4)}$, ${(S5')}$ and ${(S6')}$ hold on a given energy surface $\Xi = \{ y: \mathcal{H}_1(y) = c\}$, then there exist a $\varepsilon_0 >0 $ and a family of Cantor sets $\Xi_\varepsilon \subset \Xi$, $0<\varepsilon < \varepsilon_0$, for each $y \in \Xi_\varepsilon$,  system $(\ref{Eq63})$ admits $2^{m_0}$ families of invariant tori, possessing  hyperbolic, elliptic or mixed types, associated to nondegenerate relative equilibria. Moreover, the components of the frequency on the unperturbed tori $\omega_*$ and perturbed tori $\bar{\omega}$ satisfy $\bar{\omega} =t \omega_*$, where $t$ is a constant. Moreover, the relative Lebesgue measure $|\Xi \setminus \Xi_{\varepsilon}|$ tends to 0 as $\varepsilon \rightarrow 0$.
\end{itemize}

\end{corollary}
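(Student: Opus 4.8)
The plan is to reduce the hypotheses of Corollary~\ref{resonant1} to those of Theorem~\ref{resonant}: I keep (S1), (S2), (S3) (resp.\ (S4)) unchanged and show that (S1) together with (S5$'$) and (S6$'$) forces (S5) and (S6), whereupon parts (i) and (ii) follow verbatim from the corresponding parts of Theorem~\ref{resonant}. The argument is entirely parallel to the one needed for Corollary~\ref{cor.} and is purely linear-algebraic: $\breve{\mathcal{A}}_1$ and $\breve{\mathcal{A}}_2$ are block upper triangular, with diagonal blocks $\breve{L}_{k0}I_n,\ \breve{L}_{k1}$ in the first case and $\breve{L}_{k0}I_{n^2}=I_n\otimes\breve{L}_{k0}I_n,\ I_n\otimes\breve{L}_{k1},\ \breve{L}_{k2}$ in the second, and a column of the $\lambda$-derivative family of the whole matrix, projected onto the lowest diagonal block it meets, reproduces a column of the derivative family of that block.

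First I would record the content of (S1) for the scalar block. Since $\partial_\lambda^\alpha\breve{L}_{k0}=\sqrt{-1}\langle k,\partial_\lambda^\alpha\omega_*\rangle$, the vanishing of all $\partial_\lambda^\alpha\breve{L}_{k0}$ with $1\le|\alpha|\le N$ would make $k$ orthogonal to ${\rm span}\{\partial_\lambda^\alpha\omega_*:1\le|\alpha|\le N\}=R^n$, i.e.\ $k=0$; hence for every $k\in Z^n\setminus\{0\}$ there is $\alpha$ with $1\le|\alpha|\le N$ and $\partial_\lambda^\alpha\breve{L}_{k0}\neq0$. For $\breve{\mathcal{A}}_1$: the derivatives of its first $n$ columns then contain $n$ vectors $(c\,e_i,0)^{T}$ with $c\neq0$, spanning $R^n\times\{0\}$, while (S5$'$) provides $2m_0$ derivatives of the remaining columns whose projections onto the second block $R^{2m_0}$ are linearly independent; by triangularity these $n+2m_0$ columns are independent, which is (S5). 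For $\breve{\mathcal{A}}_2$ the same bookkeeping runs through three blocks: the nonvanishing of $\partial_\lambda^\alpha\breve{L}_{k0}$ yields $n^2$ independent columns spanning $R^{n^2}\times\{0\}\times\{0\}$; since $I_n\otimes(\cdot)$ carries a family of columns spanning $R^{2m_0}$ to one spanning $R^{2m_0 n}$, condition (S5$'$) supplies $2m_0 n$ further columns with independent projections onto the middle block; and (S6$'$) supplies $4m_0^2$ columns with independent projections onto $R^{4m_0^2}$. These $(n+2m_0)^2$ columns are independent, so (S6) holds.

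With (S5) and (S6) in hand, Theorem~\ref{resonant}(i) gives, under (S1), (S2), (S3), (S5$'$), (S6$'$), the Cantor family $O_\varepsilon(g,G)\subset O(g,G)$ together with the $2^{m_0}$ families of hyperbolic/elliptic/mixed invariant tori attached to nondegenerate relative equilibria, symplectically conjugate to the standard Diophantine $n$-tori with frequency $\omega_*$, and $|O(g,G)\setminus O_\varepsilon(g,G)|\to0$; and Theorem~\ref{resonant}(ii) gives, under (S1), (S2), (S4), (S5$'$), (S6$'$) on the energy surface $\Xi=\{y:\mathcal{H}_1(y)=c\}$, the analogous Cantor family $\Xi_\varepsilon$ with $2^{m_0}$ families of perturbed tori on which $\bar\omega=t\omega_*$ and $|\Xi\setminus\Xi_\varepsilon|\to0$. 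This proves both parts.

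I do not anticipate a genuine obstacle here: the only care needed is that the conditions concern the extraction of independent columns from an entire family of $\lambda$-derivatives rather than from a single matrix, so the reduction must track how the projections onto successive diagonal blocks accumulate; and the integers $N$ appearing in (S1), (S5$'$), (S6$'$) should be replaced by their maximum so that one $N$ serves in (S5) and (S6). The identity $I_n\otimes\breve{L}_{k0}I_n=\breve{L}_{k0}I_{n^2}$ and the stability of the spanning property under $I_n\otimes(\cdot)$ are the only computations, and both are immediate.
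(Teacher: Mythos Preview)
Your proposal is correct and follows exactly the approach the paper intends: the paper does not write out a separate proof of Corollary~\ref{resonant1} but states it as an immediate consequence of Theorem~\ref{resonant}, the implicit reduction being precisely the block-triangular argument you give, identical in structure to the proof of Corollary~\ref{cor.} in Section~\ref{POC}. Your observation that (S1) forces $\partial_\lambda^\alpha\breve{L}_{k0}\neq0$ for some $\alpha$ when $k\neq0$, together with (S5$'$), (S6$'$) on the lower diagonal blocks and the stability of spanning under $I_n\otimes(\cdot)$, reproduces that argument faithfully; the remark about replacing the various $N$'s by their maximum is also the right bookkeeping point.
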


\begin{remark}
Let $\check{\lambda}_i$, $1\leq i\leq 2m$, be eigenvalue of $\breve{M}_{22}J$. Conditions ${(S5')}$ and ${(S6')}$ are equivalent to
\begin{itemize}
 \item [(S5$''$)] There is a finite positive integer $N$ such that for $\lambda\in \mathcal{O}$,
 $\partial_\lambda^l \check{{L}}_{k1} \neq 0~ for ~some~ l,$
 where $ \check{{L}}_{k1}=\sqrt{-1} \langle k, \omega_*\rangle - \check{{\lambda}}_i$, $1\leq i\leq 2m$, $1\leq |l| \leq N$;
 \item [(S6$''$)] There is a finite positive integer $N$ such that for $\lambda\in \mathcal{O}$,
 $\partial_\lambda^l \check{{L}}_{k2} \neq 0~ for ~some~ l,$
 where $ \check{{L}}_{k2}=\sqrt{-1} \langle k, \omega_*\rangle - \hat{\lambda}_i - \check{{\lambda}}_j$, $1\leq i,j\leq 2m$, $1\leq |l| \leq N$.
\end{itemize}
\end{remark}

\subsection{Organization of The Article}
The paper is organized as follows.  In Section \ref{038}, we give KAM steps for the multi-scale Hamiltonian systems with high order perturbation. The solvability of homological equations is extremely complicated, due to the general normal form and multiple scales. The most direct difficulty is about the estimate on the inverse of multi-scale matrix. Combining the properties of Hermitian matrices and Weyl Theorem, we introduce nonresonant set $\mathcal{O}_+$ and prove that homological equations are solvable on $\mathcal{O}_+$. Next, we show the nonresonant set after infinite KAM steps is non-null under nondegenerate conditions ${(D)}$, ${(M1)}$ and ${(M2)}$. In Section \ref{041}, we show a procedure to get the Hamiltonian system (\ref{Abs1}) in Section $\ref{038}$, as a Hamiltonian system with high order perturbation, form the initial Hamiltonian system (\ref{701}). In Section \ref{Resonant}, we study the resonant multi-scale Hamiltonian on the resonant manifold and using Theorem \ref{TH1} prove the persistence of resonant tori. In Section \ref{POC}, we give the proof of {Corollary \ref{cor.}} using Theorem \ref{TH1}. Finally, in Section \ref{EXAM}, we give some interesting examples, including the persistence of resonant tori for properly degenerate multi-scale Hamiltonians, the persistence of resonant tori for weakly anharmonic systems without Melnikov's condition and an artificial example.

\section{KAM Steps}\setcounter{equation}{0}\label{038}

Throughout the paper, unless specified explanation, we shall use the same symbol $|\cdot|$ to denote an equivalent (finite dimensional) vector norm and its induced matrix norm, absolute value of functions, and measure of sets, etc., and denote by $|\cdot|_D$ the supremum norm of functions on a domain $D$. Also, for any two complex column vectors $\xi, \zeta$ of the same dimension, $\langle \xi ,\zeta \rangle$ always means $\xi^T \zeta$, i.e. the transpose of $\xi$ times $\zeta$. For the sake of brevity, we shall not specify smoothness orders for functions having obvious orders of smoothness indicated by their derivatives taking. Moreover, all Hamiltonian functions in the sequel will be associated to the standard symplectic structure.

Consider an abstract parameterized Hamiltonian system:
\begin{eqnarray}\label{Abs1}
 H(x,y,z,\lambda)  = e+ \langle \omega , y\rangle + \frac{1}{2}\langle \left(
                                                                              \begin{array}{c}
                                                                                y \\
                                                                                z \\
                                                                              \end{array}
                                                                            \right),
          M  \left(
                                                                              \begin{array}{c}
                                                                                y \\
                                                                                z \\
                                                                              \end{array}
                                                                            \right)\rangle~  + h + \varepsilon P(x,y,z,\lambda),~~
\end{eqnarray}
where  $e$, $\omega$, $M$ and $h$ are defined as those in (\ref{701}), $(x,y,z) \in D(r,s) = \{(x,y,z): |Im x|< r, |y|<s, |z|<s\}$, $\lambda\in\mathcal{O}= \{\lambda: |\lambda|\leq \delta_1, \delta_1~is~a~given~constant\} \subset R^n$, $0<\varepsilon \leq \varepsilon_i, \mu_j \ll 1$, $0\leq i\leq m$, $0\leq j\leq l$. Moreover, $
|P(x,y,z,\lambda)|_{D(r,s) \times \mathcal{O}} \leq \gamma^{3b} s^2 \mu.$
Denote $\bar{\mathcal{O}}= \{\lambda: |\lambda|\leq \delta_1 - \eta\}.$ Then, by Cauchy inequality, $|\partial_\lambda^l P(x,y,z,\lambda)|_{D(r,s) \times \bar{\mathcal{O}}} \leq \frac{\gamma^{3b} s^2 \mu}{\eta^{l_0}},$ where $l_0$ is a given constant satisfying $|l|\leq l_0$.

For the sake of induction, let $r_0 = r,$ $s_0 = s,$ $\gamma_0 = \gamma,$ $\eta_0= \eta,$ $\mu_0 = \mu,$ $\mathcal{O}_0 = \mathcal{O},$ $P_0 = P,$
 where $0 <r, s, \gamma_0, \eta_0\leq 1$. Obviously, for all $l \in Z_+^n $, $|l| \leq l_0,$ $|\partial_\lambda^l P_0|_{{D(r_0, s_0) \times \bar{\mathcal{O}}_0}} \leq \frac{\gamma_0^{3b} s_0^{2} \mu_0}{\eta^{l_0}}.$

As an induction hypothesis, assume that, after $\nu$ steps, we have arrived at
\begin{eqnarray}\label{705}
H_\nu &=& N_\nu(x,y,z,\lambda) + \varepsilon P_\nu(x,y,z,\lambda),\\
\nonumber N_\nu &=& e_\nu(\lambda)+ \langle \omega_\nu(\lambda), y\rangle + \frac{1}{2} \langle \left(
                                                                                              \begin{array}{c}
                                                                                                y \\
                                                                                                z \\
                                                                                              \end{array}
                                                                                            \right), M_\nu(\lambda)\left(
                                                                                                        \begin{array}{c}
                                                                                                          y \\
                                                                                                          z \\
                                                                                                        \end{array}
                                                                                                      \right)
\rangle+ h_\nu(y,z,\lambda).~~~~
\end{eqnarray}
Moreover, for all $l \in Z_+^n $, $|l| \leq l_0$, $|\partial_\lambda^l  P_\nu|_{{D(r_{\nu}, s_{\nu}) \times \bar{\mathcal{O}}_{\nu}}} \leq  \frac{\gamma_\nu^{3b} s_\nu^{2} \mu_\nu}{\eta_\nu^{l_0}}.$

Next, we need to find a symplectic transformation, which will transform $(\ref{705})$, on a smaller domain $D(r_{\nu+1}, s_{\nu+1}) \times {\mathcal{O}}_{\nu+1}$, to a new Hamiltonian
\begin{eqnarray*}
H_{\nu+1} &=& N_{\nu+1} +\varepsilon P_{\nu+1},\\
N_{\nu+1} &=&  e_{\nu+1} + \langle\omega_{\nu+1}, y\rangle + \frac{1}{2} \langle \left(
                                                                       \begin{array}{c}
                                                                         y \\
                                                                         z \\
                                                                       \end{array}
                                                                     \right),
M_{\nu+1} \left(
            \begin{array}{c}
              y \\
              z \\
            \end{array}
          \right)
 \rangle + h_{\nu+1}(y,z),
\end{eqnarray*}
and moreover, for all $l \in Z_+^n $,  $|l| \leq l_0$,$
|\partial_\lambda^l  P_{\nu+1}|_{{D(r_{\nu+1}, s_{\nu+1}) \times \bar{\mathcal{O}}_{\nu+1}}} \leq \frac{\gamma_{\nu+1}^{3b} s_{\nu+1}^{2} \mu_{\nu+1}}{\eta_{\nu+1}^{l_0}}$.

For simplicity, in the rest of this section, we omit the index for all quantities at the $\nu$-th step and use `+' to index all quantities at the $(\nu+1)$-th step. Bellow, all constants are positive and independent of the iteration process. We shall also use the same symbol $c$ to denote any intermediate positive constant which is independent of the iteration process.

To process this KAM step, from $\nu$ to $\nu+1$, we need the following iteration relations: $
s_+ = \frac{1}{8} \alpha s,$ $\mu_+ = (64 c_0)^{\frac{1}{1 - \lambda_0}} \mu^{1 + \sigma},$ $r_+ = r - \frac{r_0}{2^{\nu+1}},$ $\gamma_+ = \gamma - \frac{\gamma_0}{2^{\nu+1}},$ $\eta_+ = \eta - \frac{\eta_0}{2^{\nu+1}},$ $K_+ = ([\log\frac{1}{\mu}]+ 1)^{3{\eta}},$ $\Gamma(r- r_+) = \sum\limits_{0<|k|\leq K_+} |k|^\chi e^{-|k|\frac{r -r_+}{8}},$ $\hat{D}(\lambda) = D(r_++ \frac{7}{8}(r - r_+), \lambda),$ $D(\lambda) = \{(y,z)\in C^n \times C^{2m}:|y|< \lambda,|z|< \lambda\},$ $D_{\frac{i}{8} \alpha} =  D(r_+ + \frac{i -1 }{8}(r - r_+), \frac{i}{8}\alpha s), ~~i = 1,2,\cdots, 8,$ $D_\alpha = D(r_++ \frac{7}{8}(r - r_+), \alpha s),$
where $\alpha = \mu^{\frac{1}{3}},$ $\lambda > 0$, $\chi = (b + 2)\tau +5 l_0 + 10,$ $\sigma = \frac{1}{12}$, $c_0$ only depends on $r_0$, $\beta_0$, and $c_0$ is the maximal among all $c's$ mentioned in this paper.

\subsection{Truncation}\label{Trunc.}
Consider the Taylor-Fourier series \\
$P  = \sum\limits_{\imath\in Z_+^n,\jmath\in Z_+^{2m}, k \in Z^n} p_{k\imath\jmath} y^{\imath} z^{\jmath} e^{\sqrt{-1}\langle k, x\rangle},$
and let $R = \sum\limits_{\substack{\imath+\jmath\leq 2, |k| \leq K_+,\\ \imath\in Z_+^n,\jmath\in Z_+^{2m},  k \in Z^n}} p_{k\imath\jmath} y^{\imath} z^{\jmath} e^{\sqrt{-1}\langle k, x\rangle}$ be the truncation of $P$,
where $K_+$ is defined as above. Standardly, there is  a constant $c$ such that for all $|l| \leq l_0$, $
|\partial_\lambda^l  R|_{{D_\alpha \times \bar{\mathcal{O}}}} \leq c \frac{\gamma^{3b} s^{2} \mu}{\eta^{l_0}}$ and $|\partial_\lambda^l  (P- R)|_{{D_\alpha \times \bar{\mathcal{O}}}} \leq c \frac{\gamma^{3b} s^{2} \mu^{2}}{\eta^{l_0}},$
under the following assumptions
\begin{itemize}
\item[{(H1)}] $K_+ \geq \frac{8(n+ l_0)}{r - r_+}$,
\item[{(H2)}] $\int_{K_+}^\infty \lambda^{n + l_0} e^{-\lambda \frac{r - r_+}{8}} d \lambda \leq \mu.$
\end{itemize}

\subsection{Transformations}\label{Trans.}

Consider the time $1$-map $\phi_F^1$ of the flow generated by a Hamiltonian $F$:
\begin{eqnarray}
\label{709} F &=& \sum\limits_{\imath+\jmath\leq 2, 0< |k| \leq K_+, \imath\in Z_+^n,\jmath\in Z_+^{2m},  k \in Z^n} f_{k\imath\jmath} y^{\imath} z^{\jmath} e^{\sqrt{-1}\langle k, x\rangle}.
\end{eqnarray}
Then Hamiltonian (\ref{705}) arrives at
\begin{eqnarray}
\label{H11}\bar{H}_+=H\circ \phi_F^1 &=&N + \varepsilon R+ \{N, F\} + \bar{P}_+,
\end{eqnarray}
where
\begin{eqnarray}
\label{717} \bar{P}_+&=&  \int_0 ^1 \{R_t,F\}\circ \phi_F^t dt + \varepsilon(P - R )\circ \phi_F^1,\\
\label{Eq425}\{N, F\} &=& \partial_x N \partial_y F - \partial_y N \partial_x F + \partial_z N J \partial_z F,\\
\label{Eq326}R_t &=& (1-t) \{N, F\} +\varepsilon R, \ \  \ J = \left(
        \begin{array}{cc}
          0 & I_{m} \\
          -I_{m} & 0 \\
        \end{array}
      \right).
\end{eqnarray}
Let
\begin{eqnarray}\label{706}
\{N, F\} + \varepsilon (R - [R])= 0,
\end{eqnarray}
where $[R] = \int_{T^n} R(x, \cdot) dx.$
Therefore, (\ref{H11}) is changed to
\begin{eqnarray}
\label{Eq31}\bar{H}_+ =N_+ + \bar{P}_+,
\end{eqnarray}
where $N_+ =\tilde{e} + \langle\tilde{\omega}, y\rangle + \frac{1}{2} \langle \left(
                                                                       \begin{array}{c}
                                                                         y \\
                                                                         z \\
                                                                       \end{array}
                                                                     \right),
\tilde{M} \left(
            \begin{array}{c}
              y \\
              z \\
            \end{array}
          \right)
 \rangle+ h(y,z)+\langle p_{001}, z\rangle,$ $\tilde{e}= e + p_{000},$ $\tilde{\omega}= \omega+ p_{010},$ $\tilde{M} = M +  2\varepsilon\left(
                                                                         \begin{array}{cc}
                                                                           p_{020} & \frac{1}{2} p_{011} \\
                                                                           \frac{1}{2} p_{011}^T & p_{002}\\
                                                                         \end{array}
                                                                       \right).$

Next, find a transformation to remove the first order resonant term $\langle p_{001}, z\rangle$ and the drift of frequency, i.e. $p_{010}$. Consider the following symplectic transformation $
\phi : x \rightarrow x, \left(
                          \begin{array}{c}
                            y \\
                            z \\
                          \end{array}
                        \right) \rightarrow \left(
                                              \begin{array}{c}
                                                y \\
                                                z \\
                                              \end{array}
                                            \right)+ \left(
                                              \begin{array}{c}
                                                y_0 \\
                                                z_0 \\
                                              \end{array}
                                            \right),$
where
\begin{eqnarray}\label{EQ16i}
M \left(
                                              \begin{array}{c}
                                                y_0 \\
                                                z_0 \\
                                              \end{array}
                                            \right) + \partial_{(y,z)} h (y_0, z_0)=
- \left(
                                              \begin{array}{c}
                                                \varepsilon p_{010} \\
                                                \varepsilon p_{001} \\
                                              \end{array}
                                            \right).
\end{eqnarray}
Then Hamiltonian $(\ref{Eq31})$ reaches
\begin{eqnarray*}
H_+ &=& \bar{H}_+ \circ \phi = e_+ + \langle \omega_+, y\rangle + \frac{1}{2} \langle \left(
                                                           \begin{array}{c}
                                                             y \\
                                                             z \\
                                                           \end{array}
                                                         \right), M_+ \left(
                                                                      \begin{array}{c}
                                                                        y \\
                                                                        z \\
                                                                      \end{array}
                                                                    \right)
\rangle+ h_+(y,z) + P_+,
\end{eqnarray*}
where
\begin{eqnarray*}
e_+ &=& e + \langle \omega, y_0\rangle + \frac{1}{2} \langle \left(
                                                               \begin{array}{c}
                                                                 y_0 \\
                                                                 z_0 \\
                                                               \end{array}
                                                             \right), M \left(
                                                               \begin{array}{c}
                                                                 y_0 \\
                                                                 z_0 \\
                                                               \end{array}
                                                             \right)
\rangle +\varepsilon p_{000}+  \langle\varepsilon p_{010}, y_0\rangle  +h(y_0, z_0) \\
 &~&+ \langle \varepsilon p_{001}, z_0\rangle + \frac{1}{2} \langle \left(
                                                               \begin{array}{c}
                                                                 y_0 \\
                                                                 z_0 \\
                                                               \end{array}
                                                             \right), 2\varepsilon\left(
                                                                         \begin{array}{cc}
                                                                           p_{020} & \frac{1}{2} p_{011} \\
                                                                           \frac{1}{2} p_{011}^T & p_{002}\\
                                                                         \end{array}
                                                                       \right)
 \left(
                                                               \begin{array}{c}
                                                                 y_0 \\
                                                                 z_0 \\
                                                               \end{array}
                                                             \right)
\rangle,\\
\omega_+ &=& \omega + \langle \left(
                              \begin{array}{c}
                                y \\
                                z \\
                              \end{array}
                            \right), M \left(
                                         \begin{array}{c}
                                           y_0 \\
                                           z_0 \\
                                         \end{array}
                                       \right)
\rangle+ \langle \left(
                              \begin{array}{c}
                                y \\
                                z \\
                              \end{array}
                            \right),  \left(
                                         \begin{array}{c}
                                         \partial_y h   \\
                                           \partial_z h \\
                                         \end{array}
                                       \right)
\rangle + \langle \left(
                              \begin{array}{c}
                                y \\
                                z \\
                              \end{array}
                            \right),  \left(
                                         \begin{array}{c}
                                         p_{010} \\
                                         p_{001} \\
                                         \end{array}
                                       \right)
\rangle,\\
M_+ &=& M +  2\varepsilon\left(
                                                                         \begin{array}{cc}
                                                                           p_{020} & \frac{1}{2} p_{011} \\
                                                                           \frac{1}{2} p_{011}^T & p_{002}\\
                                                                         \end{array}
                                                                       \right)+ \partial_{(y,z)}^2 h,\\
h_+ &=& h(y+ y_0, z+ z_0) - h(y_0, z_0)- \langle \left(
                                                   \begin{array}{c}
                                                     y \\
                                                     z \\
                                                   \end{array}
                                                 \right),  \left(
                                                   \begin{array}{c}
                                                     \partial_y h \\
                                                     \partial_z h \\
                                                   \end{array}
                                                 \right)
\rangle\\
&~& - \frac{1}{2} \langle \left(
                                \begin{array}{c}
                                  y \\
                                  z \\
                                \end{array}
                              \right),  \partial_{(y, z)}^2 h\left(
                                \begin{array}{c}
                                  y \\
                                  z \\
                                \end{array}
                              \right)
\rangle,\\
P_+ &=&  \bar{P}_+ \circ\phi + \varepsilon \langle \left(
                                                     \begin{array}{c}
                                                       y \\
                                                       z \\
                                                     \end{array}
                                                   \right), \left(
                                                                \begin{array}{cc}
                                                                  p_{020} & \frac{1}{2} p_{011} \\
                                                                  \frac{1}{2} p_{011}^T & p_{020} \\
                                                                \end{array}
                                                              \right)
\left(
                                                     \begin{array}{c}
                                                       y_0\\
                                                       z_0 \\
                                                     \end{array}
                                                   \right) \rangle.
\end{eqnarray*}

\begin{lemma}\label{Lm1}
Assume that
\begin{itemize}
\item[${(H3)}$] $M^T M \geq  \big(\min\{\mu_1, \cdots, \mu_l\} \big)^2 I_{n+ 2m}.$
\end{itemize}
 Then there is a constant $c$ such that, for all $|l| \leq l_0$, $|\partial_\lambda^l e_+ - \partial_\lambda^l e|_{\bar{\mathcal{O}}} \leq \frac{\gamma^{3b} s \mu}{\eta^{l_0}},$ $|\left(
   \begin{array}{c}
     \partial_\lambda^l y_0 \\
     \partial_\lambda^l z_0 \\
   \end{array}
 \right)
|_{\bar{\mathcal{O}}}\leq \frac{\gamma^{3b} s \mu}{\eta^{l_0}}$ and $||\partial_\lambda^l M_+ - \partial_\lambda^l M||_{\bar{\mathcal{O}}} \leq \frac{\gamma^{3b} \mu}{\eta^{l_0}}$.
\end{lemma}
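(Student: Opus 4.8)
The plan is to reduce the three estimates to two ingredients: (a) Cauchy bounds for the finitely many Taylor--Fourier coefficients of $P$ that enter the reduced normal form, and (b) a quantitative contraction argument for the translation vector $(y_0,z_0)$ defined by $(\ref{EQ16i})$. For (a), the standing bound $|\partial_\lambda^l P|_{D(r,s)\times\bar{\mathcal{O}}}\le \gamma^{3b}s^2\mu/\eta^{l_0}$ for $|l|\le l_0$, combined with Cauchy's inequality in $(y,z)$ on the polydisc of radius $s$, gives $|\partial_\lambda^l p_{000}|_{\bar{\mathcal{O}}}\le c\,\gamma^{3b}s^2\mu/\eta^{l_0}$, $|\partial_\lambda^l p_{010}|_{\bar{\mathcal{O}}},|\partial_\lambda^l p_{001}|_{\bar{\mathcal{O}}}\le c\,\gamma^{3b}s\mu/\eta^{l_0}$, and $|\partial_\lambda^l p_{020}|_{\bar{\mathcal{O}}},|\partial_\lambda^l p_{011}|_{\bar{\mathcal{O}}},|\partial_\lambda^l p_{002}|_{\bar{\mathcal{O}}}\le c\,\gamma^{3b}\mu/\eta^{l_0}$; these are exactly the coefficients occurring in the displayed formulas for $e_+$, $M_+$ and in equation $(\ref{EQ16i})$.

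The crux is (b). I would recast $(\ref{EQ16i})$ as the fixed-point problem $w=-M^{-1}\bigl[(\varepsilon p_{010},\varepsilon p_{001})+\partial_{(y,z)}h(w)\bigr]$ with $w=(y_0,z_0)$, which is legitimate because hypothesis $(H3)$ forces every singular value of the symmetric matrix $M$ to be at least $\min\{\mu_1,\cdots,\mu_l\}$, hence $|M^{-1}|\le 1/\min\{\mu_1,\cdots,\mu_l\}$. Since $h$ vanishes to order $3$, $\partial_{(y,z)}h(w)=O(\max_j\mu_j\,|w|^2)$ with differential $O(\max_j\mu_j\,|w|)$, so on the ball $\{|w|\le c\,\gamma^{3b}s\mu/\eta^{l_0}\}$ the right-hand side maps the ball into itself and is a contraction; the decisive point is that the inhomogeneity $(\varepsilon p_{010},\varepsilon p_{001})$ has size $\varepsilon\,\gamma^{3b}s\mu/\eta^{l_0}$ and, because $\varepsilon\le\mu_j$ for every $j$, the product $|M^{-1}(\varepsilon p_{010},\varepsilon p_{001})|$ is still $O(\gamma^{3b}s\mu/\eta^{l_0})$ even though $|M^{-1}|$ may be large. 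This produces a unique $(y_0,z_0)(\lambda)$ with $|(y_0,z_0)|_{\bar{\mathcal{O}}}\le c\,\gamma^{3b}s\mu/\eta^{l_0}$. For the $\lambda$-derivatives I would differentiate the identity $w(\lambda)\equiv -M(\lambda)^{-1}\bigl[(\varepsilon p_{010}(\lambda),\varepsilon p_{001}(\lambda))+\partial_{(y,z)}h(w(\lambda),\lambda)\bigr]$ and induct on $|l|$: each differentiation produces only $\partial_\lambda^{l'}M$, $\partial_\lambda^{l'}(p_{010},p_{001})$, $\partial_\lambda^{l'}(\partial_{(y,z)}h)$ with $|l'|\le|l|$, paired with powers of $M^{-1}$, and since $M$ and all its $\lambda$-derivatives are $O(\max_j\mu_j)$ while $\varepsilon\le\mu_j$, each step costs only a fixed constant, giving $|\partial_\lambda^l(y_0,z_0)|_{\bar{\mathcal{O}}}\le c\,\gamma^{3b}s\mu/\eta^{l_0}$. \emph{Controlling $M^{-1}$ for the multi-scale matrix $M$ --- whose eigenvalues can be as small as $\min_j\mu_j$ --- is the main obstacle, and it is precisely here that $(H3)$ and the scale relation $\varepsilon\le\mu_j$ are used.}

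Finally I would substitute the bounds from the first two steps into the closed-form expressions preceding the lemma. For $M_+$: from $M_+-M=2\varepsilon\,\Pi+\partial^2_{(y,z)}h(y_0,z_0)$, where $\Pi$ is the symmetric block matrix built from $p_{020},p_{011},p_{002}$, the first term is $O(\varepsilon\,\gamma^{3b}\mu/\eta^{l_0})$ and the second, by the order of vanishing of $h$, is $O(\max_j\mu_j\,|(y_0,z_0)|)=O(\max_j\mu_j\,s\,\gamma^{3b}\mu/\eta^{l_0})$, so $|\partial_\lambda^l(M_+-M)|_{\bar{\mathcal{O}}}\le c(\varepsilon+\max_j\mu_j\,s)\gamma^{3b}\mu/\eta^{l_0}$. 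For $e_+$: reading off the displayed formula, $e_+-e$ is a finite sum each term of which is a product of $(y_0,z_0)$ --- or of $p_{000}$, or of $h(y_0,z_0)$ --- with a bounded factor; using $|(y_0,z_0)|\le c\,\gamma^{3b}s\mu/\eta^{l_0}$, $|\omega|\le c\max_i\varepsilon_i$, $|M|\le c\max_j\mu_j$ and the order of vanishing of $h$, each summand is bounded by a small quantity (one of $\max_i\varepsilon_i$, $\max_j\mu_j$, $\varepsilon$, $s$, or $\gamma^{3b}s\mu/\eta^{l_0}$ itself) times $\gamma^{3b}s\mu/\eta^{l_0}$, whence $|e_+-e|_{\bar{\mathcal{O}}}\le c\,\gamma^{3b}s\mu/\eta^{l_0}$. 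The $\lambda$-derivative versions follow by the Leibniz rule together with the fact that $\omega=\sum\varepsilon_i\omega_i$, $M=\sum\mu_j M_j$, $h=\sum\mu_j h^{j}_{ij}y^iz^j$ have analytic, $\varepsilon$-independent coefficients (so $\partial_\lambda^l\omega$, $\partial_\lambda^l M$, $\partial_\lambda^l h$ inherit the same smallness) and with the bounds on $\partial_\lambda^l(y_0,z_0)$ and on the $\partial_\lambda^l p$'s from the first two steps; absorbing the constant $c$ into the smallness of $\varepsilon_0$ (and $s_0$) yields the asserted estimates.
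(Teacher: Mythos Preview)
Your structure --- Cauchy bounds on the $p_{0ij}$, then solve $(\ref{EQ16i})$ for $(y_0,z_0)$, then substitute --- is the same as the paper's, and your contraction argument for the existence and the undifferentiated bound $|(y_0,z_0)|_{\mathcal{O}}\le c\,\gamma^{3b}s\mu$ is a clean alternative to the paper's route (the paper instead shows, via Weyl's theorem applied to the Hermitian perturbation $\hat A=M^T\partial^2_{(y,z)}h+(\partial^2_{(y,z)}h)^TM+(\partial^2_{(y,z)}h)^T\partial^2_{(y,z)}h$, that $(M+\partial^2_{(y,z)}h)^T(M+\partial^2_{(y,z)}h)\ge\tfrac12(\min_j\mu_j)^2 I$, and reads off the bound directly). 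Both arguments use $s<\varepsilon^4$ and $\varepsilon\le\mu_j$ in the same essential way.

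There is, however, a genuine gap in your treatment of the $\lambda$-derivatives. When you differentiate the implicit relation $Mw+\partial_{(y,z)}h(w)=-\varepsilon(p_{010},p_{001})$ and solve for $\partial_\lambda w$, the right-hand side contains $(\partial_\lambda M)w$, and after multiplying by $(M+\partial^2 h)^{-1}$ you pick up a factor $|M^{-1}\,\partial_\lambda M|\sim \max_j\mu_j/\min_j\mu_j$. At order $|l|$ this ratio appears to the $|l|$-th power. Since the paper's setting imposes \emph{no order relation} among the $\mu_j$, this ratio is not bounded by an absolute constant, so the assertion ``each step costs only a fixed constant'' is not justified. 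The paper sidesteps this entirely: once $|(y_0,z_0)|_{\mathcal{O}}\le\gamma^{3b}s\mu$ is established (note: on $\mathcal{O}$, not $\bar{\mathcal{O}}$), it invokes the Cauchy inequality in $\lambda$ on the $\eta$-shrunk set $\bar{\mathcal{O}}$ to get $|\partial_\lambda^l(y_0,z_0)|_{\bar{\mathcal{O}}}\le \gamma^{3b}s\mu/\eta^{l_0}$ directly, without ever differentiating $M^{-1}$. Your contraction produces $(y_0,z_0)(\lambda)$ analytic in $\lambda$, so the same Cauchy step is available to you; replacing your inductive differentiation by this single Cauchy estimate closes the gap.
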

\begin{proof}
Obviously, $|\partial_\lambda^l p_{000}|_{\bar{\mathcal{O}}} \leq \frac{\gamma^{3b} s^2 \mu}{\eta^{l_0}}$, $|\partial_\lambda^l p_{010}|_{\bar{\mathcal{O}}},$ $|\partial_\lambda^l p_{001}|_{\bar{\mathcal{O}}} \leq \frac{\gamma^{3b} s \mu}{\eta^{l_0}}$, $|\partial_\lambda^l p_{020}|_{\bar{\mathcal{O}}},$ $|\partial_\lambda^l p_{011}|_{\bar{\mathcal{O}}},$ $|\partial_\lambda^l p_{002}|_{\bar{\mathcal{O}}} \leq \frac{\gamma^{3b} \mu}{\eta^{l_0}}.$
Then $||\partial_\lambda^l M_+ - \partial_\lambda^l M||_{\bar{\mathcal{O}}} \leq \frac{\gamma^{3b} \mu}{\eta^{l_0}}$ holds, if $s< \varepsilon^4$. Let $\mathfrak{B}=(M+ \partial_{(y, z)}^2 h)^T(M+ \partial_{(y, z)}^2 h)$. Then
$\hat{A} = M^T \partial_{(y, z)}^2 h + (\partial_{(y, z)}^2 h)^T M + (\partial_{(y, z)}^2 h)^T (\partial_{(y, z)}^2 h)$ is Hermitian. Then there is a unitary matrix $Q$ such that
$Q^* \hat{A} Q  = diag(\lambda_1, \cdots, \lambda_{2m}),$
where $\lambda_{\min} = \lambda_1 \leq \cdots \leq \lambda_{2m} = \lambda_{\max}.$ Moreover, if $s < \varepsilon^4$, we have
$|\lambda_{\min} |=| \min\limits_{\{x, 0\neq x \in S\}} \frac{x^* \hat{A} x}{x^* x}| \leq ||\hat{A}||_\infty\leq  s \leq \frac{1}{2} (\min\{\mu_1, \cdots, \mu_l\})^2,$
where $S = span\{x_1, \cdots, x_{n+2m}\}$, $\hat{A} x_i = \lambda_i x_i$, $1\leq i\leq n+2m,$ $||\hat{A}||_\infty =\max\limits_{i,j} \{ \hat{A}^{ij}\}$. According to Weyl Theorem, we have
\begin{eqnarray*}
\lambda_{\min} (\mathfrak{B}) = \lambda_{\min} (M^T M  + \hat{A})\geq \lambda_{\min} (M^T M ) +  \lambda_{\min} (\hat{A}) \geq \frac{1}{2} (\min\{ \mu_1, \cdots, \mu_l\})^2,
\end{eqnarray*}
which implies that $
\mathfrak{B} \geq \frac{1}{2} (\min\{ \mu_1, \cdots, \mu_l\})^2 I_{2m}.$
Then
\begin{eqnarray*}
\frac{1}{2} \big(\min\{\mu_1, \cdots, \mu_l\} \big)^2  (y_0^*, z_0^*) \left(
                                              \begin{array}{c}
                                                y_0 \\
                                                z_0 \\
                                              \end{array}
                                            \right) \leq
 (\varepsilon p_{010}^*, \varepsilon p_{001}^*)\left(
                                              \begin{array}{c}
                                                \varepsilon p_{010} \\
                                               \varepsilon p_{001} \\
                                              \end{array}
                                            \right).
\end{eqnarray*}
Therefore,
$|\left(
   \begin{array}{c}
     y_0 \\
     z_0 \\
   \end{array}
 \right)
|_{\mathcal{O}} \leq \gamma^{3b} s \mu.$
Hence, using Cauchy inequality, we have \\
$|\left(
   \begin{array}{c}
    \partial_\lambda^l y_0 \\
    \partial_\lambda^l z_0 \\
   \end{array}
 \right)
|_{\bar{\mathcal{O}}} \leq \frac{\gamma^{3b} s \mu}{\eta^{l_0}},$
which, together with the definition of $e_+$, also implies the first inequality of the lemma.

\end{proof}

\begin{remark}
As we have seen, the transformation $\phi$ remove the drift of frequency during the KAM step from $\nu$ to $\nu+1$. Consider the following transformation
$\phi_1:
x \rightarrow x, \left(
                          \begin{array}{c}
                            y \\
                            z \\
                          \end{array}
                        \right) \rightarrow \left(
                                              \begin{array}{c}
                                                y \\
                                                z \\
                                              \end{array}
                                            \right)+ \left(
                                              \begin{array}{c}
                                                y_1 \\
                                                z_1 \\
                                              \end{array}
                                            \right) $
such that for some $t$
\begin{eqnarray}\label{Eq50}
\left\{
  \begin{array}{ll}
  \frac{1}{2} \langle \left(
                                                               \begin{array}{c}
                                                                 y_1 \\
                                                                 z_1 \\
                                                               \end{array}
                                                             \right), \tilde{M} \left(
                                                               \begin{array}{c}
                                                                 y_1 \\
                                                                 z_1 \\
                                                               \end{array}
                                                             \right)
\rangle + \langle
\left(
  \begin{array}{c}
   \omega+ \varepsilon p_{010} \\
     \varepsilon p_{001} \\
  \end{array}
\right), \left(
           \begin{array}{c}
             y_1 \\
             z_1 \\
           \end{array}
         \right)\rangle
+ \tilde{p}= 0, \\
    M \left(
                                              \begin{array}{c}
                                                y_1 \\
                                                z_1 \\
                                              \end{array}
                                            \right)+\left(
                                              \begin{array}{c}
                                                \partial_y h \\
                                                \partial_z h \\
                                              \end{array}
                                            \right)   +  \left(
                                              \begin{array}{c}
                                                \varepsilon p_{010} \\
                                                \varepsilon p_{001} \\
                                              \end{array}
                                            \right) + \left(
                                                        \begin{array}{c}
                                                          t \omega \\
                                                          0 \\
                                                        \end{array}
                                                      \right) = 0,
  \end{array}~~~~~~
\right.
\end{eqnarray}
where $
\tilde{p}=\varepsilon p_{000} + h(y_1,z_1), \ \tilde{M} = M +  2\varepsilon\left(
                                                                         \begin{array}{cc}
                                                                           p_{k20} & \frac{1}{2} p_{k11} \\
                                                                           \frac{1}{2} p_{k11}^T & p_{k02}\\
                                                                         \end{array}
                                                                       \right).$
Under the following assumption
\begin{itemize}
\item[${(H3')}$] $\left(
            \begin{array}{cc}
              M & \omega \\
              \omega^T & 0 \\
            \end{array}
          \right)^T \left(
            \begin{array}{cc}
              M & \omega \\
              \omega^T & 0 \\
            \end{array}
          \right) \geq (\min\limits_{0\leq\hat{\iota}_1 \leq m, 0\leq\hat{\iota}_2 \leq l}\{\varepsilon_{\hat{\iota}_1}, \mu_{\hat{\iota}_2}\})^2 I_{n+ 2m+1},$
\end{itemize}
there is a $(x_1, y_1,t)$ such that $(\ref{Eq50})$ hold, which means that on a given energy surface there is a transformation $\phi_1$ such that the frequency of the transformed system is proportional to the frequency of initial system. Similar to the proof of Lemma \ref{Lm1}, we have $
|\left(
   \begin{array}{c}
    \partial_\lambda^l y_1 \\
    \partial_\lambda^l z_1 \\
   \end{array}
 \right)
|_{\bar{\mathcal{O}}} \leq \frac{\gamma^{3b} s \mu}{\eta^{l_0}}, \ for\ |l| \ \leq \ l_0.$
\end{remark}

\subsection{Homological Equations}\label{Homol.}
Denote
\begin{eqnarray*}
\mathcal{O}_+ &=& \{\lambda \in \mathcal{O}, |L_{k0}|\geq \frac{\min\limits_{0\leq\hat{\iota}_1 \leq m}\{\varepsilon_{\hat{\iota}_1}\} \gamma}{|k|^\tau}, \mathcal{A}_1^* \mathcal{A}_1 \geq (\frac{\min\limits_{\substack{0\leq\hat{\iota}_1 \leq m,\\ 0\leq\hat{\iota}_2 \leq l}}\{\varepsilon_{\hat{\iota}_1}, \mu_{\hat{\iota}_2}\} \gamma}{|k|^\tau})^2 I_{n + 2m},\\
&~&~~~\mathcal{A}_2^* \mathcal{A}_2 \geq (\frac{\min\limits_{0\leq\hat{\iota}_1 \leq m, 0\leq\hat{\iota}_2 \leq l}\{\varepsilon_{\hat{\iota}_1}, \mu_{\hat{\iota}_2}\} \gamma}{|k|^\tau})^2 I_{(n+ 2m)^2 }, 0 <|k| \leq K_+ \},
\end{eqnarray*}
where $\mathcal{A}_1$ $\mathcal{A}_2$, $L_{k0}$, $L_{k1}$ and $L_{k2}$ are defined as those in (\ref{Eq317}) - (\ref{Eq321}).

We will show that (\ref{706}) is solvable on $\mathcal{O}_+$. Let $\partial_z h(y,z) = \hat{h}_1 y+ \hat{h}_2 z.$ Substituting the Taylor-Fourier series of $F$ and $R$ into $(\ref{706})$ yields:
\begin{eqnarray}
\label{e1}\sqrt{-1} \langle k, \omega+ \Delta\rangle f_{k00} &=&  \varepsilon p_{k00},~~~~~~~\\
\label{e2}\sqrt{-1} \langle k, \omega+ \Delta\rangle f_{k10} - (M_{21}^T + \hat{h}_1) J f_{k01} &=& \varepsilon p_{k10},~~~~~~\\
\label{e3}\sqrt{-1} \langle k, \omega + \Delta\rangle f_{k01} - (M_{22}+ \hat{h}_2)J f_{k01}&= & \varepsilon p_{k01},~~~~~~\\
\label{e4}\sqrt{-1} \langle k, \omega + \Delta\rangle  f_{k20} -(M_{21}^T + \hat{h}_1) J f_{k11} &=& \varepsilon p_{k20},~~~~~~\\
\label{e5}\sqrt{-1} \langle k, \omega + \Delta\rangle  f_{k11} + 2 f_{k02} J (M_{21}+ \hat{h}_1) - (M_{22}+ \hat{h}_2) J f_{k11}&=& \varepsilon p_{k11},~~~~~~\\
\label{e6}\sqrt{-1} \langle k, \omega + \Delta\rangle  f_{k02} - (M_{22}^T+ \hat{h}_2) J f_{k02} + f_{k02} J (M_{22}+ \hat{h}_2) &=& \varepsilon p_{k02},~~~~~~
\end{eqnarray}
which are equivalent to
\begin{eqnarray}
\label{eq1}(L_{k0} + \varpi) f_{k00} &=& \varepsilon p_{k00},~~~~~~~~~~\\
\label{eq2} \left(
  \begin{array}{cc}
    L_{k0}I_n +\varpi I_n  & -(M_{21}^T+ \hat{h}_1) J \\
    0 & L_{k1} +\varpi I_{2m}- \hat{h}_2J \\
  \end{array}
\right) \left(
          \begin{array}{c}
            f_{k10} \\
            f_{k01} \\
          \end{array}
        \right)&=&  \left(
                   \begin{array}{c}
                     \varepsilon p_{k10} \\
                     \varepsilon p_{k01} \\
                   \end{array}
                 \right),~~~~~~\\
\label{eq3} \mathcal{A}\left(
         \begin{array}{c}
           f_{k20} \\
           f_{k11} \\
           f_{k02} \\
         \end{array}
       \right)&=&  \left(
                    \begin{array}{c}
                      \varepsilon p_{k20} \\
                      \varepsilon p_{k11} \\
                      \varepsilon p_{k02} \\
                    \end{array}
                  \right),~~~~~~
\end{eqnarray}
where $\varpi = \sqrt{-1} \langle k, \Delta\rangle$ and
\begin{eqnarray}
\nonumber \Delta &=&  M_{11} y + M_{12} z + \partial_y h(y,z),\\
\label{Eq323}\mathcal{A} &=&  \left(
  \begin{array}{ccc}
    I_n \otimes (L_{k0} I_n + \varpi I_n) & I_n \otimes ((M_{21}^T+ \hat{h}_1) J) & 0 \\
    0 & I_n \otimes (L_{k1} + \varpi I_{2m} -\hat{h}_2 J) & a_{23} \\
    0 & 0 & a_{33} \\
  \end{array}
\right),~~~\\
\label{Eq324} a_{23}&=&-((M_{21}^T+\hat{h}_1) J)\otimes I_{2m},\\
\label{Eq435} a_{33}&=& L_{k2} + \varpi I_{4m^2} - (\hat{h}_2J)\otimes I_{2m} - I_{2m} \otimes(\hat{h}_2J).
\end{eqnarray}

\begin{lemma}\label{Eq53}
Let $|s| \leq \varepsilon^4$ and assume
\begin{itemize}
\item[${(H4)}$] $s^{\frac{1}{2}} K_+^{\tau+1} = o(\gamma).$
\end{itemize}
 The following hold for all $0< |k| \leq K_+$.

\begin{itemize}
\item[{(i)}]
Homology equations (\ref{eq1})--(\ref{eq3}) can be solved on $\mathcal{O}_+$ successively to obtain functions $f_{k00},$ $f_{k01},$ $f_{k10},$ $f_{k11},$ $f_{k20},$ $f_{k02},$ $0 <|k| \leq K_+$, which are smooth in $\lambda \in \mathcal{O}_+$ and $
\bar{f}_{kij}(\bar{y}, \bar{z}) = f_{-kij}(y,z),$ for all $0\leq |i|+|j|\leq 2$, $0 < |k|\leq K_+$, $(y,z)\in D(s).$ Moreover, on $D(s)\times \bar{\mathcal{O}}_+$,
\begin{eqnarray*}
|\partial_\lambda^l f_{k00}|_{D(s)\times \bar{\mathcal{O}}_+} \leq \frac{\gamma^{3b-1} s^2 \mu \Gamma(r- r_+)}{ \eta^{l_0}},\\
|\partial_\lambda^l f_{k10}|_{D(s)\times \bar{\mathcal{O}}_+}, |\partial_\lambda^l f_{k01}|_{D(s)\times \bar{\mathcal{O}}_+} \leq \frac{\gamma^{3b-1} s \mu \Gamma(r- r_+)}{ \eta^{l_0}},\\
|\partial_\lambda^l f_{k02}|_{D(s)\times \bar{\mathcal{O}}_+}, |\partial_\lambda^l f_{k11}|_{D(s)\times \bar{\mathcal{O}}_+}, |\partial_\lambda^l f_{k20}|_{D(s)\times \bar{\mathcal{O}}_+} \leq \frac{\gamma^{3b-1} \mu  \Gamma(r- r_+)}{ \eta^{l_0}}.
\end{eqnarray*}
\item[{(ii)}] On $\hat{D}(s) \times \bar{\mathcal{O}}_+$,
$
|\partial_\lambda^l \partial_x^i \partial_{(y,z)}^j F| \leq \frac{ \gamma^{3b-1}  \mu  \Gamma(r - r_+) }{ \eta^{l_0}},~~ |l| \leq l_0,~~|i| \leq l_0,~~ |j| \leq 2.
$
\end{itemize}
\end{lemma}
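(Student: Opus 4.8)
The plan is to solve the block-triangular systems \eqref{eq1}--\eqref{eq3} in cascade order, starting from the bottom-right corner of each triangular matrix, and to control each solved coefficient using the spectral lower bounds that define $\mathcal{O}_+$. First I would treat \eqref{eq1}: on $\mathcal{O}_+$ we have $|L_{k0}|\ge \min\{\varepsilon_{\hat\iota_1}\}\gamma/|k|^\tau$, and the perturbation $\varpi = \sqrt{-1}\langle k,\Delta\rangle$ satisfies $|\varpi| \le |k|\,|\Delta|_{D(s)} \le c|k| s$ on $D(s)$ (since $\Delta$ is linear in $(y,z)$ plus the higher-order $\partial_y h$); under (H4), $s^{1/2}K_+^{\tau+1} = o(\gamma)$ forces $|\varpi|$ to be a small fraction of $|L_{k0}|$, so $L_{k0}+\varpi$ is invertible with $|(L_{k0}+\varpi)^{-1}| \le 2|k|^\tau/(\min\{\varepsilon_{\hat\iota_1}\}\gamma)$. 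Dividing by this and using $|\partial_\lambda^l p_{k00}| \le c\gamma^{3b}s^2\mu\, e^{-|k|(r-r_+)}/\eta^{l_0}$ (from the truncation estimate in Section \ref{Trunc.} together with the analyticity/Fourier decay), Leibniz over $|l|\le l_0$ and the factor $|k|^\chi$ absorbed into $\Gamma(r-r_+)$ give the claimed bound on $\partial_\lambda^l f_{k00}$. Here one loses one power of $\gamma$ (hence $\gamma^{3b-1}$) per division, which is why the exponent drops from $3b$ to $3b-1$.

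Next I would handle \eqref{eq2}: the matrix is upper block-triangular with diagonal blocks $L_{k0}I_n + \varpi I_n$ and $L_{k1}+\varpi I_{2m}-\hat h_2 J$. Rather than invert each diagonal block separately (the second one need not be well-conditioned on its own), I would use the condition $\mathcal{A}_1^*\mathcal{A}_1 \ge (\min\{\varepsilon_{\hat\iota_1},\mu_{\hat\iota_2}\}\gamma/|k|^\tau)^2 I_{n+2m}$ defining $\mathcal{O}_+$, which gives $|\mathcal{A}_1^{-1}| \le |k|^\tau/(\min\{\cdots\}\gamma)$. The matrix in \eqref{eq2} is $\mathcal{A}_1 + (\text{perturbation involving }\varpi\text{ and }\hat h_1,\hat h_2)$; since $|\hat h_1|_{D(s)}, |\hat h_2|_{D(s)} \le cs$ and $|\varpi|\le c|k|s$, the perturbation has norm $\le c|k|s$, which by (H4) is $o(\gamma/|k|^\tau)$, so a Neumann-series argument shows the full matrix is invertible with inverse bounded by $2|k|^\tau/(\min\{\cdots\}\gamma)$. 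Applying this to the right-hand side $(\varepsilon p_{k10},\varepsilon p_{k01})$, whose $\partial_\lambda^l$-norms are $\le c\gamma^{3b}s\mu\, e^{-|k|(r-r_+)}/\eta^{l_0}$, and again using Leibniz and $\Gamma(r-r_+)$, yields the bounds on $f_{k10},f_{k01}$. The system \eqref{eq3} is treated identically with $\mathcal{A}_2$ in place of $\mathcal{A}_1$: $\mathcal{A}$ in \eqref{Eq323} equals $\mathcal{A}_2$ plus a perturbation of norm $\le c|k|s = o(\gamma/|k|^\tau)$, the condition $\mathcal{A}_2^*\mathcal{A}_2 \ge (\cdots)^2 I$ on $\mathcal{O}_+$ bounds $|\mathcal{A}_2^{-1}|$, and Neumann series plus the estimate $|\partial_\lambda^l p_{kij}| \le c\gamma^{3b}\mu\, e^{-|k|(r-r_+)}/\eta^{l_0}$ for $|i|+|j|=2$ give the bounds on $f_{k20},f_{k11},f_{k02}$.

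For the reality condition $\bar f_{kij}(\bar y,\bar z) = f_{-kij}(y,z)$, I would observe that $P$ is real analytic, so $\bar p_{kij} = p_{-kij}$ (up to the obvious conjugation of arguments), and that the coefficient matrices satisfy the corresponding symmetry under $k\mapsto -k$ (since $L_{k0},L_{k1},L_{k2}$ have purely imaginary "$\langle k,\omega\rangle$" parts that flip sign, while the $M$-blocks are real); uniqueness of the solution to each triangular system then forces the stated relation. Part (ii) follows by summing the Taylor--Fourier series for $F$ in \eqref{709}: differentiating term by term in $x$ (each $\partial_x$ brings down a factor $|k|$, absorbed into the definition $\chi = (b+2)\tau + 5l_0 + 10$ which is chosen large enough to dominate all such factors) and in $(y,z)$ (lowering degree, harmless on $\hat D(s)$), then bounding the sum $\sum_{0<|k|\le K_+}$ by $\Gamma(r-r_+)$ via the shrinking of the strip from $D(s)$ to $\hat D(s) = D(r_+ + \tfrac78(r-r_+),\cdot)$, which supplies the factor $e^{-|k|(r-r_+)/8}$. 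The main obstacle is the second and third steps: showing that the genuinely non-triangular, $s$-dependent perturbations of $\mathcal{A}_1$ and $\mathcal{A}_2$ do not destroy invertibility — this is exactly where (H4) is indispensable, since it is what guarantees $|k|s \ll \gamma/|k|^\tau$ for all $|k|\le K_+$ and hence that the spectral lower bounds carved out in the definition of $\mathcal{O}_+$ survive the perturbation; keeping careful track of the single lost power of $\gamma$ and of the $|k|$-powers absorbed into $\Gamma(r-r_+)$ through all the Leibniz expansions is the bookkeeping-heavy part.
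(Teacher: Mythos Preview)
Your invertibility argument for the perturbed coefficient matrices is correct but proceeds differently from the paper. You write $\mathcal{A}_3=\mathcal{A}_1+E$ (and likewise $\mathcal{A}=\mathcal{A}_2+E'$), observe that the defining condition $\mathcal{A}_1^*\mathcal{A}_1\ge(\min\{\varepsilon_{\hat\iota_1},\mu_{\hat\iota_2}\}\gamma/|k|^\tau)^2 I$ on $\mathcal{O}_+$ gives $|\mathcal{A}_1^{-1}|\le |k|^\tau/(\min\{\cdots\}\gamma)$, and then run a Neumann series once $|E|\,|\mathcal{A}_1^{-1}|<1$ is verified via (H4). The paper instead works at the level of Hermitian forms: it writes $\mathcal{A}_3^*\mathcal{A}_3=\mathcal{A}_1^*\mathcal{A}_1+\mathcal{A}_4$ with $\mathcal{A}_4$ Hermitian, bounds $|\lambda_{\min}(\mathcal{A}_4)|\le\|\mathcal{A}_4\|_\infty\le|k|s$, and invokes Weyl's inequality to conclude $\lambda_{\min}(\mathcal{A}_3^*\mathcal{A}_3)\ge\tfrac12(\min\{\cdots\}\gamma/|k|^\tau)^2$; the bound on $|f_{k10}|,|f_{k01}|$ then drops out of the quadratic-form inequality $f^*\mathcal{A}_3^*\mathcal{A}_3 f=\varepsilon^2 p^*p$ directly, without ever forming $\mathcal{A}_3^{-1}$. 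The two routes are equivalent in strength; yours is more elementary, while the paper's stays closer to the singular-value language in which $\mathcal{O}_+$ is defined and extends verbatim to the $(n+2m)^2$-block for \eqref{eq3} (the paper relegates the computation of $\mathcal{A}^*\mathcal{A}-\mathcal{A}_2^*\mathcal{A}_2$ to Appendix~\ref{Nonsin}).

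There is, however, a genuine gap in how you pass to the $\lambda$-derivatives. You propose to obtain $|\partial_\lambda^l f_{kij}|$ by Leibniz, differentiating the inverse in $\lambda$. But each differentiation of $(L_{k0}+\varpi)^{-1}$ (or of $\mathcal{A}_3^{-1}$, $\mathcal{A}^{-1}$) produces an extra copy of that inverse, so after $l_1$ derivatives you pick up $(\,|k|^\tau/(\min\{\cdots\}\gamma)\,)^{l_1+1}$ rather than a single such factor; the resulting bound carries $\gamma^{3b-1-l}$, not the claimed $\gamma^{3b-1}$, and the extra losses in $\gamma$ (and in $\varepsilon$) cannot be recovered from $\Gamma(r-r_+)$ or $\eta^{-l_0}$. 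The paper avoids this by first establishing the $l=0$ estimate on the \emph{full} parameter set $\mathcal{O}_+$ (losing exactly one power of $\gamma$), and then applying Cauchy's inequality in $\lambda$ on the shrunk set $\bar{\mathcal{O}}_+$ to get all $|\partial_\lambda^l f_{kij}|$ at the sole cost of $\eta^{-l_0}$. You should replace the Leibniz step by this Cauchy argument; everything else in your plan goes through.
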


\begin{proof}
According to $(\ref{eq1})$, we have $
f_{k00} = (L_{k0}+ \varpi)^{-1} p_{k00}.$
Since {(H4)} and $s\leq \varepsilon^2$, on $\mathcal{O}_+$ we have
$|L_{k0}+ \varpi|\geq \frac{1}{2} \frac{\min\{\varepsilon_1, \cdots, \varepsilon_m\} \gamma}{|k|^\tau}$,
which means that $|(L_{k0}+ \varpi)^{-1}| \leq \frac{2|k|^\tau}{\min\{\varepsilon_1, \cdots, \varepsilon_m\} \gamma}.$
Then $
|f_{k00}|_{D(s)\times\mathcal{O}_+} \leq  \gamma^{3b-1} s^2 \mu  \Gamma(r - r_+).$
Therefore,
\begin{eqnarray}
\label{Eq23}|\partial_\lambda^l f_{k00}|_{D(s)\times\bar{\mathcal{O}}_+} \leq  \frac{\gamma^{3b-1} s^2 \mu  \Gamma(r - r_+) }{ \eta^{l_0}}.
\end{eqnarray}

According to $(\ref{eq2})$, we have
\begin{eqnarray}\label{EQ1}
(f_{k10}^*, f_{k01}^*) \mathcal{A}_3^* \mathcal{A}_3  \left(
          \begin{array}{c}
            f_{k10} \\
            f_{k01} \\
          \end{array}
        \right) = \varepsilon^2 (p_{k10}^*, p_{k01}^*)\left(
                   \begin{array}{c}
                     p_{k10} \\
                     p_{k01} \\
                   \end{array}
                 \right),
\end{eqnarray}
where $\mathcal{A}_3 = \left(
  \begin{array}{cc}
    L_{k0}I_n +\varpi I_n  & -(M_{21}^T+ \hat{h}_1) J \\
    0 & L_{k1} +\varpi I_{2m}- \hat{h}_2J \\
  \end{array}
\right).$ Directly, $
\mathcal{A}_3^* \mathcal{A}_3 = \mathcal{A}_1^* \mathcal{A}_1 + \mathcal{A}_4,$
where $\hat{h}_1=O(s),$ $\hat{h}_2=O(s),$ $\mathcal{A}_4 = \left(
                                      \begin{array}{cc}
                                        a_{11} & a_{12}\\
                                        a_{21} & a_{22} \\
                                      \end{array}
                                    \right),$ and
\begin{eqnarray*}
a_{11}&=& (\bar{L}_{k0} \varpi + \bar{\varpi} L_{k0} + \bar{\varpi}\varpi)I_n,\\
a_{12}&=& - \bar{\varpi} I_n \cdot M_{21}^T J  - \bar{L}_{k0}I_n\cdot (\hat{h}_1 J)- \bar{\varpi}I_n \cdot (\hat{h}_1 J),\\
a_{21}&=&-(M_{21}^T J)^* \cdot\varpi I_n - (\hat{h}_1J)^*\cdot (L_{k0} I_n) - (\hat{h}_1J)^*\cdot(\varpi I_n),\\
a_{22}&=& L_{k1}^*\cdot\varpi I_{2m} + \bar{\varpi}I_{2m}\cdot L_{k1}+\bar{\varpi}\varpi I_{2m}+ (M_{21}^T J)^* \cdot (\hat{h}_1 J)+ (\hat{h}_1 J)^* (M_{21}^T J)\\
&~&+ (\hat{h}_1 J)^* \cdot (\hat{h}_1 J)+ L_{k1}^* \cdot (\hat{h}_2 J)- (\varpi I_{2m})^* \cdot (\hat{h}_2 J)\\
&~& - (\hat{h}_2 J)^* \cdot L_{k1}- (\hat{h}_2 J )^* \cdot (\varpi I_{2m}) + (\hat{h}_2 J)^* \cdot (\hat{h}_2 J).
\end{eqnarray*}
Obviously, $\mathcal{A}_4$ is Hermitian. Then there is a unitary matrix $Q$ such that
$Q^* \mathcal{A}_4 Q  = diag(\lambda_1, \cdots, \lambda_{n+ 2m}),$
where $\lambda_{\min} = \lambda_1 \leq \cdots \leq \lambda_{n+ 2m} = \lambda_{\max}.$ Moreover, if $s < \varepsilon^4$, we have
$$|\lambda_{\min} |=| \min\limits_{\{x, 0\neq x \in S\}} \frac{x^* \mathcal{A}_4 x}{x^* x}|\leq ||\mathcal{A}_4||_\infty\leq |k| s\leq |k|s^{\frac{1}{2}} \varepsilon^2 \leq \frac{1}{2} (\frac{\min\limits_{\substack{0\leq\hat{\iota}_1 \leq m,\\ 0\leq\hat{\iota}_2 \leq l}}\{\varepsilon_{\hat{\iota}_1}, \mu_{\hat{\iota}_2}\} \gamma}{|k|^\tau})^2,$$
where $S = span\{x_1, \cdots, x_{n+2m}\}$, $\mathcal{A}_4 x_i = \lambda_i x_i$, $1\leq i\leq n+2m,$ $||\mathcal{A}_4||_\infty =\max\limits_{i,j} \{\mathcal{A}_4^{ij}\}$. According to Weyl Theorem, we have
$$
\lambda_{\min} (\mathcal{A}_3^* \mathcal{A}_3) \geq \lambda_{\min} (\mathcal{A}_1^* \mathcal{A}_1) +  \lambda_{\min} (\mathcal{A}_4)\geq \frac{1}{2} (\frac{\min\limits_{0\leq\hat{\iota}_1 \leq m, 0\leq\hat{\iota}_2 \leq l}\{\varepsilon_{\hat{\iota}_1}, \mu_{\hat{\iota}_2}\} \gamma}{|k|^\tau})^2,$$
which implies that $\mathcal{A}_3^* \mathcal{A}_3 \geq \frac{1}{2} ( \frac{ \min\limits_{0\leq\hat{\iota}_1 \leq m, 0\leq\hat{\iota}_2 \leq l}\{\varepsilon_{\hat{\iota}_1}, \mu_{\hat{\iota}_2}\}  \gamma}{|k|^\tau})^2 I_{n+2m}.$
Therefore,
\begin{eqnarray*}
(f_{k10}^*, f_{k01}^*) \frac{1}{2} ( \frac{\min\limits_{\substack{0\leq\hat{\iota}_1 \leq m,\\ 0\leq\hat{\iota}_2 \leq l}}\{\varepsilon_{\hat{\iota}_1}, \mu_{\hat{\iota}_2}\}  \gamma}{|k|^\tau})^2 I_{n+2m} \left(
          \begin{array}{c}
            f_{k10} \\
            f_{k01} \\
          \end{array}
        \right) \leq  \varepsilon^2(p_{k10}^* p_{k10}+ p_{k01}^* p_{k01}),
\end{eqnarray*}
i.e., $|f_{k01}|\leq \gamma^{3b-1} s \mu \Gamma(r - r_+)$ and  $|f_{k10}|\leq \gamma^{3b-1} s \mu \Gamma(r - r_+).$
Hence
\begin{eqnarray}
\label{Eq21} |\partial_\lambda^l f_{k10}|, |\partial_\lambda^l f_{k01}|&\leq&\frac{ \gamma^{3b-1} s \mu \Gamma(r - r_+) }{ \eta^{l_0}}.
\end{eqnarray}

According to $(\ref{eq3}),$ we have
\begin{eqnarray*}
(f_{k20}^*, f_{k11}^*, f_{k02}^*)\mathcal{A}^*\mathcal{A}\left(
         \begin{array}{c}
           f_{k20} \\
           f_{k11} \\
           f_{k02} \\
         \end{array}
       \right)&=& (\varepsilon p_{k20}^*,\varepsilon p_{k11}^*,\varepsilon p_{k02}^*) \left(
                    \begin{array}{c}
                     \varepsilon p_{k20} \\
                     \varepsilon p_{k11} \\
                     \varepsilon p_{k02} \\
                    \end{array}
                  \right),
\end{eqnarray*}
where $\mathcal{A}$, $a_{23}$, $a_{33}$ defined as those in (\ref{Eq323}), (\ref{Eq324}), (\ref{Eq435}) and
\begin{eqnarray*}
\mathcal{A}^* &=&  \left(
  \begin{array}{ccc}
    I_n \otimes (L_{k0} I_n + \varpi I_n)^* & 0 & 0 \\
    I_n \otimes ((M_{21}^T+ \hat{h}_1) J)^* & I_n \otimes (L_{k1} + \varpi I_{2m} -\hat{h}_2 J)^* & 0 \\
    0 & -((M_{21}^T+\hat{h}_1) J)^*\otimes I_{2m} & \breve{a}_{33} \\
  \end{array}
\right),\\
\breve{a}_{33} &=& L_{k2}^* + \bar{\varpi} I_{4m^2} - (\hat{h}_2J)^*\otimes I_{2m} - I_{2m} \otimes(\hat{h}_2J)^*.
\end{eqnarray*}
Here, we use the fact that $(A\otimes B)^* = A^* \otimes B^*$.
Further, we have\\
$\mathcal{A}^*\mathcal{A} \geq \frac{1}{2}(\frac{\min\limits_{0\leq\hat{\iota}_1 \leq m, 0\leq\hat{\iota}_2 \leq l}\{\varepsilon_{\hat{\iota}_1}, \mu_{\hat{\iota}_2}\} \gamma}{|k|^\tau})^2 I_{(n+ 2m)^2}.$
(We place this proof on Appendix \ref{Nonsin}.) Hence, $|f_{k20}|, \ |f_{k11}|, \  |f_{k02}|\leq\gamma^{3b-1} \mu  \Gamma(r - r_+).$
Moreover,
\begin{eqnarray}
\label{Eq20}|\partial_\lambda^l f_{k20}|, |\partial_\lambda^l f_{k11}|, |\partial_\lambda^l f_{k02}|&\leq&\frac{\gamma^{3b-1} \mu  \Gamma(r - r_+) }{ \eta^{l_0}}.
\end{eqnarray}

Putting (\ref{Eq23}), (\ref{Eq21}) - (\ref{Eq20}) into (\ref{709}), by direct calculation, we get the conclusion (2).

\end{proof}

\subsection{New Perturbation}\label{NP}
Next, we will estimate the following new perturbation
$$P_+ = \big(\int_0 ^1 \{R_t,F\}\circ \phi_F^t dt + \varepsilon(P - R )\circ \phi_F^1 \big)\circ\phi + \varepsilon \langle \left(
                                                     \begin{array}{c}
                                                       y \\
                                                       z \\
                                                     \end{array}
                                                   \right), \left(
                                                                \begin{array}{cc}
                                                                  p_{k20} & \frac{1}{2} p_{k11} \\
                                                                  \frac{1}{2} p_{k11}^T & p_{k20} \\
                                                                \end{array}
                                                              \right)
\left(
                                                     \begin{array}{c}
                                                       y_0\\
                                                       z_0 \\
                                                     \end{array}
                                                   \right) \rangle. $$

Recall that $
R_t = (1-t) \{N, F\} +\varepsilon R = \varepsilon\big(t R + (1-t) [R]\big).$
Then
\begin{eqnarray*}
\{R_t, F\} &=& \partial_x R_t \partial_y F - \partial_y R_t \partial_x F + \partial_z R_t J \partial_z F\\
&=& \varepsilon\big(\partial_x (t R) \partial_y F - \partial_y (t R + (1-t) [R]) \partial_x F + \partial_z (t R + (1-t) [R]) J \partial_z F\big).
\end{eqnarray*}
Further, $
|\{R_t, F\}| \leq \varepsilon\gamma^{3b} s^2 \mu^2  \Gamma(r - r_+).$

Assume
\begin{itemize}
\item[{(H5)}] $c \mu\Gamma( r- r_+) < \frac{1}{8} (r -r_+),$
\item[{(H6)}] $c \mu \Gamma ( r- r_+) < \frac{1}{8}\alpha,$
\item[{(H7)}] $\mu  < \frac{1}{8}\alpha.$
\end{itemize}
Standardly, for all $0\leq t\leq1$,
\begin{eqnarray}\label{812}
\phi_F^t&:& D_{\frac{\alpha}{4}} \rightarrow D_{\frac{\alpha}{2}},\\
\phi&:& D_{\frac{\alpha}{8}} \rightarrow D_{\frac{\alpha}{4}}
\end{eqnarray}
are well defined, real analytic and dependent smoothly on $\lambda \in {\mathcal{O}}_+$. Moreover, there is a constant $c$ such that, for all $0\leq t\leq 1$, $|l|\leq l_0$, $
|\partial_\lambda^l \phi_F^t|_{D_{\frac{\alpha}{4} }\times \bar{\mathcal{O}}_+} \leq \mu \Gamma(r -r_+).$
Then $|\int_0^1 \{R_t, F\} \circ \phi_F^t dt|_{D_{\frac{\alpha}{4}} \times \mathcal{O}_+} \leq \varepsilon\gamma^{3b} s^2 \mu^2 \Gamma(r - r_+),$ and $|( P- R)\circ \phi_F^1|_{D_{\frac{\alpha}{4}} \times \mathcal{O}_+} \leq \gamma^{3b} s^2 \mu^2.$
Therefore, $ |\bar{P}_+|= |\int_0 ^1 \{R_t,F\}\circ \phi_F^t dt +\varepsilon (P - R )\circ \phi_F^1| \leq c\varepsilon\gamma^{3b} s^2 \mu^2  \Gamma(r - r_+).$
Moreover, $
|\partial_\lambda^l \bar{P}_+|\leq c \varepsilon \frac{\gamma^{3b} s^2 \mu^2  \Gamma(r - r_+)}{\eta^{l_0}}.$

According to {Lemma \ref{Lm1}}, we get $
|\partial_\lambda^l \phi|_{\bar{\mathcal{O}}_+} \leq \frac{\gamma^{3b} s \mu}{\eta^{l_0}}.$
Hence
\begin{eqnarray*}
&~& |\partial_\lambda ^l P_+|_{D_{\frac{\alpha}{8}}\times \bar{\mathcal{O}}_+}\\
&=&|\partial^{l} \bar{P}_+ \circ \phi+ \varepsilon\partial^{l} \langle \left(
                                                                   \begin{array}{c}
                                                                     y \\
                                                                     z \\
                                                                   \end{array}
                                                                 \right),
\left(
  \begin{array}{cc}
    p_{020} & \frac{1}{2} p_{011} \\
    \frac{1}{2} p_{011}^T & p_{020} \\
  \end{array}
\right)\left(
         \begin{array}{c}
           y_0 \\
           z_0 \\
         \end{array}
       \right)
\rangle|_{D_{\frac{\alpha}{8}}\times \bar{\mathcal{O}}_+}\\
&\leq& \varepsilon\frac{\gamma^{3b} s^2 \mu^2 \Gamma(r - r_+)}{\eta^{l_0}} + \varepsilon\frac{\gamma^{3b} s^2 \mu^2 \gamma^b}{\eta^{l_0}} \leq \varepsilon\frac{\gamma^{3b} s^2 \mu^2}{\eta^{l_0}} \big(\Gamma(r - r_+) + \gamma^b\big)\\
&\leq&\varepsilon \frac{\gamma_+^{3b} s_+^2 \mu_+}{\eta_+^{l_0}},
\end{eqnarray*}
under the following assumption
\begin{itemize}
\item[\bf{(H8)}] $\gamma^{3b} \mu^\sigma (\Gamma(r - r_+) + \gamma^b) \leq \gamma_+^{3b}.$
\end{itemize}

Besides, for $\lambda\in \mathcal{O}_+$, it is easy to check $|L_{k0}^+|  \geq \frac{\min \{\varepsilon_i, \cdots, \varepsilon_m\} \gamma_+}{|k|^\tau},$  $(\mathcal{A}_1^+)^* \mathcal{A}_1^+ \geq (\frac{\min\limits_{0\leq\hat{\iota}_1 \leq m, 0\leq\hat{\iota}_2 \leq l}\{\varepsilon_{\hat{\iota}_1}, \mu_{\hat{\iota}_2}\} \gamma_+}{|k|^\tau})^2 I_{n + 2m},$ $(\mathcal{A}_2^+)^* \mathcal{A}_2^+ \geq (\frac{\min\limits_{0\leq\hat{\iota}_1 \leq m, 0\leq\hat{\iota}_2 \leq l}\{\varepsilon_{\hat{\iota}_1}, \mu_{\hat{\iota}_2}\} \gamma_+}{|k|^\tau})^2 I_{(n+ 2m)^2 },$
where the definitions of $L_{k0}^+$, $\mathcal{A}_1^+$ and $\mathcal{A}_2^+$ are similar to $L_{k0}$, $\mathcal{A}_1$ and $\mathcal{A}_2$, respectively.

We have accomplished a KAM step from $\nu$ to $\nu+1$ under some conditions so far. Next we will introduce a series of iteration sequences, under which the KAM step mentioned above could be iterated infinitely. {Most of the iteration sequences comes from \cite{Li}. }

\subsection{Iteration Lemma}\label{727}
Let $r_0$, $\gamma_0$, $s_0$, $\mu_0$, $\eta_0$, $\mathcal{O}_0$, $\bar{\mathcal{O}}_0$, $H_0$, $N_0$, $P_0$ be given as above. For any $\nu = 0,1, \cdots,$  denote
\begin{eqnarray*}
r_\nu &=& r_0 (1 - \sum_{i=1}^\nu \frac{1}{2^{i+1}}), ~~~\gamma_\nu = \gamma_0 (1 - \sum_{i=1}^\nu \frac{1}{2^{i+1}}),~~~\eta_\nu = \eta_0 (1 - \sum_{i=1}^\nu \frac{1}{2^{i+1}}),\\
\mu_\nu &=& (64c_0)^{\frac{1}{1- \lambda_0}} \mu_{\nu-1}^{1 + \sigma}, ~~~K_\nu = ([\log\frac{1}{\mu_{\nu-1}}]+1)^{3{\eta}},~~~\alpha_\nu = \mu_\nu^{\frac{1}{3}},\\
D_\nu &=& D(r_\nu, s_\nu),~~~~~~~s_\nu = \frac{1}{8} \alpha_{\nu-1} s_{\nu-1},\\
\mathcal{O}_\nu &=& \{\lambda \in \mathcal{O}_{\nu-1}: |L_{k0}^\nu| > \frac{\min\limits_{0\leq\hat{\iota}_1 \leq m}\{\varepsilon_{\hat{\iota}_1}\} \gamma}{|k|^\tau},  (\mathcal{A}_1^*)^{\nu} \mathcal{A}_1^{\nu} \geq (\frac{\min\limits_{\substack{0\leq\hat{\iota}_1 \leq m,\\ 0\leq\hat{\iota}_2 \leq l}}\{\varepsilon_{\hat{\iota}_1}, \mu_{\hat{\iota}_2}\} \gamma}{|k|^\tau})^2 I_{n + 2m},\\
&~&~~~ (\mathcal{A}_2^*)^{\nu} (\mathcal{A}_2)^{\nu} \geq (\frac{\min\limits_{0\leq\hat{\iota}_1 \leq m, 0\leq\hat{\iota}_2 \leq l}\{\varepsilon_{\hat{\iota}_1}, \mu_{\hat{\iota}_2}\} \gamma}{|k|^\tau})^2 I_{(n+ 2m)^2 }, ~~~0<|k|\leq K_\nu\}.
\end{eqnarray*}

We have the following lemma.
\begin{lemma}
The KAM step described above is valid for all $\nu = 0,1,\cdots$, and the following hold for all $\nu = 1,2,\cdots.$
 \begin{itemize}
\item[{(i)}] $P_\nu$ is real analytic in $(x,y)\in D_\nu$ and smooth in $\lambda \in \mathcal{O}_\nu$, and moreover, $
|\partial_\lambda^l P_\nu |_{D_\nu \times \mathcal{O}_\nu} \leq \frac{\gamma_\nu^{3b} s_\nu^{2} \mu_\nu}{\eta_\nu^{l_0}}, ~~~|l| \leq l_0;$
\item[{(ii)}] $\phi_F^t: D_\nu \times {\mathcal{O}}_\nu \rightarrow D_{\nu-1}$, is symplectic for each $\lambda \in {\mathcal{O}}_\nu$, and is of class $C^{\alpha, l_0}$, respectively, where $\alpha$ stands for real analyticity. Moreover,
$H_\nu = H_{\nu-1}\circ \phi_F^t \circ \phi = N_\nu + P_\nu,$
on ${D_\nu} \times {\mathcal{O}}_\nu$, and $
|\phi_F^t - id | \leq c_0 \gamma^{3b-1} \frac{\mu_0}{2^\nu},$ $|\phi- id | \leq c_0 \gamma^{3b-1} \frac{\mu_0}{2^\nu};$
\item[\bf{(3)}] $\mathcal{O}_\nu = \{\lambda\in \mathcal{O}_{\nu-1}: |L_{k0}| > \frac{\min\limits_{0\leq\hat{\iota}_1 \leq m}\{\varepsilon_{\hat{\iota}_1}\}\gamma_\nu}{|k|^\tau}, \mathcal{A}_1^* \mathcal{A}_1 \geq (\frac{\min\limits_{\substack{0\leq\hat{\iota}_1 \leq m,\\ 0\leq\hat{\iota}_2 \leq l}}\{\varepsilon_{\hat{\iota}_1}, \mu_{\hat{\iota}_2}\} \gamma_\nu}{|k|^\tau})^2 I_{n + 2m},\\
 ~~~~~~~~\mathcal{A}_2^* \mathcal{A}_2 \geq (\frac{\min\limits_{0\leq\hat{\iota}_1 \leq m, 0\leq\hat{\iota}_2 \leq l}\{\varepsilon_{\hat{\iota}_1}, \mu_{\hat{\iota}_2}\} \gamma_\nu}{|k|^\tau})^2 I_{(n+ 2m)^2},~~K_{\nu-1}<|k|\leq K_\nu\}$.
\end{itemize}
\end{lemma}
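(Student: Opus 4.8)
The plan is to prove the Iteration Lemma by induction on $\nu$, using the single KAM step constructed in Sections~\ref{Trunc.}--\ref{NP} as the inductive engine. The base case $\nu=0$ is exactly the hypothesis on $P_0$ stated before the lemma, together with the identity map for $\phi_F^t$ and $\phi$. For the inductive step, I would first verify that the hypotheses (H1)--(H8) and (H4) hold at the $\nu$-th stage, given the explicit choices of the iteration sequences $r_\nu,\gamma_\nu,\eta_\nu,\mu_\nu,s_\nu,K_\nu,\alpha_\nu$. This is the routine-but-essential bookkeeping: one substitutes $\mu_\nu=(64c_0)^{1/(1-\lambda_0)}\mu_{\nu-1}^{1+\sigma}$, $\alpha_\nu=\mu_\nu^{1/3}$, $s_\nu=\tfrac18\alpha_{\nu-1}s_{\nu-1}$, and $K_\nu=([\log\tfrac1{\mu_{\nu-1}}]+1)^{3\eta}$ into each of (H1)--(H8), and checks that the super-exponential smallness of $\mu_\nu$ dominates the polynomial factors $\Gamma(r_{\nu-1}-r_\nu)$ (whose growth is controlled by $\chi=(b+2)\tau+5l_0+10$ and the geometric gaps $r_{\nu-1}-r_\nu=r_0/2^{\nu+1}$). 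The key estimate here is that for $\mu_0$ sufficiently small (which fixes $\varepsilon_0$), all of (H1)--(H8) are satisfied simultaneously for every $\nu$; this is the standard convergence argument and I would only sketch that the dominant inequality is (H8), namely $\gamma^{3b}\mu^\sigma(\Gamma(r-r_+)+\gamma^b)\le\gamma_+^{3b}$, which follows once $\mu^\sigma\Gamma(r-r_+)\to 0$ fast enough.

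Granting the hypotheses, the KAM step produces the symplectic transformation $\phi_F^t\circ\phi$ and the new Hamiltonian $H_{\nu+1}=N_{\nu+1}+\varepsilon P_{\nu+1}$ with the sharpened perturbation bound $|\partial_\lambda^l P_{\nu+1}|_{D_{\nu+1}\times\bar{\mathcal O}_{\nu+1}}\le\varepsilon\gamma_{\nu+1}^{3b}s_{\nu+1}^2\mu_{\nu+1}/\eta_{\nu+1}^{l_0}$, which is precisely conclusion~(i) at level $\nu+1$. Conclusion~(ii): the symplecticity and real-analyticity of $\phi_F^t$ on $D_{\nu+1}\times\mathcal O_{\nu+1}\to D_\nu$ follow from the flow property of the Hamiltonian vector field of $F$ together with the bounds on $F$ from Lemma~\ref{Eq53}(ii); that $\phi$ maps into the correct domain is the content of~(\ref{812}) under (H5)--(H7). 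The composition formula $H_\nu=H_{\nu-1}\circ\phi_F^t\circ\phi$ is just the definition of $N_{\nu+1},P_{\nu+1}$ chained with the inductive hypothesis. The distance estimates $|\phi_F^t-\mathrm{id}|\le c_0\gamma^{3b-1}\mu_0/2^\nu$ and $|\phi-\mathrm{id}|\le c_0\gamma^{3b-1}\mu_0/2^\nu$ come from integrating the vector-field bound $|\partial_\lambda^l\partial_x^i\partial_{(y,z)}^jF|\le\gamma^{3b-1}\mu\Gamma(r-r_+)/\eta^{l_0}$ of Lemma~\ref{Eq53}(ii) and Lemma~\ref{Lm1}, and then absorbing $\mu_\nu\Gamma(r_\nu-r_{\nu+1})\le\mu_0/2^{\nu+1}$ into the geometric tail via (H5)--(H6); this again uses the super-exponential decay of $\mu_\nu$.

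Conclusion~(3) is a purely set-theoretic identity: I would show that the one-step nonresonant set $\mathcal O_+$ defined in Section~\ref{Homol.} with parameter $\gamma$ and frequency range $0<|k|\le K_+$, when intersected with the previously accumulated set $\mathcal O_\nu$ (whose constraints involve $0<|k|\le K_\nu$ at level $\gamma_\nu$), reduces after lowering $\gamma$ to $\gamma_{\nu+1}$ to the stated form with $K_\nu<|k|\le K_{\nu+1}$. Here one uses that $L_{k0}^{\nu+1}$, $\mathcal A_1^{\nu+1}$, $\mathcal A_2^{\nu+1}$ are $O(\mu_\nu)$-close to $L_{k0}^\nu$, $\mathcal A_1^\nu$, $\mathcal A_2^\nu$ (from Lemma~\ref{Lm1} and the update formulas for $\omega_\nu,M_\nu,h_\nu$), so the inequalities valid for the $\nu$-th quantities at level $\gamma_\nu/|k|^\tau$ persist for the $(\nu+1)$-th quantities at level $\gamma_{\nu+1}/|k|^\tau$ on the old range of $k$, while the genuinely new constraints are only those for $K_\nu<|k|\le K_{\nu+1}$. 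I expect the main obstacle to be \emph{not} any single estimate but the coherent choice of the constant $c_0$ (the maximum over all intermediate constants $c$) and of $\varepsilon_0$ so that the chain of inequalities (H1)--(H8), (H4) closes uniformly in $\nu$; in particular one must ensure $c_0$ is chosen \emph{after} all the $c$'s appearing in Lemma~\ref{Lm1}, Lemma~\ref{Eq53}, and the new-perturbation estimates, yet the iteration formula $\mu_\nu=(64c_0)^{1/(1-\lambda_0)}\mu_{\nu-1}^{1+\sigma}$ depends on $c_0$ — so one fixes $c_0$ first from those lemmas (which do not reference $\mu_\nu$), then derives the smallness threshold for $\mu_0$. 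The delicate points specific to the multi-scale setting — controlling $(\mathcal A_2^*)^\nu\mathcal A_2^\nu$ from below, i.e.\ the inverse of the multi-scale matrix with entries of size $\min\{\varepsilon_i,\mu_j\}$ — are already isolated into Lemma~\ref{Eq53} and Appendix~\ref{Nonsin}, so at the level of the Iteration Lemma they enter only as a black box.
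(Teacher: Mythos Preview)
Your proposal is correct and follows essentially the same approach as the paper: reduce the Iteration Lemma to verifying that the hypotheses (H1)--(H8) of the single KAM step hold uniformly in $\nu$ under the prescribed iteration sequences, after which conclusions (i), (ii), (3) are immediate from the one-step construction in Sections~\ref{Trunc.}--\ref{NP}. The paper's proof is terser---it declares (H1)--(H2), (H5)--(H7) standard, notes (H3) follows from $s<\varepsilon^4$, checks (H4) by writing out $K_+$ explicitly, and then concentrates on (H8), splitting it as $\mu^\sigma\Gamma(r-r_+)\le\tfrac12(\gamma_+/\gamma)^{3b}$ (standard) and $\mu^\sigma\gamma^b\le\tfrac12(\gamma_+/\gamma)^{3b}$, the latter verified via a short calculus argument showing $x\mapsto 2^{\sigma x}(1-\tfrac{1}{2^{x+1}-2})^{3b}$ is increasing---but the strategy and the identification of (H8) as the crux are the same as yours.
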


\begin{proof}
The proof of this lemma is equivalent to verify conditions {(H1)}--{(H8)} under iteration sequences. The verifications of {(H1)}--{(H2)}, {(H5)}--{(H7)} are standard. For example, refer to \cite{Li,Li7}. And assumption {(H3)} holds obviously according to the definition of $M_+$ when $s< \varepsilon^4$. Directly,
 \begin{eqnarray*}
 \mu_\nu &=& (64c_0)^{\frac{(1+\sigma)^{\nu} -1}{(1- \lambda_0)\sigma} } \mu_0 ^{(1+\sigma)^\nu},\\
 s_\nu &=& (\frac{1}{8})^{\nu} (64c_0)^{\frac{1}{3\sigma (1-\lambda_0)}(\frac{(1+\sigma)^\nu -1 -\sigma}{\sigma} - \nu +1)} \mu_0^{\frac{(1+\sigma)^\nu - 1}{3\sigma}} s_0,
 \end{eqnarray*}
 where $\sigma < 1.$ Then
 \begin{eqnarray*}
K_+ &=& ([\log \frac{1}{\mu}]+1)^{3\eta} = ([\log \frac{1}{(64 c_0)^{\frac{1}{1-\lambda_0} \frac{(1+\sigma)^\nu -1}{\sigma}} \mu_0^{(1+\sigma)^{\nu}}}]+1)^{3\eta}\\
&=& ([- (1+\sigma)^{\nu}(\frac{1}{(1-\lambda_0)}\log 64c_0 + \log \mu_0) + \frac{1}{(1- \lambda_0) \sigma} \log 64c_0] +1)^{3\eta}.
\end{eqnarray*}
Therefore, assumption {(H4)} holds.
 Next, under iteration sequences we will check assumption {(H8)}, which is equivalent to $
 \mu^\sigma \Gamma(r - r_+) + \mu^\sigma \gamma^b \leq \frac{\gamma_+^{3b}}{ \gamma^{3b}}.$
The inequality $ \mu^\sigma \Gamma(r - r_+) \leq \frac{1}{2} \frac{\gamma_+^{3b}}{ \gamma^{3b}}$ is standard and we omit the detail. We will show
\begin{eqnarray}\label{Eq30}
\mu^\sigma \gamma^b \leq \frac{1}{2}\frac{\gamma_+^{3b}}{ \gamma^{3b}}.
\end{eqnarray}
On the one hand, we have $\mu^{\sigma} \gamma^b \leq (\frac{\mu_0}{2^\nu})^{\sigma} \gamma_0 (1- \sum\limits_{i=1}^{\nu} \frac{1}{2^{i+1}}) < \frac{\mu_0^{\sigma} \gamma_0}{2^{\sigma\nu}},$
and $(\frac{\gamma_+}{\gamma})^{3b} = (1 - \frac{1}{2^{\nu+1} -2 })^{3b}.$
On the other hand, we get $2^{\sigma x} (1- \frac{1}{2^{x+1} -2})^{3b} > 2^{\sigma - 3b},$
since
\begin{eqnarray*}
&~&\frac{d}{dx}2^{\sigma x} (1- \frac{1}{2^{x+1} -2})^{3b}\\
&=&\frac{d (2^{\sigma x})}{dx} (1- \frac{1}{2^{x+1} -2})^{3b} + 2^{\sigma x} \frac{d\big( (1- \frac{1}{2^{x+1} -2})^{3b} \big)}{ dx}\\
&=& 2^{\sigma x} ln 2 (1- \frac{1}{2^{x+1} -2})^{3b-1} \big( \sigma (1- \frac{1}{2^{x+1} -2}) + 3b \frac{2^{x+1}}{(2^{x+1} -2)^2}\big)\\
&>& 0,
\end{eqnarray*}
where $x>1.$ Then for sufficient small $\mu_0$ and $\gamma_0$ we prove (\ref{Eq30}), i.e., for sufficient small $\mu_0$ and $\gamma_0$ {(H8)} holds.

\end{proof}

\subsection{Convergence and Measure Estimate}

The convergence is standard and we omit the detail. Let $\mathcal{O}_* = \bigcap\limits_{\nu = 0}^\infty \mathcal{O}_\nu$. We now show the measure estimate of $|\mathcal{O} \setminus \mathcal{O}_*|$, i.e. the measure of a set in which the nonresonant condition does not hold.
Let
\begin{eqnarray*}
R_{k, \nu+1} &=& \{\lambda \in \mathcal{O}_\nu: |L_{k0,\nu}(\lambda)|< \frac{\min\limits_{0\leq\hat{\iota}_1 \leq m}\{\varepsilon_{\hat{\iota}_1}\}\gamma_\nu}{|k|^{\tau+1}}, \mathcal{A}_{1,\nu}^* \mathcal{A}_{1,\nu} < (\frac{\min\limits_{\substack{0\leq\hat{\iota}_1 \leq m,\\ 0\leq\hat{\iota}_2 \leq l}}\{\varepsilon_{\hat{\iota}_1}, \mu_{\hat{\iota}_2}\}\gamma_\nu}{|k|^\tau})^2 I_{n + 2m},\\
&~&~~~or~ \mathcal{A}_{2,\nu}^* \mathcal{A}_{2,\nu} < (\frac{\min\limits_{\substack{0\leq\hat{\iota}_1 \leq m,\\ 0\leq\hat{\iota}_2 \leq l}}\{\varepsilon_{\hat{\iota}_1}, \mu_{\hat{\iota}_2}\} \gamma_\nu}{|k|^\tau})^2 I_{(n+ 2m)^2 }, ~ K_\nu< |k| \leq K_{\nu+1}\}.
\end{eqnarray*}
\begin{lemma}\label{estimate measure}
 Let $\tau > n(N+1)-1.$ Assume ${(D)}$, ${(M1)}$ and ${(M2)}$ hold. Then $|\mathcal{O} \setminus \mathcal{O}_*| = O(\gamma^{\frac{1}{N+1}})$ as $\gamma \rightarrow 0.$
\end{lemma}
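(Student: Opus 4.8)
\emph{Overall strategy.} The plan is to estimate $|\mathcal{O}\setminus\mathcal{O}_*|$ by summing, over all KAM steps $\nu$ and all nonzero modes $k\in\mathbb{Z}^n$, the measures of the discarded resonant blocks $R_{k,\nu+1}$, and to bound each such measure by a R\"ussmann-type sub-level set estimate made available by $(D)$, $(M1)$, $(M2)$. Since the $\mathcal{O}_\nu$ are nested with $\mathcal{O}_*=\bigcap_{\nu\ge0}\mathcal{O}_\nu$, one has $\mathcal{O}\setminus\mathcal{O}_*=\bigcup_{\nu\ge0}(\mathcal{O}_\nu\setminus\mathcal{O}_{\nu+1})$; as $\mu_\nu\to0$ forces $K_\nu\nearrow\infty$, the failure at step $\nu+1$ of a nonresonance condition attached to a mode $k$ with $K_\nu<|k|\le K_{\nu+1}$ puts $\lambda$ into $R_{k,\nu+1}$, so $\mathcal{O}_\nu\setminus\mathcal{O}_{\nu+1}\subset\bigcup_{K_\nu<|k|\le K_{\nu+1}}R_{k,\nu+1}$ (the modes $0<|k|\le K_1$ discarded at the initial step). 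Hence every $k\ne0$ lies in exactly one block, with index $\nu(k)$, and $|\mathcal{O}\setminus\mathcal{O}_*|\le\sum_{0\ne k\in\mathbb{Z}^n}|R_{k,\nu(k)+1}|$.

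\emph{Uniformity of the normal form.} By the iteration lemma together with Lemma~\ref{Lm1}, along the iteration the internal frequency is held fixed, $\omega_\nu\equiv\omega$ (so $L_{k0,\nu}=L_{k0}$), while $M_\nu$, and hence $\mathcal{A}_{1,\nu}$, $\mathcal{A}_{2,\nu}$, $L_{k1,\nu}$, $L_{k2,\nu}$, are $C^N$-perturbations of the initial data of size $\ll$ the ``margins'' appearing in $(D)$, $(M1)$, $(M2)$, uniformly in $\nu$. Since rank conditions persist quantitatively under such small perturbations, the iterate data satisfy $(D)$, $(M1)$, $(M2)$ with halved constants, uniformly in $\nu$. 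It therefore suffices to estimate, for each fixed $k\ne0$, the measure of a set of the form
\[
\mathcal{R}_k:=\Bigl\{\lambda:|L_{k0}(\lambda)|<\tfrac{c\gamma}{|k|^{\tau+1}}\Bigr\}\cup\Bigl\{\lambda:\mathcal{A}_1^*\mathcal{A}_1<\bigl(\tfrac{c\gamma}{|k|^\tau}\bigr)^2 I\Bigr\}\cup\Bigl\{\lambda:\mathcal{A}_2^*\mathcal{A}_2<\bigl(\tfrac{c\gamma}{|k|^\tau}\bigr)^2 I\Bigr\}
\]
(with $\mathcal{A}_1,\mathcal{A}_2$ standing for the relevant iterate, which still obeys the nondegeneracy conditions).

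\emph{Sub-level estimates and summation.} For the first piece, $(D)$ and compactness of the unit sphere give a $k$-independent $\rho_0>0$ with $\max_{|\alpha|\le N}\bigl|\langle k/|k|,\partial_\lambda^\alpha\omega(\lambda)\rangle\bigr|\ge\rho_0$ on $\mathcal{O}$; the R\"ussmann-type measure lemma (cf.\ \cite{Russmann2,XuJ,Cheng}) applied to $\lambda\mapsto\langle k/|k|,\omega(\lambda)\rangle$ bounds $\{\lambda:|L_{k0}|<c\gamma/|k|^{\tau+1}\}$ by $c(\gamma/|k|^{\tau+1})^{1/(N+1)}$. For the matrix pieces one homogenizes in $|k|$: $\mathcal{A}_i^*\mathcal{A}_i<(c\gamma/|k|^\tau)^2 I$ reads $(\mathcal{A}_i/|k|)^*(\mathcal{A}_i/|k|)<(c\gamma/|k|^{\tau+1})^2 I$, and $\mathcal{A}_i/|k|$ has $O(1)$ structure whose $\lambda$-dependence is controlled, via $(M1)$, $(M2)$ --- equivalently, by Remark~\ref{equi}, via the scalar combinations $\langle k,\omega\rangle$ and $\langle k,\omega\rangle-\hat\lambda_i-\hat\lambda_j$ --- by a uniform-in-$k$ lower bound on its highest-order ($\le N$) $\lambda$-derivatives; the vector/matrix form of the same lemma then bounds each matrix piece by $c(\gamma/|k|^{\tau+1})^{1/(N+1)}$ as well. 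Altogether $|R_{k,\nu(k)+1}|\le|\mathcal{R}_k|\le c(\gamma/|k|^{\tau+1})^{1/(N+1)}$, so
\[
|\mathcal{O}\setminus\mathcal{O}_*|\ \le\ c\,\gamma^{\frac{1}{N+1}}\sum_{0\ne k\in\mathbb{Z}^n}|k|^{-\frac{\tau+1}{N+1}}\ =\ O\bigl(\gamma^{\frac{1}{N+1}}\bigr),
\]
the series being finite exactly because $\tau>n(N+1)-1$.

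\emph{Main obstacle.} The delicate point is the matrix sub-level estimate. Unlike $L_{k0}$, the matrices $\mathcal{A}_1$, $\mathcal{A}_2$ mix the $k$-dependent, unbounded quantity $\langle k,\omega\rangle$ on the diagonal blocks with the fixed blocks of $M$, whose eigenvalues are only $O(\mu)$ and need not be small compared with the threshold $\gamma/|k|^\tau$; one must therefore pass from ``$\mathcal{A}_i^*\mathcal{A}_i<(c\gamma/|k|^\tau)^2 I$'' to a genuine scalar sub-level statement (e.g.\ via $|\det\mathcal{A}_i|$, using the uniform boundedness of $\mathcal{A}_i$), verify that the structural nondegeneracy $(M1)$--$(M2)$ survives the homogenization by $|k|$ with constants uniform in $k$ --- gluing the large-$|k|$ regime, where it collapses to $(D)$, to the bounded-$|k|$ regime by compactness --- and uniform in the KAM step $\nu$, and keep track of the extra factor $|k|^{-1}$ that this produces, which is exactly what renders the series above summable under the stated lower bound on $\tau$. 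The scalar sub-level estimate and the bookkeeping in the remaining steps are routine and follow, e.g., \cite{Li,Li7}.
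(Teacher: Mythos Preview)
Your skeleton --- decompose $\mathcal{O}\setminus\mathcal{O}_*$ into the resonant blocks $R_{k,\nu+1}$, bound each block by a R\"ussmann-type sub-level estimate driven by the nondegeneracy hypotheses, and sum over $k$ --- is the same as the paper's, and your treatment of the scalar piece $\{|L_{k0}|<\cdots\}$ via $(D)$ and compactness of the sphere is essentially what the paper does. The summability check $(\tau+1)/(N+1)>n$ matches the stated threshold $\tau>n(N+1)-1$.

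The gap is in the matrix pieces. You propose to reduce $\{\mathcal{A}_i^*\mathcal{A}_i<(c\gamma/|k|^\tau)^2 I\}$ to a scalar sub-level set either (a) via Remark~\ref{equi}, or (b) via $|\det\mathcal{A}_i|$, and then declare the rest ``routine'' by reference to \cite{Li,Li7}. Neither route is available here. Route (a) requires $MJ$ diagonalizable and only connects $(M1'')$--$(M2'')$ to $(M1')$--$(M2')$; the lemma assumes only $(M1)$--$(M2)$, which are column-rank conditions on the derivatives of $\mathcal{A}_i$ and are strictly more general (Corollary~\ref{cor.} goes only one way). Route (b) would need a uniform lower bound on some $\partial_\lambda^\alpha\det\mathcal{A}_i$, and there is no reason $(M1)$--$(M2)$ --- which concern the columns, not the determinant --- should supply this; moreover passing through $|\det|$ costs a power equal to the matrix size and would force a much larger $\tau$. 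Finally, \cite{Li,Li7} do not contain a measure estimate under hypotheses of the form $(M1)$--$(M2)$; this is precisely the new content of the present paper, so the deferral is unjustified.

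What the paper actually does for the matrix pieces is different and concrete. It Taylor-expands $\mathcal{A}_{i,\nu}(\lambda)$ about a base point $\lambda_0$ and writes $\mathcal{A}_{i,\nu}(\lambda)=\tilde{\mathcal{A}}_{i,\nu}\,\Theta(\hat\lambda)$, where $\tilde{\mathcal{A}}_{i,\nu}$ collects all $\partial_\lambda^\alpha\mathcal{A}_{i,\nu}(\lambda_0)$ for $|\alpha|\le N$ and $\Theta$ is a block-monomial matrix in $\hat\lambda=\lambda-\lambda_0$. Condition $(M1)$ (resp.\ $(M2)$) says exactly that one can permute columns so that $\tilde{\mathcal{A}}_{i,\nu}Q=(A,B)$ with $A$ square nonsingular; the special structure of $\Theta$ then forces the corresponding block $\tilde\Theta_1$ to be diagonal with entries $\ge(\min_j|\hat\lambda_j|)^{N}$ in modulus. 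Combining this with the Poincar\'e separation theorem and the multi-scale eigenvalue Lemma~\ref{Eigenvalue} in Appendix~\ref{Inverse} yields the pointwise lower bound
\[
\mathcal{A}_{i,\nu}^*\mathcal{A}_{i,\nu}\ \ge\ c\,|k|^2\,\min_{\hat\iota_1,\hat\iota_2}\{\varepsilon_{\hat\iota_1}^2,\mu_{\hat\iota_2}^2\}\,\bigl(\min_j|\hat\lambda_j|\bigr)^{2N+2}\,I,
\]
from which the measure of the bad set follows by a one-dimensional slab argument, giving $|\check R_{k,\nu+1}|,|\breve R_{k,\nu+1}|\le c\,\gamma^{1/(N+1)}|k|^{-(\tau-1)/(N+1)}$. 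A finite cover handles the $\lambda_0$-dependence. This linear-algebra extraction of a nonsingular submatrix directly from the column-rank hypotheses is the step your sketch is missing; once it is in place, the rest of your outline goes through.
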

\begin{proof}
Rewrite $
R_{k, \nu+1} = \hat{R}_{k, \nu+1} \bigcup \check{R}_{k, \nu+1} \bigcup \breve{R}_{k, \nu+1},$
where
\begin{eqnarray*}
\hat{R}_{k, \nu+1} &=& \{\lambda \in \mathcal{O}_\nu: |L_{k0,\nu}|< \frac{\min\limits_{0\leq j\leq m}\{\varepsilon_j\}\gamma_\nu}{|k|^{\tau+1}}, K_\nu< |k| \leq K_{\nu+1}\},\\
\check{R}_{k, \nu+1}&=& \{\lambda \in \mathcal{O}_\nu: \mathcal{A}_{1,\nu}^* \mathcal{A}_{1,\nu} < (\frac{\min\limits_{0\leq\hat{\iota}_1 \leq m, 0\leq\hat{\iota}_2 \leq l}\{\varepsilon_{\hat{\iota}_1}, \mu_{\hat{\iota}_2}\} \gamma_\nu}{|k|^\tau})^2 I_{n + 2m},\\
&~&~~~~~~~~~ K_\nu< |k| \leq K_{\nu+1}\},\\
\breve{R}_{k, \nu+1} &=&\{\lambda \in \mathcal{O}_\nu: \mathcal{A}_{2,\nu}^* \mathcal{A}_{2,\nu} < (\frac{\min\limits_{0\leq\hat{\iota}_1 \leq m, 0\leq\hat{\iota}_2 \leq l}\{\varepsilon_{\hat{\iota}_1}, \mu_{\hat{\iota}_2} \} \gamma_\nu}{|k|^\tau})^2 I_{(n+ 2m)^2 },\\
&~&~~~~~~~~~ K_\nu< |k| \leq K_{\nu+1}\}.
\end{eqnarray*}
Since
\begin{eqnarray*}
\mathcal{O} \setminus \mathcal{O}_* &=&  \bigcup\limits_{\nu = 0}^\infty \bigcup\limits_{K_\nu < |k| \leq K_{\nu+1}} R_{k, \nu+1} =\bigcup\limits_{\nu = 0}^\infty \bigcup\limits_{K_\nu < |k| \leq K_{\nu+1}} (\hat{R}_{k, \nu+1} \bigcup \check{R}_{k, \nu+1} \bigcup \breve{R}_{k, \nu+1}),
\end{eqnarray*}
we will estimate $|\hat{R}_{k, \nu+1}|,$  $|\check{R}_{k, \nu+1}|$ and $|\breve{R}_{k, \nu+1}|$ first.

Let $L_{k0,\nu} = |k|\langle \varsigma, \omega_\nu(\lambda)\rangle$, where $\varsigma = \frac{k}{|k|}\in S^n$, $S^n$ is a $n$-dimensional ball. According to Taylor series, for given $\lambda_0\in \mathcal{O}_\nu$,
\begin{eqnarray*}
{L}_{k0, \nu}= |k|\varsigma^T \Omega_\nu (\lambda_0) \tilde{\lambda},
\end{eqnarray*}
where $\Omega_\nu (\lambda_0) = \big( \omega_\nu(\lambda_0), \cdots, \partial_\lambda^\alpha \omega_\nu(\lambda_0), \int_0^1 (1-t)^{|\alpha+1|} \partial_\lambda^ {\alpha+1}\omega_\nu(\lambda_0+ t \lambda) \big)$, $\hat{\lambda} = \lambda - \lambda_0 = (\hat{\lambda}_1, \cdots, \hat{\lambda}_n),$ $\tilde{\lambda}=(1, \hat{\lambda}, \cdots, \hat{\lambda}^\alpha, \hat{\lambda}^{\alpha+1})$.
Let $Q_{\lambda_0, \nu}= (q_{ij})_{\breve{\iota}\times \breve{\iota}}$ be a matrix such that $\Omega_\nu(\lambda_0) Q_{\lambda_0, \nu}$ means that only some columns of $\Omega_\nu(\lambda_0)$ are changed.
Using condition $(D)$, $rank \Omega_\nu(\lambda_0) = n$ for $\lambda_0\in \mathcal{O}_\nu\subset \mathcal{O}$, i.e., there is an matrix $Q_{\lambda_0,\nu}= (q_{ij})_{\breve{\iota}\times \breve{\iota}}$ such that $\Omega_\nu(\lambda_0)Q_{\lambda_0,\nu} = \big(A_\nu (\lambda_0), B_\nu(\lambda_0)\big)$, where $A_\nu(\lambda_0) = (a_{ij})_{n\times n}$ is nonsingular. Denote $\mathcal{O}_{\lambda_0,\nu}$ the neighborhood of $\lambda_0$ and $\bar{\mathcal{O}}_{\lambda_0,\nu}$ the closure of $\mathcal{O}_{\lambda_0,\nu}$. Then $\det A_\nu(\lambda) \neq 0$ for $\lambda \in \bar{\mathcal{O}}_{\lambda_0, \nu}$. Therefore, there is an orthogonal matrix $Q_{\lambda_0,\nu}$ such that $\Omega_\nu(\lambda) Q_{\lambda_0, \nu} = (A_{\nu}(\lambda), B_{\nu}(\lambda))$ for $\lambda\in \bar{\mathcal{O}}_{\lambda_0, \nu}$, where $\det A_{\nu}(\lambda) \neq 0$ on $\bar{\mathcal{O}}_{\lambda_0, \nu}.$
Let $\check{\lambda}_{1, \nu}\leq \cdots \leq\check{\lambda}_{n,\nu}$ be the eigenvalues of $(A_{\nu}(\lambda)A_{\nu}^* (\lambda)+ B_{\nu}(\lambda)B_{\nu}^* (\lambda))$. Since $rank (A_{\nu}(\lambda)A_{\nu}^* (\lambda)+ B_{\nu}(\lambda)B_{\nu}^* (\lambda)) = rank (A_{\nu}(\lambda), B_{\nu}(\lambda))$ (\cite{Horn}), there is a unitary $U_{\nu}$ and a real diagonal $V_{\nu} = diag(\check{\lambda}_{1,\nu}, \cdots, \check{\lambda}_{n,\nu})$ such that $(A_{\nu}(\lambda)A_{\nu}^* (\lambda)+ B_{\nu}(\lambda)B_{\nu}^* (\lambda)) = U_{\nu} V_{\nu} U_{\nu}^*$. Therefore, using Poincar\'{e} separation theorem and Lemma \ref{Eigenvalue} in Appendix \ref{Inverse},
\begin{eqnarray*}
\varsigma^* (A_{\nu}(\lambda)A_{\nu}^* (\lambda)+ B_{\nu}(\lambda)B_{\nu}^* (\lambda)) \varsigma &=& \varsigma^* (A_{\nu}(\lambda)A_{\nu}^* (\lambda)+ B_{\nu}(\lambda)B_{\nu}^* (\lambda)) \varsigma\\
 &=& \varsigma^*U_{\nu}  diag(\check{\lambda}_{1,\nu}, \cdots, \check{\lambda}_{n,\nu}) U_{\nu}^* \varsigma\\
 &\geq& \varsigma^*U_{\nu}  \check{\lambda}_{1,\nu} I_n U_{\nu}^*\varsigma\\
 &\geq& \check{\lambda}_{1, \nu}\\
 &\geq&c\min\limits_{0\leq j\leq m}\{\varepsilon_j^2\},
\end{eqnarray*}
where $c$ is positive and depends on the elements of $\Omega_\nu(\lambda)$.
Since the nonzero eigenvalues of $\left(
                                    \begin{array}{cc}
                                      A_{\nu}^T (\lambda) \varsigma\varsigma^T A_{\nu}(\lambda) & A_{\nu}^T (\lambda) \varsigma\varsigma^T B_{\nu}(\lambda) \\
                                      B_{\nu}^T (\lambda) \varsigma\varsigma^TA_{\nu}(\lambda) & B_{\nu}^T(\lambda) \varsigma\varsigma^T B_{\nu}(\lambda) \\
                                    \end{array}
                                  \right)
$ and $\varsigma^T (A_{\nu}(\lambda)A_{\nu}^T (\lambda)+ B_{\nu}(\lambda)B_{\nu}^T (\lambda))\varsigma$ are the same, there is an unitary matrix $\mathcal{U}_{\nu}(\lambda)$ such that
\begin{eqnarray*}
\left(
                                    \begin{array}{cc}
                                      A_{\nu}^T (\lambda) \varsigma\varsigma^T A_{\nu}(\lambda) & A_{\nu}^T (\lambda) \varsigma\varsigma^T B_{\nu}(\lambda) \\
                                      B_{\nu}^T (\lambda) \varsigma\varsigma^TA_{\nu}(\lambda) & B_{\nu}^T(\lambda) \varsigma\varsigma^T B_{\nu}(\lambda) \\
                                    \end{array}
                                  \right) = \mathcal{U}_{\nu}(\lambda) diag (0, \cdots,0, \check{\lambda}_\nu) \mathcal{U}_{\nu}^*(\lambda),
\end{eqnarray*}
where $\check{\lambda}_\nu = \varsigma^*U_{\nu}  diag(\check{\lambda}_{1,\nu}, \cdots, \check{\lambda}_{n,\nu}) U_{\nu}^* \varsigma.$ Let $a$ be the dimension of $\mathcal{U}_{\nu}$ and $\mathcal{U}_\nu = (u_{j_1i, \nu})_{a\times a}$. Using Hadamard's inequality (\cite{Horn}), $\det \mathcal{U}_{\nu}^* \mathcal{U}_{\nu} = \det I \leq \sum\limits_{0\leq j_1\leq a}u_{j_1i, \nu}^2$, i.e., there is a $i$ such that $\sum\limits_{0\leq j_1\leq a}u_{j_1i, \nu}^2\geq \frac{1}{2}$.  Denote $(\mathcal{U}_{\nu}^* Q_{\lambda_0,\nu} )_i$ the $i-$th row of $\mathcal{U}_{\nu}^* Q_{\lambda_0,\nu}$. Therefore, $||(\mathcal{U}_{\nu}^* Q_{\lambda_0,\nu}^{-1} \tilde{\lambda})_i||_2 = ||(\mathcal{U}_{\nu}^* Q_{\lambda_0, \nu}^{-1} )_i \tilde{\lambda} ||_2 \geq \sum\limits_{0\leq j_1\leq a}u_{j_1i,\nu}^2(\min\limits_{1\leq j\leq n} |\hat{\lambda}_j|)^{2N+2}$. Hence
\begin{eqnarray}
\label{Lk0} |{L}_{k0, \nu}^* {L}_{k0, \nu}| &=& |k|^2 |\tilde{\lambda}^T Q_{\lambda_0, \nu}Q_{\lambda_0, \nu}^{-1} \Omega_\nu^T \varsigma\varsigma^T \Omega_\nu Q_{\lambda_0, \nu}  Q_{\lambda_0, \nu}^{-1} \tilde{\lambda}|\\
\nonumber&=& |k|^2 |\tilde{\lambda}^T Q_{\lambda_0, \nu} \left(
                                          \begin{array}{cc}
                                            A_{\nu}^T (\lambda) \varsigma \varsigma^T A_{\nu}(\lambda) & A_{\nu}^T (\lambda) \varsigma\varsigma^T B_{\nu}(\lambda) \\
                                            B_{\nu}^T (\lambda) \varsigma\varsigma^T A_{\nu}(\lambda) & B_{\nu}^T(\lambda) \varsigma\varsigma^T B_{\nu}(\lambda) \\
                                          \end{array}
                                        \right)Q_{\lambda_0,\nu}^{-1} \tilde{\lambda}|\\
\nonumber&=&|k|^2 |\tilde{\lambda}^T Q_{\lambda_0, \nu}\mathcal{U}_{\nu} (\lambda) diag (0,\cdots, 0, \check{\lambda}_{\nu}) \mathcal{U}_{\nu}^*(\lambda)Q_{\lambda_0, \nu}^{-1} \tilde{\lambda}|\\
\nonumber&\geq& |k|^2 \check{\lambda}_{\nu} |(\mathcal{U}_{\nu}^*(\lambda) Q_{\lambda_0,\nu}^{-1} \tilde{\lambda})_{i}|\\
\nonumber&\geq& |k|^2 c\min\limits_{0\leq j\leq m}\{\varepsilon_j^2\} (\min\limits_{1\leq j \leq n} |\hat{\lambda}_i|)^{2N+2}.
\end{eqnarray}
Here, we use the following facts: for given matrix $A$, $B$ and orthogonal matrix $P$, $tr AB = tr BA$ and $tr P^{-1} A P = tr A$.
 Then
\begin{eqnarray*}
~&~& |\{ \lambda\in \mathcal{O}_\nu \bigcap \bar{\mathcal{O}}_{\lambda_0, \nu}: |{L}_{k0, \nu}^* {L}_{k0, \nu}| \leq \frac{\min\limits_{0\leq j\leq m} \{\varepsilon_j^2\}\gamma_\nu^2}{|k|^{2\tau}},K_{\nu}< |k|\leq K_{\nu+1} \}| \\
&<& |\{ \lambda\in \mathcal{O}_\nu \bigcap \bar{\mathcal{O}}_{\lambda_0, \nu}:  c\big(\min\limits_{1\leq j\leq n}|\hat{\lambda}_j|\big)^{2N + 2} \leq \frac{\gamma_\nu^2}{|k|^{2(\tau-1)}}, K_{\nu}< |k|\leq K_{\nu+1} \}|\\
 &\leq& c  D^{n-1} \frac{\gamma^{\frac{1}{N+1}}}{|k|^{\frac{\tau-1}{N+1}}},
\end{eqnarray*}
where $D$ is the exterior diameter of $\bar{\mathcal{O}}_{\lambda_0,\nu}$ with respect to the maximum norm, $n$ is the dimension of $\mathcal{O}_\nu$, $c$ is positive and depends on the elements of $\Omega_\nu(\lambda)$. Further, there are finite sets, $\bar{\mathcal{O}}_{\lambda_i, \nu}$, $1\leq i \leq \tilde{\iota}$, such that $\mathcal{O}_\nu\subset \bigcup\limits_{i=1}^{\tilde{\iota}} \bar{\mathcal{O}}_{\lambda_i, \nu}$ and
\begin{eqnarray*}
|{L}_{k0, \nu}^* (\lambda){L}_{k0, \nu}(\lambda)| > |k|^2 c\min\limits_{0\leq j\leq m}\{\varepsilon_j^2\} \big(\min\limits_{1\leq j\leq n}|\hat{\lambda}_j|\big)^{2N + 2} ~for ~\lambda\in \bar{\mathcal{O}}_{\lambda_i, \nu}.
\end{eqnarray*} Therefore,
\begin{eqnarray*}
|S_1| = |\{\lambda\in \mathcal{O}_\nu: |{L}_{k0,\nu}| \leq \frac{\min\limits_{0\leq j\leq m}\{\varepsilon_j\}\gamma_\nu}{|k|^\tau}, K_{\nu}< |k|\leq K_{\nu+1}\}|< c D^{n-1} \frac{\gamma_0^{\frac{1}{N+1}}}{|k|^{\frac{\tau-1}{N+1}}},
\end{eqnarray*}
where $c$ depends on $\mathcal{O}$, $D$, $n$ and $\Omega_{\nu}$.

According to Taylor series, we have $\mathcal{A}_{1,\nu}(\lambda) = \tilde{ \mathcal{A}}_{1,\nu} \Theta(\hat{\lambda}),$
where $\hat{\lambda}= \lambda - \lambda_0$, and
\begin{eqnarray*}
\tilde{ \mathcal{A}}_{1,\nu} &=&(\mathcal{A}_{1,\nu}(\lambda_0), \partial_\lambda \mathcal{A}_{1,\nu}(\lambda_0), \cdots, \partial_\lambda^\alpha \mathcal{A}_{1,\nu}(\lambda_0), \int_0^1 (1- t)^{|\alpha+1|} \partial_\lambda^{\alpha+1} \mathcal{A}_{1,\nu}(\lambda_0 + t \hat{\lambda}) dt),\\
\Theta(\hat{\lambda}) &=& (I_{n+2m},  I_{n+2m}\otimes \hat{\lambda}, \cdots, I_{n+2m} \otimes \hat{\lambda}^\alpha,  I_{n+2m} \otimes \hat{\lambda}^{\alpha+1})^T.
\end{eqnarray*}
By condition ${(M1)}$, there is an orthogonal matrix $Q_{\lambda_0,\nu}^1$ exchanging some column vector of $\tilde{ \mathcal{A}}_{1,\nu}(\lambda_0)$ such that $\tilde{ \mathcal{A}}_{1,\nu}(\lambda_0) Q_{\lambda_0,\nu}^1 = (A_\nu^1(\lambda_0), B_\nu^1(\lambda_0)),$
where $A_\nu^1(\lambda_0)$ is a $(n+2m) \times (n+2m)$ nonsingular matrix. Due to the continuity of determinant, there is a neighborhood $\mathcal{O}_{\lambda_0,\nu}$ of $\lambda_0$ with $\bar{\mathcal{O}}_{\lambda_0,\nu} \subset R^n$, such that $\det A_\nu^1(\lambda) \neq 0 ~for~ \lambda \in \bar{\mathcal{O}}_{\lambda_0,\nu}.$
In other words, there is an orthogonal matrix $Q_{\lambda_0,\nu}^1$ such that $\det A_\nu^1(\lambda) \neq 0 ~for~ \lambda \in \bar{\mathcal{O}}_{\lambda_0,\nu},$
where $\det A_\nu^1(\lambda) \neq 0$ on $\bar{\mathcal{O}}_{\lambda_0,\nu}$. Moreover,
\begin{eqnarray*}
\mathcal{A}_{1,\nu}(\lambda) &=& \tilde{\mathcal{A}}_{1,\nu}(\lambda_0)  Q_{\lambda_0,\nu}^1{ Q_{\lambda_0,\nu}^1}^{-1} \Theta = (A_\nu^1(\lambda_0), B_\nu^1(\lambda_0)) \tilde{\Theta},\\
\overline{\mathcal{A}_{1,\nu}^T}(\lambda) &=& \overline{\tilde{\Theta}^T} \left(
                                                     \begin{array}{c}
                                                      \overline{{ {A_\nu^1}^T}} \\
                                                       \overline{{ {B_\nu^1}^T}} \\
                                                     \end{array}
                                                   \right),\ \ \ \ \ \tilde{\Theta} = {Q_{\lambda_0,\nu}^1}^{-1} \Theta = \left(
                                                     \begin{array}{c}
                                                       \tilde{\Theta}_1 \\
                                                       \tilde{\Theta}_2 \\
                                                     \end{array}
                                                   \right),
\end{eqnarray*}
where $\tilde{\Theta}_1$ is a $(n+2m) \times (n+2m) $ matrix. We claim that $\tilde{\Theta}_1$ is nonsingular. In fact, there is only one nonzero element on each column of $\tilde{\Theta}_1$ according to the selection of $c_i$ in condition ${(M1)}$ and only one nonzero element on each row of $\tilde{\Theta}_1$ due to the definition of $\tilde{\Theta}$. Further, $\overline{\tilde{\Theta}_1^T} \tilde{\Theta}_1$ is a diagonal matrix and the corresponding elements on diagonal is greater than or equal to $(\min\limits_{1\leq j\leq n} |\hat{\lambda}_j|^{2N+2}),$ where $\hat{\lambda} = (\hat{\lambda}_1, \cdots, \hat{\lambda}_{n}).$ By condition ${(M1)}$ and Sylvester's law, $rank \left(
       \begin{array}{cc}
         \overline{{A_\nu^1}^T} A_\nu^1 & \overline{{A_\nu^1}^T} B_\nu^1 \\
        \overline{{B_\nu^1}^T} A_\nu^1 & \overline{{B_\nu^1}^T} B_\nu^1\\
       \end{array}
     \right)=n+ 2m.$
Then there is a unitary matrix $U_\lambda$ such that $$ \left(
       \begin{array}{cc}
         \overline{{A_\nu^1}^T} A_\nu^1 & \overline{{A_\nu^1}^T} B_\nu^1 \\
        \overline{{B_\nu^1}^T} A_\nu^1 & \overline{{B_\nu^1}^T} B_\nu^1\\
       \end{array}
     \right)  = U_\lambda \left(
                           \begin{array}{cc}
                             diag (\lambda_1, \cdots, \lambda_{n+2m}) & 0 \\
                             0 & 0 \\
                           \end{array}
                         \right)U_\lambda^*,$$
where $\lambda_i$, $1 \leq i\leq n+2m$, is nonvanishing eigenvalue of $\left(
       \begin{array}{cc}
         \overline{{A_\nu^1}^T} A_\nu^1 & \overline{{A_\nu^1}^T} B_\nu^1 \\
        \overline{{B_\nu^1}^T} A_\nu^1 & \overline{{B_\nu^1}^T} B_\nu^1\\
       \end{array}
     \right).$ Similarly to $(\ref{Lk0})$, we have
\begin{eqnarray*}
\mathcal{A}_{1,\nu}^* \mathcal{A}_{1,\nu} &=& |k|^2\overline{\tilde{\Theta}^T} \left(
       \begin{array}{cc}
         \overline{{A_\nu^1}^T} A_\nu^1 & \overline{{A_\nu^1}^T} B_\nu^1 \\
         \overline{{B_\nu^1}^T} A_\nu^1 & \overline{{B_\nu^1}^T} B_\nu^1\\
       \end{array}
     \right) \tilde{\Theta}\\
&=& |k|^2\overline{\tilde{\Theta}^T} U_{\lambda}\left(
       \begin{array}{cc}
         diag(\lambda_1, \cdots, \lambda_{n+2m}) & 0 \\
         0 & 0\\
       \end{array}
     \right) U_{\lambda}^* \tilde{\Theta}\\
&\geq&|k|^2 (\overline{\tilde{\Theta}_1^T},\overline{\tilde{\Theta}_2^T}) U_{\lambda} \left(
       \begin{array}{cc}
         \min\limits_{i} \lambda_i I_{n+2m} & 0 \\
         0 & 0\\
       \end{array}
     \right) U_{\lambda}^* \left(
               \begin{array}{c}
                  \tilde{\Theta}_1 \\
                 \tilde{\Theta}_2 \\
               \end{array}
             \right) \\
&\geq&|k|^2 c\min\limits_{0\leq\hat{\iota}_1 \leq m, 0\leq\hat{\iota}_2 \leq l}\{\varepsilon_{\hat{\iota}_1}^2, \mu_{\hat{\iota}_2}^2\} (\min\limits_{1\leq j \leq n} |\hat{\lambda}_i|)^{2N+2} I_{n+2m}.
\end{eqnarray*}
Therefore,
\begin{eqnarray*}
 &~& |\{ \lambda \in \mathcal{O}_{\nu}\cap \bar{\mathcal{O}}_{\lambda_0,\nu}:  \mathcal{A}_{1,\nu}^* \mathcal{A}_{1,\nu} < (\frac{\min\limits_{0\leq\hat{\iota}_1 \leq m, 0\leq\hat{\iota}_2 \leq l}\{\varepsilon_{\hat{\iota}_1}, \mu_{\hat{\iota}_2}\}  \gamma_\nu}{|k|^\tau})^2 I_{n + 2m} \}| \\
 &<& |\{\lambda \in \mathcal{O}_{\nu}\cap \bar{\mathcal{O}}_{\lambda_0,\nu}: c (\min\limits_{0\leq j \leq n} |\hat{\lambda}_j|^{2N+2}) I_{n+2m} < (\frac{ \gamma_\nu}{|k|^{\tau-1}})^2 I_{n + 2m} \}| \\
 &\leq& c D^{n-1} \frac{ \gamma_\nu^{\frac{1}{N +1}}}{|k|^{\frac{\tau-1}{N +1 }}},
\end{eqnarray*}
where $D$ denotes the exterior diameter of $\mathcal{O}_\nu$ with respect to the maximum norm, $n$ the dimension of the ambient space, $c$ is positive and depends on $\mathcal{O}_\nu$, $D$, $n$ and $\tilde{ \mathcal{A}}_{1,\nu}$. Since $\mathcal{O}_\nu$ is a closed bounded set, there are finite sets, $\bar{\mathcal{O}}_{\lambda_i, \nu}$, $0\leq i\leq n_1$, such that $\mathcal{O}_\nu \subset \bigcup\limits_{i=1}^{n_1} \bar{\mathcal{O}}_{\lambda_i,\nu}$ and $\mathcal{A}_{1,\nu}^*\mathcal{A}_{1,\nu} \geq c \min\limits_{0\leq\hat{\iota}_1 \leq m, 0\leq\hat{\iota}_2 \leq l}\{\varepsilon_{\hat{\iota}_1}^2, \mu_{\hat{\iota}_2}^2\} (\min\limits_{j} |\hat{\lambda}_j|^{2N+2}) I_{n+2m} ~ on ~\bar{\mathcal{O}}_{\lambda_i, \nu}.$ Then $\mathcal{A}_{1,\nu}^*\mathcal{A}_{1,\nu} \geq c \min\limits_{0\leq\hat{\iota}_1 \leq m, 0\leq\hat{\iota}_2 \leq l}\{\varepsilon_{\hat{\iota}_1}^2, \mu_{\hat{\iota}_2}^2\} (\min\limits_{j} |\hat{\lambda}_j|^{2N+2}) I_{n+2m}~ on ~\mathcal{O}_{\nu}.$
 Therefore, $|\check{R}_{k, \nu+1}|=|\{ \lambda \in \mathcal{O}_\nu:  \mathcal{A}_1^* \mathcal{A}_1 \leq (\frac{\min\limits_{0\leq\hat{\iota}_1 \leq m, 0\leq\hat{\iota}_2 \leq l}\{\varepsilon_{\hat{\iota}_1}, \mu_{\hat{\iota}_2}\}  \gamma}{|k|^\tau})^2 I_{n + 2m} \}|\leq c  D^{n-1} \frac{ \gamma^{\frac{1}{N +1}}}{|k|^{\frac{\tau-1}{N +1 }}},$
where $c$ is positive and depends on $\mathcal{O}$, $D$, $n$ and $\tilde{ \mathcal{A}}_{1,\nu}$.

Similarly, under condition ${(M2)}$, $$|\breve{R}_{k, \nu+1}| =
|\{ \lambda \in \mathcal{O}_\nu:  \mathcal{A}_{2,\nu}^* \mathcal{A}_{2,\nu} \leq (\frac{\min\limits_{0\leq\hat{\iota}_1 \leq m, 0\leq\hat{\iota}_2 \leq l}\{\varepsilon_{\hat{\iota}_1}, \mu_{\hat{\iota}_2}\}  \gamma_\nu}{|k|^\tau})^2 I_{(n + 2m)^2} \}| \leq c D^{n-1} \frac{ \gamma_\nu^{\frac{1}{N +1}}}{|k|^{\frac{\tau-1}{N +1 }}},$$
where $c$ depends on $\mathcal{O}$, $D$ and $n$.

Hence
\begin{eqnarray*}
|\mathcal{O} \setminus \mathcal{O}_*| &\leq&  |\bigcup\limits_{\nu = 0}^\infty \bigcup\limits_{K_\nu < |k| \leq K_{\nu+1}} R_{k, \nu+1}|\leq c O(\gamma^{\frac{1}{N +1}})\sum\limits_{\nu  =0}^\infty \sum\limits_{K_\nu < |k| \leq K_{\nu+1}} \frac{ 1}{|k|^{\frac{\tau-1}{N +1 }}}\\
&=& O(\gamma^{\frac{1}{N +1}}),
\end{eqnarray*}
as $\gamma\rightarrow 0$, where $\tau > n(N+1)+1.$

\end{proof}

\section{Normal Form}\setcounter{equation}{0} \label{041}
Consider the persistence of lower dimensional invariant tori for nearly integrable multi-scale Hamiltonian, for which there is no order relationship between the tangent frequency and the normal, which leads to the perturbation is not small enough to proceed KAM iteration and we need a KAM step on a neighbourhood of the origin that depends on $\varepsilon$.

In Section \ref{038}, we show KAM steps on $D(r,s)\times \mathcal{O}$ for the following nearly integrable multi-scale Hamiltonian systems:
\begin{eqnarray}
\nonumber H(x,y,z,\lambda)  = e+ \langle \omega , y\rangle + \frac{1}{2}\langle \left(
                                                                              \begin{array}{c}
                                                                                y \\
                                                                                z \\
                                                                              \end{array}
                                                                            \right),
          M  \left(
                                                                              \begin{array}{c}
                                                                                y \\
                                                                                z \\
                                                                              \end{array}
                                                                            \right)\rangle + h + \varepsilon P(x,y,z,\lambda),
\end{eqnarray}
where the definitions of $\omega,$ $M(\lambda)$ and $h$ are the same as those in (\ref{701}).
Moreover,  $|P(x,y,z,\lambda)|_{D(r,s) \times \mathcal{O}} \leq \gamma^{3b} s^2 \mu.$
And to insure the solvable of homological equations, we need to require $s \leq\varepsilon^4$, which means $|P| = o(\varepsilon^8)$. Actually, after $\tilde{\nu}_* = [\frac{\log 9}{\log 1+ \frac{1-\sigma}{a} }]+1$ KAM steps, $|P_{\tilde{\nu}_*}| = O(\varepsilon^9)$, where $\mu^a = \varepsilon.$ Next, we will show this process.

Recall the initial Hamiltonian system (\ref{701})
defined on $(x,y,z,\lambda) \in D(r,s)\times \mathcal{O}  = \{(x,y,z): |Im x|< r, |y|<s = \varepsilon^4, |z|<s= \varepsilon^4\} \times \{\lambda: |\lambda|\leq \delta_1\}\subset T^n\times G \times \mathcal{O}\subset T^n \times R^n\times R^{2m} \times R^n$, $\delta_1$ is a given constant, $0<\varepsilon\leq \varepsilon_i, \mu_j\ll 1$, $0\leq i\leq m$, $0\leq j\leq l$. Let $\bar{\mathcal{O}} = \{\lambda, |\lambda|\leq \delta_1- \eta\},$ where $\eta$ is to be determined later.
Using Cauchy inequality, we have $|\partial_\lambda^l P_0(x,y,z,\lambda)| = |\varepsilon^2 \partial_\lambda^l  P(x,y,z,\lambda)| \leq \frac{\varepsilon^2}{\eta^{l_0}}.$

Let $P = \sum\limits_{\substack{\imath\in Z_+^n,\jmath\in Z_+^{2m},\\ k \in Z^n}} p_{k\imath\jmath} y^{\imath} z^{\jmath} e^{\sqrt{-1}\langle k, x\rangle}$ and
$R = \sum\limits_{\substack{\imath+\jmath\leq 2, |k| \leq K_+,\\ \imath\in Z_+^n,\jmath\in Z_+^{2m},  k \in Z^n}} p_{k\imath\jmath} y^{\imath} z^{\jmath} e^{\sqrt{-1}\langle k, x\rangle},$
where $K_1 = ([\log \frac{1}{\varepsilon^{\frac{1}{a}}}]+1)^{3\eta}$). Standardly, $|\partial_\lambda^l  R|_{D_\alpha \times \bar{\mathcal{O}}} \leq c \frac{\varepsilon^2}{\eta^{l_0}}$ and $|\partial_\lambda^l  (P- R)|_{D_\alpha \times \bar{\mathcal{O}}} \leq c \frac{\varepsilon^2\mu}{\eta^{l_0}},$
if
\begin{itemize}
\item[{(H1)}] $K_1 \geq \frac{8(n+ l_0)}{r - r_+}$,
\item[{(H2)}] $\int_{K_1}^\infty \lambda^{n + l_0} e^{-\lambda \frac{r - r_+}{8}} d \lambda \leq \mu.$
\end{itemize}

Under the time $1$-map $\phi_F^1$ of the flow generated by a Hamiltonian\\ $ F = \sum\limits_{\substack{\imath+\jmath\leq 2, 0< |k| \leq K_+,\\ \imath\in Z_+^n,\jmath\in Z_+^{2m},  k \in Z^n}} f_{k\imath\jmath} y^{\imath} z^{\jmath} e^{\sqrt{-1}\langle k, x\rangle},$
 Hamiltonian (\ref{701}) arrives at
\begin{eqnarray}
\label{H1}\bar{H}_1=H\circ \phi_F^1 =N +  R+ \{N, F\} + \bar{P}_1,
\end{eqnarray}
where $\{N, F\}$, $R_t$ and $J$ defined as those in (\ref{Eq425}) and (\ref{Eq326}), and
\begin{eqnarray}
\label{EQ12} \bar{P}_1&=&  \int_0 ^1 \{R_t,F\}\circ \phi_F^t dt + (P - R )\circ \phi_F^1.
\end{eqnarray}
Similarly, homological equation
\begin{eqnarray}\label{EQ13}
\{N, F\} + R - [R]= 0,
\end{eqnarray}
is solvable on
\begin{eqnarray*}
\mathcal{O}_1 &=& \{\lambda\in \mathcal{O}, |L_{k0}|\geq \frac{\min\limits_{0\leq\hat{\iota}_1 \leq m}\{\varepsilon_{\hat{\iota}_1}\}  \gamma}{|k|^\tau},\ \mathcal{A}_1^* \mathcal{A}_1 \geq (\frac{\min\limits_{\substack{0\leq\hat{\iota}_1 \leq m,\\ 0\leq\hat{\iota}_2 \leq l}}\{\varepsilon_{\hat{\iota}_1}, \mu_{\hat{\iota}_2}\} \} \gamma}{|k|^\tau})^2 I_{n + 2m},\\
&~&~~~~~~ \mathcal{A}_2^* \mathcal{A}_2 \geq (\frac{\min\limits_{\substack{0\leq\hat{\iota}_1 \leq m,\\ 0\leq\hat{\iota}_2 \leq l}}\{\varepsilon_{\hat{\iota}_1}, \mu_{\hat{\iota}_2}\}  \gamma}{|k|^\tau})^2 I_{(n+ 2m)^2}, 0<|k|\leq K_1\},
\end{eqnarray*}
where $\mathcal{A}_1$ and $\mathcal{A}_2$ have similar definitions as those in (\ref{Eq317}) and (\ref{Eq318}). Concretely, we have the following lemma.
\begin{lemma} The following hold for all $0< |k| \leq K_1$.
\begin{itemize}
\item[{(i)}]
Homology equations can be solved on $\mathcal{O}_1$ successively to obtain functions $f_{k00},$ $f_{k01},$ $f_{k10},$ $f_{k11},$ $f_{k20},$ $f_{k02},$ $0 <|k| \leq K_1$, which are smooth in $\lambda \in \mathcal{O}_1$ and analytic in $(y,z) \in D(s)$, and $\bar{f}_{kij}(\bar{y}, \bar{z}) = f_{-kij}(y,z),$
for all $0\leq |i|+|j|\leq 2$, $0 < |k|\leq K_1$, $(y,z)\in D(s).$ Moreover, on $D(s)\times \bar{\mathcal{O}}_1$,
$|\partial_\lambda^l f_{kij}|_{D(s)\times \bar{\mathcal{O}}_1} \leq \frac{\varepsilon \Gamma(r - r_+)}{\gamma \eta^{l_0}}, \ 0\leq i,\ j,\ i + j \leq 2.$
\item[{(ii)}] On $\hat{D}(s) \times \bar{\mathcal{O}}_1$, $|\partial_\lambda^l \partial_x^i \partial_{(y,z)}^j F| \leq \frac{\varepsilon\Gamma(r - r_+) }{ \eta^{l_0}},~~ |l| \leq l_0, |i|+ |j| \leq 2.$
\end{itemize}
\end{lemma}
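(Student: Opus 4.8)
The plan is to rerun, for this single preliminary ``normal-form'' step, the machinery of Section~\ref{038} (truncation, averaging transformation, homological equation), applied now to the oversized perturbation $\varepsilon^2 P$ of $(\ref{701})$, for which the only available bound is $|\partial_\lambda^l P_0|\le\varepsilon^2/\eta^{l_0}$ rather than $O(s^2)$, on the $\varepsilon$-dependent domain $D(r,s)$ with $s=\varepsilon^4$. First I would substitute the Fourier-Taylor series of $F$ and $R$ into the homological equation $\{N,F\}+R-[R]=0$ and collect the coefficients of $y^i z^j e^{\sqrt{-1}\langle k,x\rangle}$; this yields precisely the block-triangular system of Section~\ref{Homol.}, namely the scalar equation $(L_{k0}+\varpi)f_{k00}=p_{k00}$, the $(n+2m)$-dimensional equation of the form $(\ref{eq2})$ for $(f_{k10},f_{k01})$, and the $(n+2m)^2$-dimensional equation of the form $(\ref{eq3})$ for $(f_{k20},f_{k11},f_{k02})$, with $\varpi=\sqrt{-1}\langle k,\Delta\rangle$ and $\hat{h}_1,\hat{h}_2$ the usual corrections from $\partial_{(y,z)}h$, to be solved successively in that order. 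The only bookkeeping difference from $(\ref{eq1})$--$(\ref{eq3})$ is that the right-hand sides are $p_{kij}$ rather than $\varepsilon p_{kij}$, the factor $\varepsilon$ now sitting inside the size $\varepsilon^2$ of $R$.

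Next I would establish solvability on $\mathcal{O}_1$ exactly as in Lemma~\ref{Eq53}(i). By definition of $\mathcal{O}_1$ one has $|L_{k0}|\ge\min\{\varepsilon_{\hat{\iota}_1}\}\gamma/|k|^\tau$ and $\mathcal{A}_1^*\mathcal{A}_1,\ \mathcal{A}_2^*\mathcal{A}_2\ge(\min\{\varepsilon_{\hat{\iota}_1},\mu_{\hat{\iota}_2}\}\gamma/|k|^\tau)^2 I$. Since $|y|,|z|<s=\varepsilon^4$, the corrections $\varpi$, $\hat{h}_1$, $\hat{h}_2$ are $O(|k|s)$, $O(s)$, $O(s)$, so the Hermitian differences between the squared full coefficient matrices $(L_{k0}+\varpi)^*(L_{k0}+\varpi)$, $\mathcal{A}_3^*\mathcal{A}_3$, $\mathcal{A}^*\mathcal{A}$ and their principal parts $|L_{k0}|^2$, $\mathcal{A}_1^*\mathcal{A}_1$, $\mathcal{A}_2^*\mathcal{A}_2$ have operator norm $O(|k|s)=O(|k|\varepsilon^4)$, which is dominated by $\frac{1}{2}(\min\{\varepsilon_{\hat{\iota}_1},\mu_{\hat{\iota}_2}\}\gamma/|k|^\tau)^2$ thanks to $s^{1/2}K_1^{\tau+1}=o(\gamma)$ (automatic for small $\varepsilon$, since $s=\varepsilon^4$ and $K_1=([\log\varepsilon^{-1/a}]+1)^{3\eta}$). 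Weyl's inequality then yields $|L_{k0}+\varpi|\ge\frac{1}{2}\min\{\varepsilon_{\hat{\iota}_1}\}\gamma/|k|^\tau$ and $\mathcal{A}_3^*\mathcal{A}_3,\ \mathcal{A}^*\mathcal{A}\ge\frac{1}{2}(\min\{\varepsilon_{\hat{\iota}_1},\mu_{\hat{\iota}_2}\}\gamma/|k|^\tau)^2$ times the identity of the appropriate size, the bound for the $(n+2m)^2$-dimensional block being the content of Appendix~\ref{Nonsin}. Hence each of the three matrices is invertible with inverse of norm $\le c|k|^\tau/(\min\{\varepsilon_{\hat{\iota}_1},\mu_{\hat{\iota}_2}\}\gamma)$; the system is solvable on $\mathcal{O}_1$ by back-substitution, with each $f_{kij}$ analytic in $(y,z)\in D(s)$ (where the inverses are analytic) and smooth in $\lambda\in\mathcal{O}_1$, and the reality relation $\bar{f}_{kij}(\bar{y},\bar{z})=f_{-kij}(y,z)$ follows, as before, from $\overline{p_{-kij}}=p_{kij}$ together with the matching $\overline{(\cdot)_{-k}}=(\cdot)_k$ symmetry of the coefficient matrices.

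For the quantitative bounds, the key point is that $(\ref{701})$ is real analytic on a \emph{fixed} complex neighbourhood of $T^n\times G$ (with $G$ having nonempty interior), so Cauchy's estimate in $(y,z)$ gives $|\partial_\lambda^l p_{kij}|\le c\,\varepsilon^2 e^{-|k|(r-r_+)/8}/\eta^{l_0}$ for \emph{every} $i,j$ with $|i|+|j|\le 2$ --- no negative power of $s$ appearing, which is exactly why all the $f_{kij}$ carry one and the same bound here, in contrast to Lemma~\ref{Eq53}. Multiplying by the inverse bound $c|k|^\tau/(\varepsilon\gamma)$ and absorbing $|k|^\tau e^{-|k|(r-r_+)/8}\le|k|^\chi e^{-|k|(r-r_+)/8}\le\Gamma(r-r_+)$ yields $|\partial_\lambda^l f_{kij}|_{D(s)\times\bar{\mathcal{O}}_1}\le\varepsilon\Gamma(r-r_+)/(\gamma\eta^{l_0})$, the $\eta^{-l_0}$ arising from the standard Cauchy estimate in $\lambda$ as in Lemma~\ref{Eq53}. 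Finally, substituting these into $F=\sum_{|i|+|j|\le 2,\,0<|k|\le K_1}f_{kij}y^i z^j e^{\sqrt{-1}\langle k,x\rangle}$ and differentiating proves (ii): $\partial_x^i$ produces factors $(\sqrt{-1}k)^i$ whose moduli $|k|^{|i|}$ with $|i|\le l_0$ are again swallowed by $\Gamma$ (its exponent $\chi=(b+2)\tau+5l_0+10$ being large enough for this), while $y^i z^j$ and the residual $e^{\sqrt{-1}\langle k,x\rangle}$ stay bounded on $\hat{D}(s)$, giving $|\partial_\lambda^l\partial_x^i\partial_{(y,z)}^j F|\le\varepsilon\Gamma(r-r_+)/\eta^{l_0}$ there.

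The genuinely non-routine ingredient is the inverse estimate for the multi-scale matrix $\mathcal{A}$, specifically the lower bound $\mathcal{A}^*\mathcal{A}\ge\frac{1}{2}(\min\{\varepsilon_{\hat{\iota}_1},\mu_{\hat{\iota}_2}\}\gamma/|k|^\tau)^2 I_{(n+2m)^2}$ for the tensor-structured $(n+2m)^2$-dimensional block. Because $M=\sum_{j}\mu_j M_j$ is a genuine multi-scale matrix, blocks of $\varepsilon M^{-1}$ may blow up as $\varepsilon\to 0$, so $\mathcal{A}^{-1}$ cannot be controlled directly from a determinant or a naive block inversion; one is forced down to the $\mathcal{A}^*\mathcal{A}$ lower bound, and establishing it for $\mathcal{A}_2$ (then propagating it to $\mathcal{A}$ via Weyl) is precisely what the case analysis in Appendix~\ref{Nonsin} does, together with the spectral facts collected in Appendix~\ref{Inverse}. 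Everything else is a transcription of the KAM step of Section~\ref{038}, with the perturbation size $\gamma^{3b}s^2\mu$ replaced throughout by $\varepsilon^2$; iterating this preliminary step finitely many times is what ultimately reduces the perturbation to $O(\varepsilon^9)$.
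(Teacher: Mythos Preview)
Your proposal is correct and takes exactly the same approach as the paper, which in fact omits the proof entirely with the remark that it ``is similar to the one of Lemma~\ref{Eq53}.'' You have supplied precisely those details: the homological equations are the block-triangular system of Section~\ref{Homol.}, solvability on $\mathcal{O}_1$ follows from the Weyl-type perturbation argument (with Appendix~\ref{Nonsin} handling the $(n+2m)^2$-block), and the uniform bound across all $f_{kij}$---the one visible difference from Lemma~\ref{Eq53}---is correctly traced to the fact that here $P$ is analytic on a fixed domain, so Cauchy in $(y,z)$ produces no negative powers of $s$.
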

\begin{proof}
The proof of this lemma is similar to the one of {Lemma $\ref{Eq53}$}. And we omit the detail.

\end{proof}
Consider the translation $\phi : x \rightarrow x, \left(
                          \begin{array}{c}
                            y \\
                            z \\
                          \end{array}
                        \right) \rightarrow \left(
                                              \begin{array}{c}
                                                y \\
                                                z \\
                                              \end{array}
                                            \right)+ \left(
                                              \begin{array}{c}
                                                y_0 \\
                                                z_0 \\
                                              \end{array}
                                            \right),$
where
\begin{eqnarray}\label{EQ16}
M \left(
                                              \begin{array}{c}
                                                y_0 \\
                                                z_0 \\
                                              \end{array}
                                            \right) + \left(
                                                        \begin{array}{c}
                                                          \partial_y h \\
                                                          \partial_z h \\
                                                        \end{array}
                                                      \right)
                                             =
- \left(
                                              \begin{array}{c}
                                              \varepsilon^2  p_{010} \\
                                              \varepsilon^2  p_{001} \\
                                              \end{array}
                                            \right).
\end{eqnarray}
Then Hamiltonian $(\ref{H1})$ reaches
\begin{eqnarray*}
H_1 = \bar{H}_1 \circ \phi = e_1 + \langle \omega_1, y\rangle + \frac{1}{2} \langle \left(
                                                           \begin{array}{c}
                                                             y \\
                                                             z \\
                                                           \end{array}
                                                         \right), M_1 \left(
                                                                      \begin{array}{c}
                                                                        y \\
                                                                        z \\
                                                                      \end{array}
                                                                    \right)
\rangle + h_1(y,z)+ P_1,
\end{eqnarray*}
where
\begin{eqnarray*}
e_1 &=& e + \langle \omega, y_0\rangle + \frac{1}{2} \langle \left(
                                                               \begin{array}{c}
                                                                 y_0 \\
                                                                 z_0 \\
                                                               \end{array}
                                                             \right), \big(M + 2\varepsilon^2 \left(
                                                                         \begin{array}{cc}
                                                                           p_{k20} & \frac{1}{2} p_{k11} \\
                                                                           \frac{1}{2} p_{k11}^T & p_{k02}\\
                                                                         \end{array}
                                                                       \right) \big) \left(
                                                               \begin{array}{c}
                                                                 y_0 \\
                                                                 z_0 \\
                                                               \end{array}
                                                             \right)
\rangle\\
 &~&+\varepsilon^2 p_{000}+ \langle \left(
                                          \begin{array}{c}
                                            \varepsilon^2 p_{010} \\
                                            \varepsilon^2 p_{001} \\
                                          \end{array}
                                        \right)
, \left(
                                          \begin{array}{c}
                                             y_0 \\
                                             z_0 \\
                                          \end{array}
                                        \right)\rangle+ h(y_0, z_0),\\
\omega_+ &=& \omega + \langle \left(
                              \begin{array}{c}
                                y \\
                                z \\
                              \end{array}
                            \right), M \left(
                                         \begin{array}{c}
                                           y_0 \\
                                           z_0 \\
                                         \end{array}
                                       \right)
\rangle+ \langle \left(
                              \begin{array}{c}
                                y \\
                                z \\
                              \end{array}
                            \right),  \left(
                                         \begin{array}{c}
                                         \partial_y h   \\
                                           \partial_z h \\
                                         \end{array}
                                       \right)
\rangle + \langle \left(
                              \begin{array}{c}
                                y \\
                                z \\
                              \end{array}
                            \right),  \left(
                                         \begin{array}{c}
                                         p_{010} \\
                                         p_{001} \\
                                         \end{array}
                                       \right)
\rangle,\\
M_+ &=& M +  2\varepsilon\left(
                                                                         \begin{array}{cc}
                                                                           p_{020} & \frac{1}{2} p_{011} \\
                                                                           \frac{1}{2} p_{011}^T & p_{002}\\
                                                                         \end{array}
                                                                       \right)+ \partial_{(y,z)}^2 h,\\
h_+ &=& h(y+ y_0, z+ z_0) - h(y_0, z_0)- \langle \left(
                                                   \begin{array}{c}
                                                     y \\
                                                     z \\
                                                   \end{array}
                                                 \right),  \left(
                                                   \begin{array}{c}
                                                     \partial_y h \\
                                                     \partial_z h \\
                                                   \end{array}
                                                 \right)
\rangle\\
&~& -\frac{1}{2} \langle \left(
                                \begin{array}{c}
                                  y \\
                                  z \\
                                \end{array}
                              \right),  \partial_{(y, z)}^2 h\left(
                                \begin{array}{c}
                                  y \\
                                  z \\
                                \end{array}
                              \right)
\rangle,\\
P_1 &=&  \bar{P}_1 \circ\phi + \frac{\varepsilon^2}{2} \langle \left(
                                                     \begin{array}{c}
                                                       y \\
                                                       z \\
                                                     \end{array}
                                                   \right), 2 \left(
                                                                \begin{array}{cc}
                                                                  p_{k20} & \frac{1}{2} p_{k11} \\
                                                                  \frac{1}{2} p_{k11}^T & p_{k20} \\
                                                                \end{array}
                                                              \right)
\left(
                                                     \begin{array}{c}
                                                       y_0\\
                                                       z_0 \\
                                                     \end{array}
                                                   \right) \rangle.
\end{eqnarray*}

\begin{lemma}
Assume that
\begin{itemize}
\item[${(H3)}$] $M^T M \geq (\min\{\mu_1, \cdots, \mu_l\})^2 I_{n+ 2m}.$
\end{itemize}
 Then there is a constant $c$ such that for all $|l| \leq l_0$, we have
\begin{eqnarray*}
|\partial_\lambda^l e_1 - \partial_\lambda^l e|_{\bar{\mathcal{O}}}, |\partial_\lambda^l M_1 - \partial_\lambda^l M|_{\bar{\mathcal{O}}},\ |\left(
   \begin{array}{c}
     \partial_\lambda^l y_0 \\
     \partial_\lambda^l z_0 \\
   \end{array}
 \right)
|_{\bar{\mathcal{O}}} \leq \frac{\varepsilon}{\eta^{l_0}}.
\end{eqnarray*}
\end{lemma}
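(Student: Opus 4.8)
The plan is to repeat the proof of Lemma~\ref{Lm1} almost verbatim, with the perturbation scale $\gamma^{3b}s^{2}\mu$ there replaced by $\varepsilon^{2}$ here. First I would record the coefficient bounds. Because $P$ is real analytic on a \emph{fixed} complex neighbourhood of $T^{n}\times G$ (the shrunken polydisc $|y|,|z|<s=\varepsilon^{4}$ is merely the domain on which the iteration is run and does not change the intrinsic Taylor coefficients at the origin), the coefficients $p_{000},p_{010},p_{001},p_{020},p_{011},p_{002}$ of $P$ are bounded by an absolute constant, so the Cauchy inequality gives $|\partial_{\lambda}^{l}(\varepsilon^{2}p_{000})|_{\bar{\mathcal{O}}},\ldots,|\partial_{\lambda}^{l}(\varepsilon^{2}p_{002})|_{\bar{\mathcal{O}}}\leq c\varepsilon^{2}/\eta^{l_{0}}$ for all $|l|\leq l_{0}$. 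Since $h$ contains only monomials of degree $\geq 3$, on $D(s)$ one has $|\partial_{(y,z)}^{2}h|=O(s)$, $|\partial_{(y,z)}h|=O(s^{2})$ and $|h|=O(s^{3})$.

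Next I would estimate $(y_{0},z_{0})$ from $(\ref{EQ16})$. Writing $\partial_{(y,z)}h(y_{0},z_{0})=\bigl(\int_{0}^{1}\partial_{(y,z)}^{2}h(ty_{0},tz_{0})\,dt\bigr)(y_{0},z_{0})^{T}$, which is legitimate because $\partial_{(y,z)}h$ vanishes at the origin, equation $(\ref{EQ16})$ becomes $\mathcal{M}\,(y_{0},z_{0})^{T}=-(\varepsilon^{2}p_{010},\varepsilon^{2}p_{001})^{T}$, where $\mathcal{M}=M+\int_{0}^{1}\partial_{(y,z)}^{2}h(ty_{0},tz_{0})\,dt$ differs from $M$ by a term of norm $O(s)$. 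Exactly as in Lemma~\ref{Lm1}, $\mathcal{M}^{T}\mathcal{M}=M^{T}M+\hat{A}$ with $\hat{A}$ Hermitian and $\|\hat{A}\|_{\infty}\leq cs\leq\frac{1}{2}(\min\{\mu_{1},\cdots,\mu_{l}\})^{2}$ once $\varepsilon$ is small, so Weyl's theorem together with (H3) yields $\mathcal{M}^{T}\mathcal{M}\geq\frac{1}{2}(\min\{\mu_{1},\cdots,\mu_{l}\})^{2}I_{n+2m}$. Hence $\frac{1}{2}(\min_{j}\mu_{j})^{2}|(y_{0},z_{0})|^{2}\leq\varepsilon^{4}|(p_{010},p_{001})|^{2}\leq c\varepsilon^{4}$, and since $\varepsilon\leq\mu_{j}$ for every $j$ this gives $|(y_{0},z_{0})|_{\mathcal{O}}\leq c\varepsilon$; the Cauchy inequality then delivers $|\partial_{\lambda}^{l}(y_{0},z_{0})|_{\bar{\mathcal{O}}}\leq\varepsilon/\eta^{l_{0}}$, the constant being absorbed as usual.

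With $(y_{0},z_{0})$ under control the remaining two estimates follow by inspecting the explicit formulas for $M_{1}$ and $e_{1}$. In $M_{1}-M$ the extra block is $O(\varepsilon^{2})$ while $\partial_{(y,z)}^{2}h(y_{0},z_{0})=O(|(y_{0},z_{0})|)=O(\varepsilon)$, so $\|\partial_{\lambda}^{l}(M_{1}-M)\|_{\bar{\mathcal{O}}}\leq\varepsilon/\eta^{l_{0}}$; and in $e_{1}-e$ every summand ($\langle\omega,y_{0}\rangle$, the quadratic form in $(y_{0},z_{0})$, $\varepsilon^{2}p_{000}$, $\langle(\varepsilon^{2}p_{010},\varepsilon^{2}p_{001}),(y_{0},z_{0})\rangle$ and $h(y_{0},z_{0})$) is a product of already-estimated small factors and is $O(\varepsilon)$, so $|\partial_{\lambda}^{l}(e_{1}-e)|_{\bar{\mathcal{O}}}\leq\varepsilon/\eta^{l_{0}}$. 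The one substantive step, exactly as in Lemma~\ref{Lm1}, is the lower bound $\mathcal{M}^{T}\mathcal{M}\geq\frac{1}{2}(\min_{j}\mu_{j})^{2}I_{n+2m}$: since $M^{-1}$ may be unbounded as $\varepsilon\to 0$ one cannot invert $M+\partial_{(y,z)}^{2}h$ by a Neumann series, and it is precisely the Hermitian/Weyl argument that lets condition (H3) survive the $O(s)$ perturbation once $s$ is small relative to $(\min_{j}\mu_{j})^{2}$.
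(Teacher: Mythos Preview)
Your proposal is correct and follows essentially the same approach as the paper, which simply states that the proof is similar to Lemma~\ref{Lm1} and omits all details. Your write-up in fact supplies more than the paper does: you make explicit that the Taylor coefficients $p_{0\imath\jmath}$ are $O(1)$ because $P$ is analytic on a fixed neighbourhood independent of $\varepsilon$, and you justify the linearisation of $\partial_{(y,z)}h(y_0,z_0)$ via the integral mean-value form, both of which the paper leaves implicit.
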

\begin{proof}
The proof of this lemma is similar to the one of Lemma \ref{Lm1}. And we omit the detail.

\end{proof}

Standardly, for all $0\leq t\leq1$,
\begin{eqnarray}\label{1012}
\phi_F^t&:& D_{\frac{\alpha}{4}} \rightarrow D_{\frac{\alpha}{2}},\\
\phi&:& D_{\frac{\alpha}{8}} \rightarrow D_{\frac{\alpha}{4}}
\end{eqnarray}
are well defined, real analytic and depend smoothly on $\lambda \in {\mathcal{O}}_+$ under the following assumptions
\begin{itemize}
\item[{(H4)}]  {$c \mu\Gamma( r- r_+) < \frac{1}{8} (r -r_+),$}
\item[{(H5)}]  {$c \mu \Gamma ( r- r_+) < \frac{1}{8}\alpha,$}
\item[{(H6)}]  {$\mu  < \frac{1}{8}\alpha.$}
\end{itemize}
 Moreover, there is a constant $c$ such that for all $0\leq t\leq 1$, $|l|\leq l_0$,
\begin{eqnarray*}
|\partial_\lambda^l  \phi_F^t|_{D_{\frac{\alpha}{4} }\times \mathcal{O}_+} \leq \varepsilon \Gamma(r -r_+).
\end{eqnarray*}

Since $
R_t = (1-t) \{N, F\} + R= t R + (1-t) [R],$
we have
\begin{eqnarray*}
\{R_t, F\} &=& \partial_x R_t \partial_y F - \partial_y R_t \partial_x F + \partial_z R_t J \partial_z F\\
&=& \partial_x (t R) \partial_y F - \partial_y (t R + (1-t) [R]) \partial_x F + \partial_z (t R + (1-t) [R]) J \partial_z F.
\end{eqnarray*}
Then $
|\{R_t, F\}| \leq \varepsilon^3 \Gamma(r - r_+).$
Further, $
|\int_0^1 \{R_t, F\} \circ \phi_F^t dt|_{D_{\frac{\alpha}{4}} \times \bar{\mathcal{O}}_+} \leq \varepsilon^3 \Gamma(r - r_+).$
Therefore, $
|( P- R)\circ \phi_F^1|_{D_{\frac{\alpha}{4}} \times \bar{\mathcal{O}}_+} \leq \varepsilon^2 \mu \Gamma(r -r_+).$
Let $\mu^a = \varepsilon,$ where $a>1$ is a given constant.
According to $
 \bar{P}_+ = \int_0 ^1 \{R_t,F\}\circ \phi_F^t dt + (P - R )\circ \phi_F^1,$
we have $
|\bar{P}_+|\leq c \varepsilon^2 \big(\varepsilon \Gamma(r - r_+)+ \mu \Gamma(r - r_+)\big) \leq \varepsilon^2 \mu^{1-\sigma}$
under the following assumption $\mu^\sigma \Gamma(r - r_+)<1,$ which is obvious.
Therefore, $
|\partial_\lambda^l \bar{P}_+|\leq c \frac{\varepsilon^2 \mu^{1-\sigma}}{\eta^{l_0}}.
$
Since $
|\partial_\lambda^l \phi|_{\bar{\mathcal{O}}_+} \leq \frac{\varepsilon}{\eta^{l_0}},$
we have
\begin{eqnarray*}
|\partial^{l} \bar{P}_+ \circ \phi+ \frac{\varepsilon^2}{2}\partial^{l} \langle \left(
                                                                   \begin{array}{c}
                                                                     y \\
                                                                     z \\
                                                                   \end{array}
                                                                 \right),
\left(
  \begin{array}{cc}
    p_{020} & \frac{1}{2} p_{011} \\
    \frac{1}{2} p_{011}^T & p_{020} \\
  \end{array}
\right)\left(
         \begin{array}{c}
           y_0 \\
           z_0 \\
         \end{array}
       \right)
\rangle|_{D_{\frac{\alpha}{8}}\times \bar{\mathcal{O}}_+} \leq \frac{\varepsilon^2 \mu^{1-\sigma}}{\eta^{l_0}} \leq \frac{\varepsilon \varepsilon_1 }{\eta_+^{l_0}},
\end{eqnarray*}
where $\varepsilon_1 = \varepsilon^{1+ \frac{1-\sigma}{a}}$.
\begin{remark}
For next KAM step, $\mu_1^a = \varepsilon_1$.
\end{remark}

\begin{remark}
As stated in ${\cite{Arnold1}}$, the procedure of successive changes of variable has the remarkable property of quadratic convergence: after $m$ changes of variables the phase-depending discrepancy in the Hamiltonian is of order $\varepsilon^{2m}$ (disregarding small denominators). Due to small denominators, the convergence of KAM iteration is slower than quadratic convergence. In our procedure, after $m$ KAM step, the new perturbation $|P_m| = O( \varepsilon^{1+ (1+ \frac{1-\sigma}{a})m})$, where $0< \sigma <1$, $\mu^a = \varepsilon$.
\end{remark}

After $\tilde{\nu}_* = [\frac{\log 9}{\log 1+ \frac{1-\sigma}{a} }]+1$ KAM steps mentioned above, we get the following system
\begin{eqnarray*}
H_{\tilde{\nu}_*}(x,y,z,\lambda)&=& e_{\tilde{\nu}_*}+ \langle \omega_{\tilde{\nu}_*}, y\rangle+ \frac{1}{2}\langle \left(
                                                                              \begin{array}{c}
                                                                                y \\
                                                                                z \\
                                                                              \end{array}
                                                                            \right),
          M_{\tilde{\nu}_*} \left(
                                                                              \begin{array}{c}
                                                                                y \\
                                                                                z \\
                                                                              \end{array}
                                                                            \right)\rangle
                                                                            + h_{\tilde{\nu}_*}(y,z)\\
&&+ \varepsilon^{10} P(x,y, z,\lambda).~~~~~~~~~~
\end{eqnarray*}
Let $\gamma = \varepsilon^{\frac{\sigma}{3b}}$, $s = \varepsilon^4$, $\mu = \varepsilon^{1- \sigma+\iota}$, $\eta = \varepsilon^{\frac{\iota}{l_0}}$, $\iota < \sigma$. Obviously, $|\partial_\lambda^l (\varepsilon^9 P)| =  |\partial_\lambda^l \bar{P}| \leq \frac{\gamma^{3b} s^2 \mu}{\eta^{l_0}}.$
Then, using KAM steps in Section $\ref{038}$, we finish the proof of Theorem \ref{TH1}.

\begin{remark}\label{remark1}
According to the proof, $\varepsilon^2 P(x,y, z,\lambda)$ could be reduced to $\varepsilon P(x,y, z,\lambda)$ satisfying
 $\varepsilon ||P(x,y,z,\lambda)||_{C^2} = \max\limits_{|i|+ |j| \leq2} \sup\limits_{|Im x|\leq r, |\lambda|\leq \delta_1,y,z}|\frac{\partial^{i+j} P}{\partial y^i \partial z^j}|<c \varepsilon \varepsilon^{\frac{1}{a}},$ for some constant $c$ and $a$.
\end{remark}

\section{Persistence of Resonant tori}\setcounter{equation}{0} \label{Resonant}
Consider the following resonant multi-scale Hamiltonian systems
 \begin{eqnarray}\label{Eq55}
H(x,y)= \varepsilon_0 h_0(y) + \cdots+ \varepsilon_m h_m(y) + \varepsilon^2 P(x,y),
\end{eqnarray}
where $0<\varepsilon\leq \varepsilon_i\ll1$, $0\leq i\leq m$, $(x, y) \in T^d \times G \subset T^d \times R^d$, $G$ is a bounded closed region

 The resonant frequency $\omega$ is $m_0$-resonant if there is  rank $m_0$ subgroup $g$ of $Z^d$ generated by $\tau_1,$ $\cdots,$ $\tau_{m_0}$ such that $\langle k, \omega\rangle = 0$, $\forall$ $k \in g$ and $\langle k, \omega\rangle \neq 0$ for all $k \in Z^d/ g$. According to group theory, there are integer vectors $\tau_1'$, $\cdots,$ $\tau_n'$ $\in$ $Z^d$ such that $Z^d$ is generated by $\tau_1$, $\cdots,$ $\tau_{m_0}$, $\tau_1'$, $\cdots,$ $\tau_n'$ and $\det K_0 = 1$, where $K_0 = (K_*, K')$, $K_* = (\tau_1', \cdots, \tau_n'),$ $K' = (\tau_1, \cdots, \tau_{m_0})$ are $d\times d$, $d\times n$, $d \times m_0$, respectively.

Assume $\omega(y) = \varepsilon_0 \partial_y h_0(y) + \cdots+ \varepsilon_m \partial_y h_m(y)$ is $m_0$-resonant. We set the $g$-resonant manifold $O(g, G) = \{y\in G: \langle k, \omega(y)\rangle= 0, k\in g\},$
which is an $n = d- m_0$ dimensional surface.  For any $y_0 \in O(g, G),$ according to Taylor series and the following symplectic transformation: $y -y_0 = K_0 p$, $q = K_0^T x$, we have
\begin{eqnarray}
\nonumber H(p,q) &=& e + \langle K_0^T \omega, p\rangle + \frac{1}{2} \langle K_0^T \partial_y^2 H K_0p, p\rangle + O(|K_0p|^3)\\
\nonumber &~&+ \varepsilon^2 P((K_0^T)^{-1}q, y_0 + K_0 p)\\
\label{Eq57}  &=&e + \langle {\omega}_*, p'\rangle + \frac{1}{2} \langle K_0^T \partial_y^2 H K_0p, p\rangle + O(|K_0p|^3)+ \varepsilon^2 \tilde{P}(q, p),~~~~~
\end{eqnarray}
where $\omega_* = K_*^T \omega $, $p = (p', p'')$, $q= (q', q'')$, $ \tilde{P}(q, p) = P((K_0^T)^{-1}q, y_0 + K_0 p)$, the dimension of $p'$ and $q'$ is $m$, the dimension of $p''$ and $q''$ is $m_0 = n- m.$ Obviously, in a $\varepsilon$-neighbourhood of $p_0$,
\begin{eqnarray*}
\tilde{P}(q, p) = \tilde{P}(q, p_0) + O(|p - p_0|)=\tilde{P}(q, p_0)+ O(\varepsilon).
\end{eqnarray*}
 Introduce a generating function $S = \langle Y,q\rangle + \varepsilon^2 \sum\limits_{k\in Z^{m}\setminus0} \frac{\sqrt{-1} h_k}{\langle k, \omega_*\rangle} (q'', \omega_*) e^{\sqrt{-1}\langle k, q'\rangle},$
where $h_k = \int_0^{2\pi} \tilde{P} (p_0, q) e^{-\sqrt{-1} \langle k, q'\rangle} dq'$.
Under the symplectic transformation generated by $S$: $(p, q~mod~2\pi)\mapsto (Y, X~mod~2\pi): p = \partial_q S(q,Y),~X =  \partial_Y S(q,Y),$
i.e., $p' = Y' - \varepsilon^2 \sum\limits_{k\in Z^m \setminus 0} k \frac{ h_k (p_0, q'')}{\langle k, \omega_*\rangle}  e^{\sqrt{-1}\langle k, q'\rangle},$ $X' = q',$ $p'' = Y'' +  \varepsilon^2  \sum\limits_{k\in Z^m \setminus 0} \partial_{q''} \big(\frac{\sqrt{-1} h_k(p_0, q'')}{\langle k, \omega_*\rangle}\big) e^{\sqrt{-1}\langle k, q'\rangle},$ $ X''= q'',$ (\ref{Eq57}) is changed to
\begin{eqnarray}
\nonumber H(X, Y) &=& e + \langle {\omega_*}, Y' \rangle + \frac{1}{2} \langle K_0^T \partial_y^2 H K_0 \left(
                                                                   \begin{array}{c}
                                                                     Y'  \\
                                                                     Y'' \\
                                                                   \end{array}
                                                                 \right),
\left(
                                                                   \begin{array}{c}
                                                                     Y'  \\
                                                                     Y'' \\
                                                                   \end{array}
                                                                 \right)\rangle+\tilde{h}_1+ \tilde{h}_2\\
\nonumber &~& + O\big((|Y'|+|Y''|)^3\big) + \varepsilon^2 \big( \tilde{P}(X, p_0)\\
\label{Eq58} &~&- \sum\limits_{0\neq k \in Z^m} h_k e^{\sqrt{-1}\langle k, X'\rangle} + O(\varepsilon)\big),~~~~~
\end{eqnarray}
where
\begin{eqnarray*}
A&=&- \varepsilon^2 \sum\limits_{k\in Z^m \setminus 0} k \frac{ h_k (p_0, q'')}{\langle k, \omega_*\rangle}  e^{\sqrt{-1}\langle k,q'\rangle},\\
B&=& \varepsilon^2  \sum\limits_{k\in Z^m \setminus 0}  \partial_{q''} \big(\frac{\sqrt{-1} h_k(p_0, q'')}{\langle k, \omega_*\rangle}\big)e^{\sqrt{-1}\langle k,q'\rangle},\\
\tilde{h}_1&=& \frac{1}{2} \langle K_0^T \partial_y^2 H K_0 \left(
                                                                   \begin{array}{c}
                                                                     Y'  \\
                                                                     Y'' \\
                                                                   \end{array}
                                                                 \right),
\left(
                                                                   \begin{array}{c}
                                                                     A  \\
                                                                     B \\
                                                                   \end{array}
                                                                 \right)\rangle +
                                                                 \frac{1}{2} \langle K_0^T \partial_y^2 H K_0 \left(
                                                                   \begin{array}{c}
                                                                     A  \\
                                                                     B \\
                                                                   \end{array}
                                                                 \right),
\left(
                                                                   \begin{array}{c}
                                                                     Y'  \\
                                                                     Y'' \\
                                                                   \end{array}
                                                                 \right)\rangle\\
&~&+ \frac{1}{2} \langle K_0^T \partial_y^2 H K_0 \left(
                                                                   \begin{array}{c}
                                                                     A \\
                                                                     B \\
                                                                   \end{array}
                                                                 \right),
\left(
                                                                   \begin{array}{c}
                                                                     A  \\
                                                                     B \\
                                                                   \end{array}
                                                                 \right)\rangle\\
&=& O(\varepsilon^2 Y) + O(\varepsilon^4),\\
\tilde{h}_2 &=&\varepsilon^2 \big( O(|Y'|^2+ |Y''|^2 + |Y' + Y''|)+ \varepsilon^2 (|Y'|+ |Y''|) + \varepsilon^4\big).
\end{eqnarray*}
Assume $(X'', p_0) = (0, p_0)$ is the nondegenerate critical point of $[\tilde{P}](X'', p_0) = \tilde{P}(X, p_0) - \sum\limits_{0\neq k \in Z^m} h_k e^{\sqrt{-1}\langle k, X'\rangle}$ for some $p_0$.
Then $(\ref{Eq58})$ arrives at
\begin{eqnarray}
\nonumber H(X, Y) &=& e + \langle {\omega_*}, Y' \rangle + \frac{1}{2} \langle K_0^T \partial_y^2 H K_0 \left(
                                                                   \begin{array}{c}
                                                                     Y'  \\
                                                                     Y'' \\
                                                                   \end{array}
                                                                 \right),
\left(
                                                                   \begin{array}{c}
                                                                     Y'  \\
                                                                     Y'' \\
                                                                   \end{array}
                                                                 \right)\rangle+\tilde{h}_1+ \tilde{h}_2~~~~~~~\\
\nonumber && + O\big((|Y'|+|Y''|)^3\big) + \varepsilon^2 \big( \frac{1}{2}\langle \partial_{ X''}^2 [\tilde{P}](0, p_0) X'', X''\rangle~~~~~~~\\
\label{Eq59}&& + O(|X''|^3)  + O(\varepsilon^3)\big),~~~~~~~
\end{eqnarray}
where $\tilde{h}_1$ and $\tilde{h}_2$ are defined as above.
With the following transformation:
\begin{eqnarray*}
\left(
     \begin{array}{c}
       Y' \\
       Y''\\
     \end{array}
   \right) \rightarrow\varepsilon^{\frac{1}{2}} \left(
     \begin{array}{c}
       Y' \\
       Y''\\
     \end{array}
   \right), \left(
     \begin{array}{c}
       X' \\
       X''\\
     \end{array}
   \right) \rightarrow \left(
     \begin{array}{c}
       X' \\
       X''\\
     \end{array}
   \right), H \rightarrow \varepsilon^{-\frac{1}{2}} H,
\end{eqnarray*}
 Hamiltonian $(\ref{Eq59})$ is changed to
\begin{eqnarray}
\nonumber H(X, Y) &=& \frac{e}{\varepsilon^2}+ \langle {\omega_*}, Y' \rangle + \frac{\varepsilon^{\frac{1}{2}}}{2} \langle K_0^T \partial_y^2 H K_0 \left(
                                                                   \begin{array}{c}
                                                                     Y'  \\
                                                                     Y'' \\
                                                                   \end{array}
                                                                 \right),
\left(
                                                                   \begin{array}{c}
                                                                     Y'  \\
                                                                     Y'' \\
                                                                   \end{array}
                                                                 \right)\rangle \\~~~~~~~~~~~
\nonumber &~&+ \frac{\varepsilon^{\frac{3}{2}}}{2}\langle \partial_{X''}^2 [\tilde{P}](0, p_0) X'', X''\rangle  + \varepsilon O\big((|Y'|+|Y''|)^3\big)~~~~~~~~~~~\\
\label{Eq61} &~&+ \hat{P}(X', X'',Y',Y''),
\end{eqnarray}
where
\begin{eqnarray*}
\hat{P}&=&\hat{h}_1+\hat{h}_2+ \varepsilon^{\frac{3}{2}} O(|X''|^3) + O(\varepsilon^{\frac{5}{2}}), \ \ \hat{h}_1 =O(\varepsilon^2 |Y|) + O(\varepsilon^{\frac{7}{2}}),\\
\hat{h}_2 &=&\varepsilon^{\frac{5}{2}} \big( O(|Y'|^2+ |Y''|^2 + |Y'| |Y''|)+ \varepsilon (|Y'|+ |Y''|) + \varepsilon^{3}\big).
\end{eqnarray*}

Let $Y' = y,$ $Y'' = v$, $X'' = u$, $X' = x$, $z = (v,u)^T.$ Rewrite $(\ref{Eq61})$ as
\begin{eqnarray}
\nonumber H(x,y,u,v) &=& \frac{e}{\varepsilon^2}+ \langle {\omega_*}, y\rangle + \frac{1}{2} \langle \left(
                                                                                \begin{array}{c}
                                                                                  y \\
                                                                                  z \\
                                                                                \end{array}
                                                                              \right), M \left(
                                                                                \begin{array}{c}
                                                                                  y \\
                                                                                  z \\
                                                                                \end{array}
                                                                              \right)
\rangle+ \varepsilon h(y,v)\\
\label{Eq62}&~& + \varepsilon^{\frac{3}{2}} \hat{P}(x,y,u,v),
\end{eqnarray}
where $h(y,v) = O\big((|y|+ |v|)^3\big)$, $M = \varepsilon^{\frac{1}{2}} \left(
        \begin{array}{cc}
          K_0^T \partial_y^2 H K_0 & 0 \\
          0 & \varepsilon \partial_{X''}^2 [\tilde{P}](0, p_0)  \\
        \end{array}
      \right)$ and
\begin{eqnarray*}
\hat{P} &=& O(\varepsilon^{\frac{1}{2}} |\left(
                                          \begin{array}{c}
                                            y \\
                                            v \\
                                          \end{array}
                                        \right)|
 ) + O(\varepsilon^{\frac{7}{2}}) + \varepsilon \big( O(|y|^2 + |v|^2 + |y||v|)\\
 &~&+ \varepsilon O(|y| + |v|) + \varepsilon^3 \big) + O(|u|^3) + O(\varepsilon).
\end{eqnarray*}
Combining Remark \ref{remark1} and Theorem \ref{TH1}, under conditions ${(S1)}$, ${(S2)}$, ${(S3)}$, ${(S4)}$, ${(S5')}$ and ${(S6')}$ we finish the proof of Theorem \ref{resonant}.

\section{Proof of Corollary \ref{cor.}}\label{POC}
According to {Theorem~\ref{TH1}}, the proof of this corollary is equivalent to verify conditions ${(M1)}$ and ${(M2)}$ under conditions ${(M1')}$ and ${(M2')}$. Denote
\begin{eqnarray*}
&~&\tilde{ \mathcal{A}}_1 =(\mathcal{A}_1(\lambda_0), \partial_\lambda \mathcal{A}_1(\lambda_0), \cdots, \partial_\lambda^\alpha \mathcal{A}_1(\lambda_0)),\\
&=&\left(
    \begin{array}{ccccccc}
      L_{k0} I_n & -M_{21}^TJ & \partial_\lambda (L_{k0} I_n) & \partial_\lambda(-M_{21}^TJ) & \cdots & \partial_\lambda^\alpha (L_{k0} I_n) & \partial_\lambda^\alpha (-M_{21}^TJ)\\
      0 & L_{k1} & 0 & \partial_\lambda L_{k1} & \cdots & 0 & \partial_\lambda^\alpha L_{k1}   \\
    \end{array}
  \right).
\end{eqnarray*}
There is an orthogonal matrix $Q_1$ exchanging some column vector of $\tilde{ \mathcal{A}}_1$ such that  $
\tilde{ \mathcal{A}}_1 Q_1 = \left(
                             \begin{array}{cc}
                               a_{11} & a_{12} \\
                               0 & a_{22} \\
                             \end{array}
                           \right),$
where $a_{11} = \big(L_{k0}I_n, \partial_\lambda (L_{k0} I_n), \cdots, \partial_\lambda^\alpha (L_{k0} I_n)\big),$ $a_{22}\\ = \big(L_{k1}, \partial_\lambda L_{k1}, \cdots ,  \partial_\lambda^\alpha L_{k1}\big),$ $a_{12} = \big(-M_{21}^TJ, \partial_\lambda(-M_{21}^TJ), \cdots, \partial_\lambda^\alpha (-M_{21}^TJ)\big).$
According to conditions ${(M1')}$, there is an orthogonal matrix $Q_2$ exchanging some column vectors of $a_{11}$ such that $
a_{11} Q_2 = (A,B),$ where $A$ is nonsingular. Therefore, there is an orthogonal matrix $Q_3 = \left(
                                                                            \begin{array}{cc}
                                                                              Q_2 & 0 \\
                                                                              0 & I \\
                                                                            \end{array}
                                                                          \right)
$ such that $
\tilde{ \mathcal{A}}_1 Q_1 Q_3 = \left(
                             \begin{array}{cc}
                               A & \tilde{a}_{12} \\
                               0 & \tilde{a}_{22} \\
                             \end{array}
                           \right),$
where  $\tilde{a}_{12} = (B, {a}_{12})$, $\tilde{a}_{22} = (0, {a}_{22})$.  Let $Q_4= \left(
                                                                                      \begin{array}{cc}
                                                                                        I_n & - A^{-1} \tilde{a}_{12}  \\
                                                                                        0 &  I_a\\
                                                                                      \end{array}
                                                                                    \right).
$
Then $
\tilde{ \mathcal{A}}_1 Q_1 Q_3 Q_4 = \left(
                             \begin{array}{cc}
                               A & 0 \\
                               0 & \tilde{a}_{22} \\
                             \end{array}
                           \right).$
According to condition $\bf{(M2')}$, there is an orthogonal matrix $Q_5$ such that $\tilde{a}_{22}Q_5 = (A_1, B_1)$, where $A_1$ is nonsingular. Therefore, there is an orthogonal matrix $Q_6 = \left(
                                                  \begin{array}{cc}
                                                    I & 0 \\
                                                    0 & Q_5 \\
                                                  \end{array}
                                                \right)
$ such that $\tilde{ \mathcal{A}}_1 Q_1 Q_3 Q_4 Q_6 = (\hat{A}, \hat{B}),$ where $\hat{A} = \left(
                                                                                           \begin{array}{cc}
                                                                                             A & 0 \\
                                                                                             0 & A_1 \\
                                                                                           \end{array}
                                                                                         \right),
$ $\hat{B} = \left(
               \begin{array}{c}
                 0 \\
                 B_1 \\
               \end{array}
             \right),
$
which, using the general Laplace expansion theorem, implies condition ${(M1)}$ hold.

Denote
\begin{eqnarray*}
\tilde{ \mathcal{A}}_2 &=&(\mathcal{A}_2(\lambda_0), \partial_\lambda \mathcal{A}_2(\lambda_0), \cdots, \partial_\lambda^\alpha \mathcal{A}_2(\lambda_0))\\
&=& \left(
    \begin{array}{ccc}
     \widetilde{ I_n\otimes L_{k0}I_n} & \widetilde{I_n \otimes (M_{21}^T J)} & 0 \\
      0 & \widetilde{I_n \otimes L_{k1}} & -\widetilde{(M_{21}^T J)\otimes I_{2m}} \\
      0 & 0 & \widetilde{L_{k2}} \\
    \end{array}
  \right),
\end{eqnarray*}
where
\begin{eqnarray*}
\widetilde{ I_n\otimes L_{k0}I_n} &=& \big(I_n\otimes L_{k0}I_n, \partial_\lambda (I_n\otimes L_{k0}I_n), \ldots, \partial_\lambda^\alpha(I_n\otimes L_{k0}I_n) \big),\\
\widetilde{I_n \otimes (M_{21}^T J)}&=& \big(I_n \otimes (M_{21}^T J), \partial_\lambda (I_n \otimes (M_{21}^T J)), \ldots, \partial_\lambda^\alpha(I_n \otimes (M_{21}^T J)) \big),\\
\widetilde{I_n \otimes L_{k1}} &=& \big(I_n \otimes L_{k1}, \partial_\lambda (I_n \otimes L_{k1}), \ldots, \partial_\lambda^\alpha(I_n \otimes L_{k1}) \big),\\
\widetilde{(M_{21}^T J)\otimes I_{2m}} &=& \big((M_{21}^T J)\otimes I_{2m}, \partial_\lambda ((M_{21}^T J)\otimes I_{2m}), \ldots, \partial_\lambda^\alpha((M_{21}^T J)\otimes I_{2m}) \big),\\
\widetilde{L_{k2}}&=& \big(L_{k2}, \partial_\lambda L_{k2}, \ldots, \partial_\lambda^\alpha L_{k2} \big).
\end{eqnarray*}
Due to conditions ${(D)}$, ${(M1')}$ and ${(M2')}$,  there are orthogonal matrices $\tilde{Q}_1$, $\tilde{Q}_2$ and $\tilde{Q}_3$ such that $\widetilde{ I_n\otimes L_{k0}I_n} \tilde{Q}_1 = (\tilde{A}_1, \tilde{B}_1),$ $\widetilde{I_n \otimes L_{k1}} = (\tilde{A}_2, \tilde{B}_2),$ $\widetilde{L_{k2}} = (\tilde{A}_3, \tilde{B}_3),$
where $n^2\times n^2$ matrix $\tilde{A}_1$, $2mn\times 2mn$ matrix $\tilde{A}_2$ and $4m^2\times 4m^2$ matrix $\tilde{A}_3$ are nonsingular.
Denote $\widetilde{I_n \otimes (M_{21}^T J)} \tilde{Q}_2 = (\tilde{A}_4, \tilde{B}_4)$, $\widetilde{(M_{21}^T J)\otimes I_{2m}} \tilde{Q}_3 = (\tilde{A}_5, \tilde{B}_5)$. Then $\tilde{ \mathcal{A}}_2 \left(
                         \begin{array}{ccc}
                           \tilde{Q}_1 & 0 & 0 \\
                           0 & \tilde{Q}_2 & 0 \\
                           0 & 0 & \tilde{Q}_3 \\
                         \end{array}
                       \right) = \left(
      \begin{array}{cccccc}
        \tilde{A}_1 & \tilde{B}_1 & \tilde{A}_4 & \tilde{B}_4 & 0 & 0 \\
        0 & 0 & \tilde{A}_2 & \tilde{B}_2 & \tilde{A}_5 & \tilde{B}_5 \\
        0 & 0 & 0 & 0 & \tilde{A}_3 & \tilde{B}_3 \\
      \end{array}
    \right). $
Furthermore, there is an orthogonal matrix $\tilde{Q}_4$ such that $\tilde{ \mathcal{A}}_2 \left(
                         \begin{array}{ccc}
                           \tilde{Q}_1 & 0 & 0 \\
                           0 & \tilde{Q}_2 & 0 \\
                           0 & 0 & \tilde{Q}_3 \\
                         \end{array}
                       \right) \tilde{Q}_4 = (\breve{A}, \breve{B}),$
where $\breve{A} = \left(
                \begin{array}{ccc}
                 \tilde{A}_1 &\tilde{A}_4 &  0 \\
                  0 & \tilde{A}_2 & \tilde{A}_5 \\
                   0 & 0 & \tilde{A}_3 \\
                \end{array}
              \right),\ \ \ \breve{B} =\left(
      \begin{array}{ccc}
          \tilde{B}_4 & \tilde{B}_1 & 0 \\
         \tilde{B}_2 & 0 & \tilde{B}_5 \\
        0 & 0 & \tilde{B}_3 \\
      \end{array}
    \right).$
Let $\tilde{Q}_5  = \left(
                      \begin{array}{cc}
                        \tilde{Q}_6 & 0 \\
                        0 & I \\
                      \end{array}
                    \right),
$
where $\tilde{Q}_6 = \left(
                \begin{array}{ccc}
                  I_{n^2} & - \tilde{A}_1^{-1} \tilde{A}_4 &  \tilde{A}^{-1} \tilde{A}_4 \tilde{A}_2^{-1} \tilde{A}_5\\
                  0 & I_{2mn} & -\tilde{A}_2^{-1} \tilde{A}_5 \\
                  0 & 0 & I_{4m^2} \\
                \end{array}
              \right).$
Then\\ $\tilde{ \mathcal{A}}_2 \left(
                         \begin{array}{ccc}
                           \tilde{Q}_1 & 0 & 0 \\
                           0 & \tilde{Q}_2 & 0 \\
                           0 & 0 & \tilde{Q}_3 \\
                         \end{array}
                       \right) \tilde{Q}_4 \tilde{Q}_5 =  \left(
      \begin{array}{cccccc}
       \tilde{A}_1 &0 &  0&   \tilde{B}_4 & \tilde{B}_1 & 0 \\
        0 &\tilde{A}_2 &  0&  \tilde{B}_2 & 0 & \tilde{B}_5 \\
       0 & 0 &  \tilde{A}_3 & 0 & 0 & \tilde{B}_3 \\
      \end{array}
    \right),$
which implies that condition ${(M2)}$ holds. We finish the proof of {Corollary \ref{cor.}}.

\section{Some Examples}\label{EXAM}

\begin{example}\label{Ex1}
Consider the following Hamiltonian systems coupling resonant harmonic oscillators with potential function:
\begin{eqnarray}\label{E1}
H(p, q) = \sum\limits_{i = 1} ^n \frac{1}{2}(p_i^2 + \omega_i^2 q_i^2) + U(q),
\end{eqnarray}
where $p= (p_1, \cdots, p_n)$, $q = (q_1, \cdots,q_n)$, $k_1 \omega_1+ \cdots + k_{n_1} \omega_{n_1} = 0$ for some $k=(k_1, \cdots, k_{n_1}) \in Z^{n_1}$, $n_1 < n$, $\omega_i = O(1)$, $1 \leq i \leq n_1$, $\omega_j = \varepsilon \tilde{{\omega}}_j$, $\tilde{{\omega}}_j = O(1)$, $n_1+1 \leq j \leq n$, and $U(q) = \varepsilon^2 \tilde{{U}}(q)$, $\tilde{{U}}(q)= O(1)$. We have:
\begin{theorem}\label{Meln}
Denote $\mathcal{O} = \{\omega: \langle k, \hat{\omega}\rangle \neq 0, \forall k \in Z^{n-n_1}\}$, where $\hat{\omega} = (\omega_{n_1+1}, \cdots, \omega_n)$, $\omega = (\omega_1, \cdots, \omega_n)\in G \subset R^{n}$, $G$ is a bounded closed region. Assume
\begin{itemize}
  \item [$\bf{(C1)}$] $\omega_{i} \neq 0$, $1 \leq i \leq n_1$.
\end{itemize}
Then there exist a $\varepsilon_0>0$ and a family of Cantor set $\mathcal{O}_\varepsilon \subset \mathcal{O},$ $0<\varepsilon<\varepsilon_0$, such that each $(n-n_1)$-torus $T_\omega$, $\omega \in \mathcal{O}_\varepsilon$, persists and gives rise to a perturbed $(n-n_1)$-torus $T_{\varepsilon,\omega}$ with slightly deformed frequencies. Moreover, the relative Lebesgue measure $|\mathcal{O}\setminus \mathcal{O}_\varepsilon| \rightarrow 0$ as $\varepsilon \rightarrow 0.$

\end{theorem}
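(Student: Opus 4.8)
\noindent\emph{Proof plan for Theorem \ref{Meln}.}
The plan is to bring \eqref{E1} into the form \eqref{701} and then apply Theorem \ref{TH1}(i). First I would introduce action--angle variables $(\theta_j,I_j)$ for the \emph{slow} oscillators $n_1+1\le j\le n$ via $q_j=\sqrt{2I_j/\omega_j}\,\sin\theta_j$, $p_j=\sqrt{2\omega_jI_j}\,\cos\theta_j$, keep the fast oscillators in Cartesian form grouped as $z=(q_1,\dots,q_{n_1},p_1,\dots,p_{n_1})$, and translate $I_j=\varepsilon+y_j$ about the reference action $I^0=(\varepsilon,\dots,\varepsilon)$. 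Choosing $I^0$ of size $\varepsilon$ is the crux of the reduction: on the domain $|y_j|<s=\varepsilon^4$ it keeps the physical slow amplitudes $O(1)$, so that $U=\varepsilon^2\tilde U(q)$ becomes a genuine perturbation $\varepsilon^2P(x,y,z,\lambda)$ with $P$ real analytic and $P=O(1)$, instead of one blowing up through the factor $\sqrt{I_j/\omega_j}=O(\varepsilon^{-1/2})$ that appears for $I_j=O(1)$. Writing $x=(\theta_{n_1+1},\dots,\theta_n)\in T^{n-n_1}$ and taking as external parameter $\lambda=(\omega_{n_1+1},\dots,\omega_n)/\varepsilon$, ranging over a bounded closed region on which $\langle k,\lambda\rangle\neq0$ for all $k\in Z^{n-n_1}\setminus\{0\}$ (this is just a rescaling of $\mathcal O$, since $\hat\omega=\varepsilon\lambda$), the Hamiltonian acquires the shape \eqref{701} with $n$ replaced by $n-n_1$, $2m$ by $2n_1$, a single tangential scale $\omega(\lambda)=\varepsilon\lambda$, $M=\mathrm{diag}(0,M_{22})$ with $M_{22}=\mathrm{diag}(\omega_1^2,\dots,\omega_{n_1}^2,1,\dots,1)$, $h\equiv0$, and $e(\lambda)=\varepsilon^2\langle\lambda,\mathbf 1\rangle$; that the perturbation is only of order $\varepsilon^2$ relative to the scales is then absorbed by the preliminary normal-form step of Section \ref{041}.

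I would then verify the hypotheses of Theorem \ref{TH1}(i), \emph{with ${(C1)}$ replaced by the weaker ${(C1')}$} of the Remark following that theorem. The replacement is unavoidable, because the slow integrable part is linear in its actions, so the $(y,y)$-block of $M$ vanishes identically and ${(C1)}$ fails --- this is precisely the properly degenerate, multi-scale feature of \eqref{E1}. Condition ${(D)}$ holds with $N=1$ since $\partial_\lambda\omega=\varepsilon I_{n-n_1}$. For ${(C1')}$, i.e.\ $M_{22}^TM_{22}\ge(\min_j\mu_j)^2I$, I would take the single scale $\mu_1:=\min_{1\le i\le n_1}\min(\omega_i^2,1)$, which by hypothesis $\mathbf{(C1)}$ (that is, $\omega_i\neq0$) is a fixed positive constant exceeding $\varepsilon$ for small $\varepsilon$; then $M_{22}^TM_{22}=\mathrm{diag}(\omega_i^4,1)\ge\mu_1^2I$. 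Finally, the eigenvalues entering $L_{k1},L_{k2}$, namely those of $M_{22}J$, are $\pm\sqrt{-1}\,\omega_i$ (each fast oscillator contributing the distinct pair $\pm\sqrt{-1}\,\omega_i$ since $\omega_i\neq0$), so $M_{22}J$ is diagonalizable and by Remark \ref{equi} conditions ${(M1')}$, ${(M2')}$ reduce to ${(M1'')}$, ${(M2'')}$; these hold with $N=1$ because $\partial_\lambda(\sqrt{-1}\langle k,\omega\rangle-\hat\lambda_i)=\partial_\lambda(\sqrt{-1}\langle k,\omega\rangle-\hat\lambda_i-\hat\lambda_j)=\sqrt{-1}\,\varepsilon k\neq0$ for every $k\neq0$.

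Theorem \ref{TH1}(i) (equivalently Corollary \ref{cor.}(i)) in its ${(C1')}$-form then yields $\varepsilon_0>0$ and Cantor sets $\mathcal O_\varepsilon\subset\mathcal O$ with $|\mathcal O\setminus\mathcal O_\varepsilon|\to0$ such that the $(n-n_1)$-torus $\{y=0,z=0\}$ survives as a perturbed $(n-n_1)$-torus for each $\lambda\in\mathcal O_\varepsilon$. Since the ${(C1')}$-scheme removes the resonant term $\langle p_{001},z\rangle$ but not the $O(\varepsilon^2)$ drift $p_{010}$ of the tangential frequency at each step, the perturbed frequency equals $\omega$ up to a total correction of order $\varepsilon^2$, i.e.\ is ``slightly deformed''; pulling back through the real analytic, $\varepsilon$-close-to-identity action--angle and translation maps yields $T_{\varepsilon,\omega}$ in the original $(p,q)$-variables, and the measure estimate transfers through the linear change $\hat\omega=\varepsilon\lambda$.

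The hard part is the degeneracy $M=\mathrm{diag}(0,M_{22})$: beyond forcing the ${(C1')}$-scheme, it requires re-examining the measure estimate of Lemma \ref{estimate measure} in that setting, which still closes here because $\omega(\lambda)=\varepsilon\lambda$ is an exact affine function of $\lambda$, the single scale $\mu_1$ is independent of $\lambda$ and of the KAM step, and $h\equiv0$, so no $\mu_j$-dependent frequency corrections accumulate apart from the harmless $O(\varepsilon^2)$ drift. The second delicate point, flagged above, is the $O(\varepsilon)$-choice of reference action, without which $\varepsilon^2\tilde U$ would fail to remain an $O(\varepsilon^2)$ perturbation after the passage to action--angle variables of the slow oscillators.
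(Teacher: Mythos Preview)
Your plan follows the same overall route as the paper: pass to action--angle variables in the slow modes, leave the fast modes as symplectic normal coordinates $z$, recognise the resulting Hamiltonian as an instance of \eqref{701}, and then verify ${(D)}$, ${(M1'')}$, ${(M2'')}$ via Remark~\ref{equi} so that Corollary~\ref{cor.} applies. The differences are in the details, and on two of them you are in fact more careful than the paper.

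First, the paper applies a particular linear symplectic change to the fast oscillators so that $M_{22}=\mathrm{diag}(\omega_1^2,\dots,\omega_{n_1}^2,\omega_1^2,\dots,\omega_{n_1}^2)$, hence the eigenvalues of $M_{22}J$ are $\pm\sqrt{-1}\,\omega_i^2$; you keep the raw $(q,p)$ coordinates, obtaining $M_{22}=\mathrm{diag}(\omega_1^2,\dots,\omega_{n_1}^2,1,\dots,1)$ with eigenvalues $\pm\sqrt{-1}\,\omega_i$. Either choice gives a diagonalisable $M_{22}J$ under $\mathbf{(C1)}$ and makes ${(M1'')}$, ${(M2'')}$ hold with $N=1$, so this is cosmetic.

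Second, you correctly observe that the $yy$-block of the full $M$ in \eqref{701} vanishes here, so condition ${(C1)}$ of Theorem~\ref{TH1} fails and one must invoke the weakened ${(C1')}$ of Remark~1.3; this is exactly why the conclusion is ``slightly deformed frequencies'' rather than frequency preservation. The paper simply cites Corollary~\ref{cor.} without flagging this, so your reduction is cleaner on this point. Your closing remark --- that the measure estimate still goes through because $\omega(\lambda)=\varepsilon\lambda$ is affine, $h\equiv0$, and the single normal scale is $\lambda$-independent, so the $\mu_j$-dependent frequency corrections warned of in Remark~1.3 do not in fact appear --- is the right justification.

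Third, your choice of reference action $I^0=(\varepsilon,\dots,\varepsilon)$ so that $\sqrt{I_j/\omega_j}=O(1)$ is a point the paper does not discuss; it is needed both to keep the action--angle map nonsingular and to keep $\varepsilon^2\tilde U$ a bounded $O(\varepsilon^2)$ perturbation on the KAM domain. One small addendum: to get the Cantor set in the full $\mathcal O\subset G\subset\mathbb R^n$ (not just in the $\hat\omega$-directions), take the whole $\omega$ as the external parameter rather than only $\lambda=\hat\omega/\varepsilon$; conditions ${(D)}$, ${(M1'')}$, ${(M2'')}$ are still verified by differentiating in the $\hat\omega$-components, and the excised set is then a product of $\{\omega_i\neq0\}$ with a Cantor set in $\hat\omega$, as expected.
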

\begin{proof}

Consider the following transformation:
\begin{eqnarray*}
\phi: \left\{
        \begin{array}{ll}
          p_i \rightarrow \omega_i \sqrt{1+ \frac{1}{\omega_i}} p_i+ \omega_i q_i, q_i \rightarrow p_i + \sqrt{1+ \frac{1}{\omega_i}} q_i , & \hbox{$1\leq i \leq n_1$;} \\
          p_i \rightarrow \sqrt{2I_i \omega_i } \cos \varphi_i, q_i \rightarrow \sqrt{\frac{2I_i}{ \omega_i} } \sin \varphi_i, & \hbox{$n_1+ 1\leq i \leq n$,}
        \end{array}
      \right.
\end{eqnarray*}
which satisfies
\begin{eqnarray*}
\sum\limits_{i = 1}^{n_1} d p_i \wedge d q_i + \sum\limits_{i = n_1+ 1}^{n} d p_i \wedge d q_i  = \sum\limits_{i = 1}^{n_1} d p_i \wedge d q_i+ \sum\limits_{i = n_1+1}^{n} d I_i \wedge d \varphi_i.
\end{eqnarray*}
Then Hamiltonian $(\ref{E1})$ is changed to
\begin{eqnarray}\label{E2}
\nonumber H &=& \sum\limits_{i = n_1+1} ^n \omega_i I_i + \sum\limits_{i = 1} ^{n_1} \frac{\omega_i^2}{2} \big((\sqrt{1+ \frac{1}{\omega_i}} p_i q_i)^2 + (p_i + \sqrt{1+ \frac{1}{\omega_i}} q_i)^2\big)  + \bar{U}\\
&=& \varepsilon \langle \tilde{\omega}, I \rangle + \frac{1}{2} \langle z, M z\rangle + \varepsilon^2 \hat{{U}},
\end{eqnarray}
where $\bar{U} = \varepsilon^2 \hat{U}( \hat{q}_1, \cdots, \hat{q}_{n_1},   \sqrt{\frac{2I_{n_1+1}}{ \omega_{n_1+1}} } \sin \varphi_{n_1+1}, \cdots, \sqrt{\frac{2I_{n}}{ \omega_{n}} } \sin \varphi_{n})$, $\hat{q}_i = p_i + \sqrt{1+ \frac{1}{\omega_i}} q_i$, $1\leq i\leq n_1$, $\tilde{\omega} = (\tilde{\omega}_{n_1 +1}, \cdots ,\tilde{\omega}_n)$, $I= (I_{n_1+1}, \ldots, I_{n})$,  $z= (u, v)$, $u=(\sqrt{1+ \frac{1}{\omega_1}}p_1 + q_1, \cdots, \sqrt{1+ \frac{1}{\omega_{n_1}}}p_{n_1} + q_{n_1})$, $v= (p_1 + \sqrt{1+ \frac{1}{\omega_1}}q_1, \cdots, p_{n_1} + \sqrt{1+ \frac{1}{\omega_{n_1}}}q_{n_1})$, $M = diag (\omega_1^2, \cdots, \omega_{n_1}^2,\omega_1^2, \cdots, \omega_{n_1}^2 )$.

In this system, the parameters are frequencies. The R\"{u}ssmann-type nondegenerate condition ${(D)}$ hold obviously.  Due to condition ${(C1)}$, $M$ is nonsingular. Directly,
 \begin{eqnarray*}
 MJ  &= &\left(
         \begin{array}{cc}
           diag(\omega_1^2, \cdots, \omega_{n_1}^2) & 0 \\
           0 & diag(\omega_1^2, \cdots, \omega_{n_1}^2) \\
         \end{array}
       \right) \left(
                 \begin{array}{cc}
                   0 & I_{n_1} \\
                  - I_{n_1} & 0 \\
                 \end{array}
               \right)\\
       &=& \left(
             \begin{array}{cc}
               0 & diag(\omega_1^2, \cdots, \omega_{n_1}^2) \\
               -diag(\omega_1^2, \cdots, \omega_{n_1}^2) & 0 \\
             \end{array}
           \right).
 \end{eqnarray*}
 Since
 \begin{eqnarray*}
 \det (\lambda I_{2n_1} - MJ) &=&\det \left(
             \begin{array}{cc}
               \lambda I_{n_1} &- diag(\omega_1^2, \cdots, \omega_{n_1}^2) \\
               diag(\omega_1^2, \cdots, \omega_{n_1}^2) &  \lambda I_{n_1} \\
             \end{array}
           \right)\\
 &=& \det \big( \lambda^2 I_{n_1} + diag(\omega_1^4, \cdots, \omega_{n_1}^4)\big),
 \end{eqnarray*}
 the eigenvalues of $MJ$ are $\pm \sqrt{-1} \omega_1^2$, $\cdots$, $\pm \sqrt{-1} \omega_{n_1}^2$. Here, we use the following fact $\det \left(
        \begin{array}{cc}
          A_{11} & A_{12} \\
          A_{21} & A_{22} \\
        \end{array}
      \right) = \det \big( A_{11} A_{22} - A_{21} A_{12}\big)$ when $A_{11}$ is nonsingular and commutes with $A_{21}$. Therefore, there is an unitary matrix $Q$ such that
 \begin{eqnarray*}
 Q^{*} L_{k1} Q &=& Q^{*}(\sqrt{-1} \langle k, \omega\rangle I_{2n_1} - MJ )Q\\
 &=& \sqrt{-1} diag \big( \langle k, \omega\rangle - \omega_1^2,  \cdots, \langle k, \omega\rangle - \omega_{n_1}^2,  \langle k, \omega\rangle + \omega_1^2,  \cdots, \langle k, \omega\rangle + \omega_{n_1}^2\big),
 \end{eqnarray*}
 which implies the ${(M1'')}$ hold. According $(A \otimes B) (C\otimes D) = (AC)\otimes(BD)$, there is an unitary matrix $Q$ such that
 \begin{eqnarray*}
 &~&(Q^*\otimes Q^*) L_{k2} (Q \otimes Q)\\
  &=&(Q^*\otimes Q^*) \big( \sqrt{-1} \langle k, \omega\rangle I_{4n_1^2} - (MJ)\otimes I_{2n_1}- I_{2n_1} \otimes (MJ)\big) (Q \otimes Q)\\
  &=& diag\big( a_1, \cdots, a_{2n_1}\big),
 \end{eqnarray*}
 where $a_i$, $1\leq i \leq2n_1$, have the following three forms:  (1) $\sqrt{-1} (\langle k, \omega\rangle - 2 \omega_j^2)$, $1\leq j\leq n_1$,
   ${(2)}$ $\sqrt{-1} \langle k, \omega\rangle$,
   ${(3)}$ $\sqrt{-1} (\langle k, \omega\rangle  \pm \omega_i^2 \pm\omega_j^2)$, $1\leq i\neq j\leq n_1$,
which imply condition ${(M2'')}$ hold. Combining {Corollary \ref{cor.}} and {Remark \ref{equi}}, we finish the proof of {Theorem \ref{Meln}}.

\end{proof}

\end{example}
\begin{remark}
This is an example showing the persistence of resonant tori, a special lower dimensional invariant tori, without Melnikov's condition. The differential equations corresponding to Hamiltonian $(\ref{E1})$ are $\ddot{q}_j + \omega_j^2 q_j = - \partial_{q_j} U(q),$
which have many applications ({\cite{Gauckler}}).
\end{remark}

\begin{example}\label{Exa1}
Consider a particular properly degenerate Hamiltonian system with the following form:
\begin{eqnarray}
H(x,y) = H_0(y_0) + \varepsilon H_1 (y_1) + \varepsilon^2 P(x,y),
\end{eqnarray}
where $x\in T^n$, $y = (y_0, y_1) \in G \subset R^n $, $G$ is bounded closed region, the dimensions of $y_0$ and $y_1$ are $n_0$ and $n_1 = n- n_0$, respectively.

Assume $H_0(y_0)$ is $n_0$-resonant, i.e., there are $n_0$ linearly independent $\hat{k}_i$ such that $\langle \hat{{k}}_i, \partial_{y_0} H_0\rangle = 0$, and $H_1 (y_1)$ is nonresonant, i.e., $\langle \tilde{k}, \partial_{y_1} H_1\rangle \neq 0$, $\forall ~ \tilde{k} \in Z^{n_1}.$ Let $K_* = (\hat{k}_1, \cdots, \hat{k}_{n_0})$. Then there is a $K_0 = \left(
               \begin{array}{cc}
                 K_* & 0 \\
                 0 & K' \\
               \end{array}
             \right) $ such that $\det K_0 = 1$, where $K' = (\tilde{k}_1,\cdots, \tilde{k}_{n_1})$, $\tilde{k}_{i} \in Z^{n_1}$ satisfies $\langle \tilde{k}, \partial_{y_1} H_1\rangle \neq 0$. We set the $g-$resonant manifold $O(g, G) = \{y = (y_0, y_1)\in G: \langle k, \partial_{y_0} H_0 \rangle= 0, k\in g\},$ where group $g$ generated by $\tilde{k}_1$, $\cdots$, $\tilde{k}_{n_0}.$

Let $L_{k1}=\sqrt{-1} \langle k, \omega\rangle I_{2n_0} - M J$ and $
L_{k2}=\sqrt{-1} \langle k, \omega\rangle I_{4n_0^2} - (M J) \otimes I_{2n_0} - I_{2n_0}\otimes(M J),$
where $M = \left(
         \begin{array}{cc}
           \varepsilon^{\frac{2}{3}} (K_*)^T \partial_{y_0}^2 H_0 K_* & 0 \\
           0 &  \varepsilon^{\frac{4}{3}}\partial_{X'}^2 h_0 \\
         \end{array}
       \right).$
For resonant properly degenerate Hamiltonian systems $(\ref{Exa1})$, we have the following result.
\begin{theorem}\label{prop}
For Hamiltonian systems $(\ref{Exa1})$ with $n_0$-resonant $H_0$ and nonresonant $H_1$, assume the following conditions:
\begin{itemize}
\item [${(A1)}$] $K_*^T \partial_{y_0}^2 H_0 K_*$ and $(K')^T\partial_{y_1}^2 H_1 K'$ are nonsingular;
\item [${(A2)}$] there is a positive constant $\tilde \sigma$ (independent of $\varepsilon$) such that\\
$|\det \partial_{x_1}^2 \int_{T^{n_1}}P (x,0) d x_2| > \tilde \sigma,$
where $x_1 = (K_*^T, 0) x$, $x_2 = \big((K')^T, 0\big) x$;
 \item [${(A3)}$] there is a finite positive integer $N$ such that for $\lambda\in \mathcal{O}$, $rank \{c_i: 1\leq i\leq 2n_0\} = 2n_0,$
where $c_i$ is a column of $\{\partial_\lambda^\alpha {L}_{k1}^i: 0\leq |\alpha|\leq N \}$, ${L}_{k1}^i$ is the $i$-th column of ${L}_{k1}$;
  \item [${(A4)}$] there is a finite positive integer $N$ such that for $\lambda\in \mathcal{O}$, $rank \{\bar{c}_i: 1\leq i\leq 4n_0^2\} = 4n_0^2,$
where $\bar{c}_i$ is a column of $\{\partial_\lambda^\alpha {L}_{k2}^i: 0\leq |\alpha|\leq N \}$, ${L}_{k2}^i$ is the $i$-th column of ${L}_{k2}$.
 \end{itemize}

Then there exist a $\varepsilon_0 >0 $ and a family of Cantor sets $O_\varepsilon(g, G) \subset O (g, G)$, $0<\varepsilon < \varepsilon_0$, such that for each $y \in O_\varepsilon(g, G)$, system $(\ref{Eq63})$ admits $2^{m_0}$ families of invariant tori, possessing  hyperbolic, elliptic or mixed types, associated to nondegenerate relative equilibria. Moreover, the relative Lebesgue measure $|O(g,G) \setminus O_{\varepsilon}(g,G)|$ tends to 0 as $\varepsilon \rightarrow 0$.
\end{theorem}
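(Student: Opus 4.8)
The plan is to recognize the Hamiltonian of Example~\ref{Exa1} as a two--scale instance of the resonant multi--scale Hamiltonian $(\ref{Eq63})$ — namely $m=1$ with $\varepsilon_0=1$, $\varepsilon_1=\varepsilon$, with $H_0$ the $O(1)$ part depending only on $y_0$ and $\varepsilon H_1$ the $O(\varepsilon)$ part depending only on $y_1$ — and then to invoke {Theorem~\ref{resonant}}(i), equivalently {Corollary~\ref{resonant1}}(i), since conditions $(A3)$ and $(A4)$ are exactly $(S5')$ and $(S6')$ for the present $L_{k1}$, $L_{k2}$. The task therefore splits into two parts: (a) extract the resonance data of the system and run the reduction of Section~\ref{Resonant} to bring it into the abstract form $(\ref{701})$; and (b) verify that $(A1)$--$(A4)$ imply the hypotheses $(S1)$, $(S2)$, $(S3)$, $(S5')$, $(S6')$ of {Theorem~\ref{resonant}}(i).

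For part (a): writing $\mathcal{H}_1 = H_0(y_0)+\varepsilon H_1(y_1)$, one has $\omega(y)=(\partial_{y_0}H_0(y_0),\ \varepsilon\,\partial_{y_1}H_1(y_1))$, and the resonance relations of $H_0$ lifted to $\mathbb{Z}^n$ generate a rank $m_0=n_0$ subgroup $g$ annihilating $\omega$, while the nonresonance of $H_1$ shows there are no further relations; hence $O(g,G)$ is $n_1$--dimensional. With $K_0=\mathrm{diag}(K_*,K')$, $\det K_0=1$, the block--diagonal structure of $\partial_y^2 H_0$ and $\partial_y^2 H_1$ gives $K_0^T\partial_y^2\mathcal{H}_1 K_0 = \mathrm{diag}(K_*^T\partial_{y_0}^2 H_0 K_*,\ \varepsilon\,(K')^T\partial_{y_1}^2 H_1 K')$, so in the splitting of Section~\ref{Resonant} the tangential block $\breve{M}_{11}$ sits in the $y_1$--directions, the $K_*^T\partial_{y_0}^2 H_0 K_*$--block goes into the normal block, and the coupling block vanishes: $\breve{M}_{12}=\breve{M}_{21}=0$. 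I would then apply, word for word, the reduction of Section~\ref{Resonant}: the symplectic change $y-y^*=K_0 p$, $q=K_0^T x$ about a point $y^*\in O(g,G)$; Taylor expansion; elimination of the resonant angular part of the $O(\varepsilon^2)$ perturbation by the generating function built from $\int_{T^{n_1}}P(x,0)\,dx_2$; and the action--energy rescaling (here with the $\varepsilon$--exponents recorded in the statement's definition of $M$, chosen so that the remainder becomes of genuinely higher order). This yields precisely a Hamiltonian of the form $(\ref{701})$ with $\breve{M}_{21}=0$ and $\breve{M}_{22}=M$, so that $\breve{\mathcal{A}}_1,\breve{\mathcal{A}}_2$ are block triangular; after the finitely many preliminary normal--form steps of Section~\ref{041} (cf.\ {Remark~\ref{remark1}}) its perturbation is small enough for the KAM scheme of Section~\ref{038}, hence for {Theorem~\ref{TH1}}.

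For part (b): $(A2)$ is literally $(S2)$ after renaming the resonant and nonresonant angle groups ($x_1\leftrightarrow x_2$). Since $K'$ is invertible, the nonsingularity of $(K')^T\partial_{y_1}^2 H_1 K'$ in $(A1)$ forces $\det\partial_{y_1}^2 H_1\neq0$, so $y_1\mapsto\partial_{y_1}H_1$ is a local diffeomorphism; restricted to the $n_1$--dimensional slice $O(g,G)$ this makes the reduced tangential frequency $\omega_*$ a local diffeomorphism of the parameter, which gives the R\"ussmann--type condition $(S1)$ already with $N=1$. For $(S3)$: $M$ is block diagonal with its two blocks nonsingular — by $(A1)$ and by $(A2)$ respectively — so their singular values remain comparable to the positive powers of $\varepsilon$ carried by $M$, whence $M^T M\ge\varepsilon^2 I$ for all small $\varepsilon$. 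Finally, because $\breve{M}_{21}=0$, conditions $(S5')$ and $(S6')$ reduce to the rank conditions on $\breve{L}_{k1}$ and $\breve{L}_{k2}$, i.e.\ to $(A3)$ and $(A4)$. Having verified $(S1)$, $(S2)$, $(S3)$, $(S5')$, $(S6')$, {Theorem~\ref{resonant}}(i) supplies a Cantor family $O_\varepsilon(g,G)\subset O(g,G)$ with $|O(g,G)\setminus O_\varepsilon(g,G)|\to0$, and for each $y\in O_\varepsilon(g,G)$ the $2^{m_0}=2^{n_0}$ families of hyperbolic, elliptic or mixed invariant tori attached to nondegenerate relative equilibria — the count $2^{n_0}$ being the Morse number of critical points of $\int_{T^{n_1}}P$ forced by $(A2)$, as in the remark following {Theorem~\ref{resonant}}.

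I expect the only real difficulty to be part (a): fixing the rescaling exponents so that the leftover perturbation is of the higher order demanded by Section~\ref{038} while simultaneously keeping every block of the multi--scale matrix $M$ above the threshold $\varepsilon$ required by $(S3)$ — in other words, controlling the inverse of the genuinely multi--scale matrix $M$ (whose smallest block is $\varepsilon^{4/3}\partial_{x_1}^2\int_{T^{n_1}}P\,dx_2$), exactly the point flagged in the introduction. Everything else is a matter of carrying the explicit change of variables of Section~\ref{Resonant} through with $\mathcal{H}_1 = H_0(y_0)+\varepsilon H_1(y_1)$ inserted, and then reading off that $(A1)$--$(A4)$ are the incarnations of $(S1)$--$(S3)$ and $(S5')$--$(S6')$ in this degenerate situation.
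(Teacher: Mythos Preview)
Your approach is essentially the paper's: run the Section~\ref{Resonant} reduction on $H_0(y_0)+\varepsilon H_1(y_1)+\varepsilon^2 P$, land in the abstract form $(\ref{701})$, and apply the main persistence result. The paper carries this out explicitly with the $\varepsilon^{2/3}$ rescaling (in place of Section~\ref{Resonant}'s $\varepsilon^{1/2}$), reaching the Hamiltonian $(\ref{End})$ whose normal matrix is precisely the $M$ of the statement, and then cites Corollary~\ref{cor.} under $(A1)$--$(A4)$.

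One point to tighten: you frame the endgame as ``invoke Corollary~\ref{resonant1}'' and assert that $(A3),(A4)$ are literally $(S5'),(S6')$. They are not quite the same objects. Conditions $(S5'),(S6')$ are stated for $\breve{L}_{k1},\breve{L}_{k2}$ built from the \emph{unscaled} $\breve{M}_{22}=\mathrm{diag}\big(K_*^T\partial_{y_0}^2H_0K_*,\ \varepsilon\,\partial_{x_2}^2\!\int P\,dx_1\big)$ (scales $1$ and $\varepsilon$), whereas $(A3),(A4)$ use the $M$ of the statement (scales $\varepsilon^{2/3}$ and $\varepsilon^{4/3}$), which only appears \emph{after} the $\varepsilon^{2/3}$ rescaling. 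Since you do plan to use that rescaling (``with the $\varepsilon$--exponents recorded in the statement's definition of $M$''), what you actually arrive at is $(\ref{End})$, and at that point the correct citation is Corollary~\ref{cor.} with hypotheses $(D),(C1),(M1'),(M2')$ --- for which $(A3),(A4)$ are indeed exactly $(M1'),(M2')$ and $(A1),(A2)$ supply $(D),(C1)$. So your route is sound but mis-labelled; the paper follows it, and the choice of exponent $\varepsilon^{2/3}$ (rather than $\varepsilon^{1/2}$) is made so that the tangential Hessian block $\tfrac{\varepsilon^{1/3}}{2}\langle y,(K')^T\partial_{y_1}^2H_1K'\,y\rangle$ drops into the perturbation while the remainder is still of order $\varepsilon^{4/3}$, matching the smallest scale carried by $M$.
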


\begin{proof}
With the Taylor series, for any $y_0=(\hat{y}_0, \hat{y}_1) \in O(g, G),$ we have
\begin{eqnarray}
\nonumber H(x,y) &=& e+ \langle \omega_0, \tilde{y}_0\rangle + \frac{1}{2} \langle\tilde{y}_0, \partial_{y_0}^2 H_0 \tilde{y}_0 \rangle + O(| \tilde{y}_0|^3) + \langle \omega_1, \tilde{y}_1\rangle \\
\label{Eq67}&~&+ \frac{\varepsilon}{2} \langle \tilde{y}_1, \partial_{y_1}^2 H_1 \tilde{y}_1 \rangle + O(|\tilde{y}_1|^3) + \varepsilon^{2} P(x, y),
\end{eqnarray}
where $\omega_0 = \partial_{y_0} H_0$, $\omega_1 = \partial_{y_1} H_1$, $\tilde{y}_0 = y_0 - \hat{y}_0$, $\tilde{y}_1 =  y_1 - \hat{y}_1.$
Under the following symplectic transformation:
$\tilde{y}_0 =  K_* p_1, \tilde{y}_1 =  K' p_2, q= (q_1, q_2) = K_0^T x,$
Hamiltonian $(\ref{Eq67})$ is changed to
\begin{eqnarray}
\nonumber H(p,q) &=& e+ \varepsilon \langle \omega_*, p_2\rangle + \frac{1}{2} \langle p_1, K_* \partial_{y_0}^2 H_0 K_* p_1 \rangle + \frac{\varepsilon}{2} \langle p_2, (K')^T \partial_{y_1}^2 H_1 K' p_2\rangle\\
\label{Eq68}&~&+ O(| K_* p_1|^3) + \varepsilon O(|K' p_2|^3) + \varepsilon^{2} \bar{P}(q,p),
\end{eqnarray}
where $\omega_* =  (K')^T \omega_1$, $\bar{P}(q,p) = P((K_0^T)^{-1} q, {y_0} + K_0 p)$, $p = (p_1, p_2)$. Obviously, in a $\varepsilon$-neighbourhood of $p_0$, $\bar{{P}}(q, p) =\tilde{P}(q, p_0) + O(|p - p_0|)=\tilde{P}(q, p_0)+ O(\varepsilon).$
Let $S (Y, q) = \langle Y, q \rangle + \varepsilon^2 \sum\limits_{k \in Z^{n_1}\setminus 0} \frac{\sqrt{-1} h_k}{\langle k, \omega_*\rangle}(q_1, \omega_*) e^{\sqrt{-1} \langle k, q_2\rangle},$ where $h_k = \int_{T^{n_1}} \bar{P}(q,p_0) e^{-\sqrt{-1}\langle k, q_2\rangle } dq_2$. Under the following symplectic transformation given by generating function $S$:\\ $(p, q~mod~2\pi)\mapsto (Y, X~mod~2\pi): p = \partial_q S(q,Y),~X =  \partial_Y S(q,Y),$
i.e., $p_2 = Y_2 - \varepsilon^2 \sum\limits_{k\in Z^{n_1} \setminus 0} k \frac{ h_k (p_0, q_1)}{\langle k, \omega_*\rangle}  e^{\sqrt{-1}\langle k, q_2\rangle},$ $X' = q_1,$ $p_1 = Y_1 +  \varepsilon^2  \sum\limits_{k\in Z^{n_1} \setminus 0} \partial_{q_1} \big(\frac{\sqrt{-1} h_k(p_0, q_1)}{\langle k, \omega_*\rangle}\big) e^{\sqrt{-1}\langle k, q_2\rangle},$ $X''= q_2,$ $(\ref{Eq68})$ is changed to
\begin{eqnarray}
\nonumber H(X,Y) & =&e+ \varepsilon \langle \omega_* , Y_2\rangle + \frac{1}{2} \langle Y_1, K_* \partial_{y_0}^2 H_0 K_* Y_1\rangle + \frac{\varepsilon}{2} \langle Y_2, (K')^T \partial_{y_1}^2 H_1 K' Y_2\rangle~~~~~\\
\label{Eq69} &~& + O(|K_* Y_1|^3) +\varepsilon O(|K'Y_2|^3)+ \check{h}_1+ \check{h}_2 + \check{h}_3,~~~~~
\end{eqnarray}
where  $\check{h}_1= \varepsilon^2 O(|Y_1|+ \varepsilon^2 + \varepsilon |Y_2|),$  $\check{h}_2=\varepsilon^2 O( \varepsilon^4 + |Y_1|^2 + \varepsilon^2 |Y_1| + \varepsilon |Y_2|^2 + \varepsilon^3 |Y_2|),$ $\check{h}_3=\frac{\varepsilon^2}{2} \langle X', \partial_{X'}^2 h_0 X'\rangle + \varepsilon^2 O(|X'|^3) + O(\varepsilon^3),$ $h_0 = \int_{T^{n_1}} \bar{P}(X,p_0)  dX''.$
Consider the following transformation: $Y_1 \rightarrow \varepsilon^{\frac{2}{3}}  Y_1, Y_2 \rightarrow \varepsilon^{\frac{2}{3}}  Y_2, X' \rightarrow X', X'' \rightarrow X'', H \rightarrow \varepsilon^{-\frac{2}{3}} H.$ Then Hamiltonian $(\ref{Eq69})$ is changed to
\begin{eqnarray}
\nonumber H(X,Y) &=&\frac{e}{\varepsilon^{\frac{2}{3}}} + \varepsilon \langle \omega_*, Y_2\rangle + \frac{\varepsilon^{\frac{2}{3}}}{2} \langle Y_1, K_* \partial_{y_0}^2 H_0 K_* Y_1\rangle  \\
\label{Eq70} &~&+ \frac{\varepsilon^{\frac{4}{3}}}{2} \langle X', \partial_{X'}^2 h_0 X'\rangle+ \varepsilon^{\frac{4}{3}} P(Y_1, Y_2,X),
\end{eqnarray}
where $P(Y_1, Y_2,q) = \frac{\varepsilon^{\frac{1}{3}}}{2} \langle Y_2, (K')^T \partial_{y_1}^2 H_1 K' Y_2\rangle + \varepsilon^{\frac{2}{3}} \hat{h}_1 + O(|K_* Y_1|^3)
 + O(|K' Y_2|^3) + \varepsilon^{\frac{2}{3}}\hat{h}_2 + \varepsilon^{\frac{4}{3}} O(|X'|^3) +   O(\varepsilon^{\frac{7}{3}}),$ $\hat{h}_1 = \varepsilon^{\frac{2}{3}} (|Y_1| + \varepsilon^{\frac{4}{3}} + \varepsilon |Y_2|),$
$\hat{h}_2 = \varepsilon^{\frac{2}{3}} ( \varepsilon^{\frac{10}{3}} + \varepsilon^{\frac{2}{3}} |Y_1|^2 + \varepsilon^2 |Y_1| + \varepsilon^{\frac{5}{3}} |Y_2|^2 + \varepsilon^3 |Y_2| ).$

Let $Y_2 = y$, $Y_1 = v$, $q_1 = u$, $q_2 = x$, $z = (u,v).$ Then $(\ref{Eq70})$ becomes
\begin{eqnarray}\label{End}
H(x,y,z) = \varepsilon \langle \omega_*, y\rangle + \frac{1}{2} \langle z, M z\rangle  + \varepsilon^{\frac{4}{3}} P(x,y,z),
\end{eqnarray}
where $M = \left(
         \begin{array}{cc}
           \varepsilon^{\frac{2}{3}} (K_*)^T \partial_{y_0}^2 H_0 K_* & 0 \\
           0 &  \varepsilon^{\frac{4}{3}}\partial_{X'}^2 h_0 \\
         \end{array}
       \right)
 ,$
$P= \frac{\varepsilon^{\frac{1}{3}}}{2} \langle y, (K')^T \partial_{y_1}^2 H_1 K' y\rangle + \varepsilon^{\frac{2}{3}} \hat{h}_1 + O(|K_* v|^3)  + O(|K' y|^3) + \varepsilon^{\frac{2}{3}}\hat{h}_2 +\varepsilon^{\frac{4}{3}} O(|u|^3) +   O(\varepsilon^{\frac{7}{3}}),$
$ \hat{h}_1 = \varepsilon^{\frac{2}{3}} (|v| + \varepsilon^{\frac{4}{3}} + \varepsilon |y|),\ \ \ \hat{h}_2 = \varepsilon^{\frac{2}{3}} ( \varepsilon^{\frac{10}{3}} + \varepsilon^{\frac{2}{3}} |v|^2 + \varepsilon^2 |v| + \varepsilon^{\frac{5}{3}} |y|^2 + \varepsilon^3 |y| ).$
Considering Hamiltonian (\ref{End}) on $D(r,s) = \{(x,y,z): |Im x|< r, |y|<s^2, |z|<s\}$ and combining the proof of the {Corollary \ref{cor.}}, {Theorem \ref{prop}} holds obviously under conditions ${(A1)}$, ${(A2)}$, ${(A3)}$ and ${(A4)}$.
\end{proof}

\end{example}

\begin{example}
Consider the following Hamiltonian systems:
\begin{eqnarray*}
H(x,y,z) = \varepsilon^2 \langle \omega, y\rangle + \frac{1}{2} \langle \left(
                                                                          \begin{array}{c}
                                                                            y \\
                                                                            u \\
                                                                            v \\
                                                                          \end{array}
                                                                        \right), \left(
                                                                                   \begin{array}{ccc}
                                                                                     \varepsilon \omega^2 I_2 & 0 & 0 \\
                                                                                     0 & \varepsilon^3 a & \omega^2 \\
                                                                                     0 & \omega^2 & \varepsilon^3 b \\
                                                                                   \end{array}
                                                                                 \right)\left(
                                                                          \begin{array}{c}
                                                                            y \\
                                                                            u \\
                                                                            v \\
                                                                          \end{array}
                                                                        \right)
\rangle + \varepsilon^4 P(x,y,z),
\end{eqnarray*}
where $(x,y,z)\in D(r,s) = \{(x,y,z): |Im x|< r, |y|< s= \varepsilon^{16}, |z|<s= \varepsilon^{16}\}$, $z= (u,v)$, $\omega= (\omega_1, \omega_2)\in \mathcal{O}$, a bounded closed region, $\omega^2  = \omega_1^2 + \omega_2^2$. Condition ${(D)}$ holds obviously. Let $M= \left(
                                                                                   \begin{array}{ccc}
                                                                                     \varepsilon \omega^2I_2 & 0 & 0 \\
                                                                                     0 & \varepsilon^3 a & \omega^2 \\
                                                                                     0 & \omega^2 & \varepsilon^3 b \\
                                                                                   \end{array}
                                                                                 \right).$ It is easy to check that
\begin{eqnarray*}
M^T M &=&\left(
        \begin{array}{ccc}
          \varepsilon^2 \omega^2 I_2 & 0 & 0 \\
          0 & \varepsilon^2 a^2 +\omega^4 & \varepsilon^3 a \omega^2 +\varepsilon^3 b\omega^2 \\
          0 & \varepsilon^3 \omega^2 a+ \varepsilon^3 \omega^2 b & \omega^4 + \varepsilon^6 b^2 \\
        \end{array}
      \right)\geq\varepsilon^6 I_{4}.
\end{eqnarray*}
Denote $\tilde{L}_{k1}=\sqrt{-1} \varepsilon^2\langle k, \omega\rangle I_{2} - {M_{22}} J,$ $
\tilde{L}_{k2}=\sqrt{-1} \varepsilon^2\langle k, \omega\rangle I_{4} - ({M_{22}} J) \otimes I_{2} - I_{2}\otimes({M_{22}} J),$
where $M_{22} = \left(
                  \begin{array}{cc}
                    \varepsilon^3 a & \omega^2 \\
                    \omega^2 & \varepsilon^3 b \\
                  \end{array}
                \right).
$
Then
\begin{eqnarray*}
\tilde{L}_{k1} &=& \left(
                   \begin{array}{cc}
                     \sqrt{-1}\varepsilon^2 \langle k, \omega\rangle+\omega^2 & -\varepsilon^3 a \\
                     \varepsilon^3 b & \sqrt{-1} \varepsilon^2 \langle k, \omega\rangle- \omega^2\\
                   \end{array}
                 \right),\\
\tilde{L}_{k2} &=& \left(
                     \begin{array}{cccc}
                       \sqrt{-1}\varepsilon ^2 \langle k, \omega\rangle+ 2 \omega^2 & -\varepsilon^3 a & -\varepsilon^3 a &0 \\
                       \varepsilon^3 b & \sqrt{-1}\varepsilon ^2 \langle k, \omega\rangle & 0 & -\varepsilon^3 a \\
                       \varepsilon^3 b & 0 & \sqrt{-1}\varepsilon ^2 \langle k, \omega\rangle & -\varepsilon^3 a \\
                       0 & \varepsilon^3 b & \varepsilon^3 b & \sqrt{-1}\varepsilon ^2 \langle k, \omega\rangle- 2 \omega^2 \\
                     \end{array}
                   \right).
\end{eqnarray*}
Therefore, $\partial_{\omega_1}\tilde{L}_{k1} = \left(
                   \begin{array}{cc}
                     2 & 0 \\
                     0 & -2\\
                   \end{array}
                 \right),$\\ $(\partial_{\omega}\tilde{L}_{k2}, \partial_{\omega}^2\tilde{L}_{k2})= \left(
                                                                          \begin{array}{ccccccc}
                                                                            0 & 0 & 0 & 0 & 4 & 0 & *  \\
                                                                            \sqrt{-1}\varepsilon^2 k_1 & \sqrt{-1}\varepsilon^2 k_2 & 0 & 0 & 0 & 0 & *  \\
                                                                            0 & 0 & \sqrt{-1}\varepsilon^2 k_1 & \sqrt{-1}\varepsilon^2 k_2 & 0 & 0 & *  \\
                                                                            0 & 0 & 0 & 0 & 0 & -4 & *  \\
                                                                          \end{array}
                                                                        \right),$\\
which implies conditions ${(M1')}$ and ${(M2')}$ hold. Therefore, using {Corollary \ref{cor.}}, there exist a $\varepsilon_0>0$ and a family of Cantor set $\mathcal{O}_\varepsilon \subset \mathcal{O},$ $0<\varepsilon<\varepsilon_0$, such that each $2$-torus $T_\omega$, $\omega \in \mathcal{O}_\varepsilon$, persists and gives rise to a perturbed $2$-torus $T_{\varepsilon,\omega}$ with the same frequency. Moreover, the relative Lebesgue measure $|\mathcal{O}\setminus \mathcal{O}_\varepsilon| \rightarrow 0$ as $\varepsilon \rightarrow 0.$
\end{example}

\begin{appendices}

      \section{Multi-scale matrix}\label{Inverse}
     \begin{lemma}
Let $D$ be a multi-scale matrix linearly dependent on $\varepsilon_i$, $0\leq i\leq m$, $\mu_j$, $0\leq j\leq l$, where $0< \varepsilon <\varepsilon_i, \mu_j \ll1$. Then the denominator of each elements of $\varepsilon^2 D^{-1}$ is independent of scales.
\end{lemma}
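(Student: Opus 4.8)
The plan is to push everything through Cramer's rule while carefully bookkeeping the scale parameters; write the scales as $\theta=(\varepsilon_0,\dots,\varepsilon_m,\mu_0,\dots,\mu_l)$ and keep in mind the standing relation $0<\varepsilon\le\theta_t$ for all $t$. The first and decisive step is to record the multi-scale block structure of $D$: after a permutation of rows and columns, $D$ splits into blocks such that each block is a fixed scale factor (a monomial $\theta^{\sigma}$, with $|\sigma|$ small and bounded) times a matrix with scale-free, $\lambda$-dependent core — for the matrices that actually occur this is the block pattern exhibited by $\mathcal A_1,\mathcal A_2$ in (\ref{Eq317})--(\ref{Eq318}) and by the normal-form matrix $\widetilde M$ produced in Section~\ref{041}, where for instance $\widetilde M=\varepsilon^{1/2}\,\mathrm{diag}(\widehat M_1,\varepsilon\widehat M_2)$ up to the $O(s)$ correction of the KAM step, with $\widehat M_1,\widehat M_2$ scale-free. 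In schematic form $D=\Theta\,\widehat D\,\Theta$, where $\Theta$ is block-diagonal with the scale monomials on the diagonal and $\widehat D$ is scale-free, modulo an $o(1)$ perturbation.

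Granting this normalization the rest is routine. By Cramer's rule applied to the scale-free factor, $\widehat D^{-1}=(\det\widehat D)^{-1}\mathrm{adj}(\widehat D)$, whose denominator $\det\widehat D$ is a polynomial in $\lambda$ alone, independent of the scales and nonvanishing on $\mathcal O$ under the nondegeneracy hypotheses. Hence
\[
D^{-1}=\Theta^{-1}\widehat D^{-1}\Theta^{-1},\qquad \varepsilon^{2}D^{-1}=\varepsilon^{2}\Theta^{-1}\,\frac{\mathrm{adj}(\widehat D)}{\det\widehat D}\,\Theta^{-1},
\]
so the $(i,j)$ block of $\varepsilon^{2}D^{-1}$ is $\varepsilon^{2}\theta^{-\sigma_i-\sigma_j}$ times a matrix all of whose entries have denominator $\det\widehat D$. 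Because $\varepsilon\le\theta_t$ for every scale and the total weight $|\sigma_i|+|\sigma_j|$ is at most $2$, the prefactor $\varepsilon^{2}\theta^{-\sigma_i-\sigma_j}$ is a bounded scalar — it belongs to the numerator, not the denominator — and therefore the denominator of every entry of $\varepsilon^{2}D^{-1}$ is $\det\widehat D$, which is scale-free, as claimed. The $O(s)$-corrections of the KAM step change $\widehat D$ only by $o(1)$, so they perturb $\det\widehat D$ without introducing any scale; this is exactly the Hermitian/Weyl perturbation mechanism already used in Lemmas~\ref{Lm1} and \ref{Eq53}.

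The main obstacle is the first step: proving that $D$ genuinely admits the single-weight-per-block normalization and, crucially, that the weight $|\sigma_i|+|\sigma_j|$ accumulated between any two blocks never exceeds $2$ — equivalently, that two powers of $\varepsilon$ are exactly enough to clear the scale factors. This is a combinatorial statement about which of the $\varepsilon_i,\mu_j$ enter which entries of the original Hamiltonian (\ref{701}) and how these weights propagate through the normal-form reduction, and it is the place where the specific multi-scale structure, rather than a crude homogeneity count, must be used. An equivalent formulation of the whole argument, avoiding the conjugation $D=\Theta\widehat D\Theta$, is via the generalized Laplace expansion of $\det D$ along the block pattern: it shows that every $(N-1)\times(N-1)$ cofactor of $D$ equals the scale-free factor $\det\widehat D$ times a scale monomial whose exponent differs from that of $\det D$ by at most $2$, which is precisely the assertion of the lemma once one multiplies $D^{-1}$ by $\varepsilon^{2}$.
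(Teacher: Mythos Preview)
Your approach has a genuine gap at precisely the point you yourself flag as ``the main obstacle.'' The factorization $D=\Theta\,\widehat D\,\Theta$ with block-diagonal scale monomials $\Theta$ and a scale-free core $\widehat D$ is \emph{not} a consequence of the hypothesis that $D$ is linear in the scales, and it fails already for the matrices you cite. For instance the $(2,2)$ block of $\mathcal A_1$ is $L_{k1}=\sqrt{-1}\langle k,\omega\rangle I_{2m}-M_{22}J$, which is an additive mixture of the $\varepsilon_{\tilde\iota_1}$'s (through $\omega$) and the $\mu_{\tilde\iota_2}$'s (through $M_{22}$): it cannot be written as a single scale monomial times a scale-free matrix, so no conjugation by a diagonal $\Theta$ will strip the scales out. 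The same obstruction occurs in every block of $\mathcal A_2$ that involves $L_{k1}$ or $L_{k2}$. Your ``equivalent formulation'' via the generalized Laplace expansion presupposes the same block-monomial structure and therefore inherits the same gap; the claim that each cofactor is a \emph{single} scale monomial times $\det\widehat D$ is exactly what fails when blocks carry sums of different scales.

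The paper's argument is different and does not rely on any such factorization. It applies Cramer's rule directly, writes the $(i,j)$ entry of $D^{-1}$ as the ratio $\bar D_{ji}/\det D$, expands $\det D$ along column $i$ so that the cofactor $\bar D_{ji}$ appears explicitly among the terms of the denominator, and then compares $\bar D_{ji}$ with a nonzero sibling cofactor $\bar D_{ki}$ (same column deleted, a different row deleted). Because these two minors differ in a single row, the Leibniz formula lets one control $\bar D_{ji}/\bar D_{ki}$ by a single linear factor in the scales; together with the linear factor $a_{ki}$ in front of $\bar D_{ki}$ in the expansion of $\det D$, this accounts for the two powers of $\varepsilon$. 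The point is that the argument is entrywise and uses only linearity of the entries in the scales, not any global block/monomial structure.
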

\begin{proof}
Directly,
\begin{eqnarray*}
D^{-1} &=& \frac{adj D}{\det D}=\frac{\big((-1)^{i+j} \tilde{D}_{ji}\big)_{1\leq i,j\leq n}}{\sum\limits_{k=1}^n (-1)^{k+j} d_{kj} \bar{D}_{kj}}\\
&=&\left(
      \begin{array}{cccc}
        \frac{\bar{D}_{11}}{\sum\limits_{k=1}^n(-1)^{k+1} a_{k1}\bar{D}_{k1}} &  -\frac{\bar{D}_{21}}{\sum\limits_{k=1}^n(-1)^{k+1} a_{k1}\bar{D}_{k1}} & \cdots &  \frac{(-1)^{n+1}\bar{D}_{n1}}{\sum\limits_{k=1}^n(-1)^{k+1} a_{k1}\bar{D}_{k1}} \\
        -\frac{\bar{D}_{12}}{\sum\limits_{k=1}^n(-1)^{k+2} a_{k2}\bar{D}_{k2}} & \frac{\bar{D}_{22}}{\sum\limits_{k=1}^n(-1)^{k+2} a_{k2}\bar{D}_{k2}}  & \cdots & \frac{(-1)^{n+2}\bar{D}_{n2}}{\sum\limits_{k=1}^n(-1)^{k+2} a_{k2}\bar{D}_{k2}}  \\
        \vdots & \vdots & \vdots & \vdots \\
        \frac{(-1)^{1+n}\bar{D}_{1n}}{\sum\limits_{k=1}^n(-1)^{k+n} a_{kn}\bar{D}_{kn}}  &  \frac{(-1)^{n}\bar{D}_{2n}}{\sum\limits_{k=1}^n(-1)^{k+n} a_{kn}\bar{D}_{kn}} & \cdots &  \frac{\bar{D}_{nn}}{\sum\limits_{k=1}^n(-1)^{k+n} a_{kn}\bar{D}_{kn}} \\
      \end{array}
    \right),
\end{eqnarray*}
where $\bar{D}_{ij}$ is the determinant of $(n+2m-1)\times(n+2m-1)$ matrix that results from deleting row $i$ and column $j$ of $D$.
The denominator of $\varepsilon^2 \frac{\bar{D}_{ji}}{\sum\limits_{k=1}^n(-1)^{k+1} a_{ki}\bar{D}_{ki}}$ is independent of $\varepsilon_{\iota_1}$, $1\leq\iota_1\leq m,$ $\mu_{\iota_2}$, $1\leq \iota_2 \leq l$ and $\varepsilon$. If $a_{ji}\neq 0$, it is obvious. If $a_{ji}= 0$, we claim there is a $\bar{D}_{ki} \neq 0$ such that the denominator of $\varepsilon \frac{\bar{D}_{ji}}{\bar{D}_{ki}}$ is independent of scales. Since $\sum\limits_{k=1}^n(-1)^{k+1} a_{ki}\bar{D}_{ki} \neq 0$, the existence of $\bar{D}_{ki}\neq 0$ is obvious. If $\bar{D}_{ji} = 0$, the result holds obviously. Since $\bar{D}_{ji}$ has only one different row with $\bar{D}_{ki}$, using the following equation $\det A = \sum\limits_{\sigma} ( sgn \sigma \prod\limits_{i=1}^n a_{i \sigma(i)}),$ in which the sum is over all $n!$ permutations of ${1, \cdots , n}$ and $sgn \sigma$, the "sign" of a permutation $\sigma$, is $+1$ or $-1$ according to whether the minimum
number of transpositions (pairwise interchanges) necessary to achieve it starting from ${1, \cdots , n}$ is even or odd, the claim holds obviously.

\end{proof}

The following Lemma comes from paper $\cite{Horn}$.
\begin{lemma}\label{LMU}
Let $A$, $E$ $\in$ $M_n$ and suppose that $A$ is normal. If $\hat{\lambda}$ is an eigenvalue of $A+E$, then there is an eigenvalue $\lambda$ of $A$ such that $|\hat{\lambda} - \lambda| \leq |||E|||_2$, where $|||E|||_2 = (tr E^* E)^{\frac{1}{2}}$.
\end{lemma}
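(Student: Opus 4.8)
The plan is to reduce the statement to a resolvent estimate that uses the normality of $A$ in an essential way. If $\hat\lambda$ is itself an eigenvalue of $A$, there is nothing to prove: take $\lambda=\hat\lambda$. So I would assume $\hat\lambda\notin\sigma(A)$, i.e.\ $A-\hat\lambda I$ is invertible. Since $A$ is normal, the spectral theorem provides a unitary $U$ with $A=U\Lambda U^{*}$, $\Lambda=\mathrm{diag}(\lambda_{1},\dots,\lambda_{n})$, $\lambda_{i}\in\sigma(A)$. Hence $(A-\hat\lambda I)^{-1}=U(\Lambda-\hat\lambda I)^{-1}U^{*}$ is again normal, with eigenvalues $(\lambda_{i}-\hat\lambda)^{-1}$, so its spectral (operator) norm equals $\big(\min_{i}|\lambda_{i}-\hat\lambda|\big)^{-1}$.

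Next I would use that $\hat\lambda$ is an eigenvalue of $A+E$: choose a unit eigenvector $v$, so $(A+E)v=\hat\lambda v$, equivalently $(A-\hat\lambda I)v=-Ev$, and therefore $v=-(A-\hat\lambda I)^{-1}Ev$ (note $Ev\neq 0$, otherwise $\hat\lambda\in\sigma(A)$). Taking Euclidean norms and using submultiplicativity of the operator norm together with $\|Ev\|\le\|E\|_{2}$,
\[
1=\|v\|\le\|(A-\hat\lambda I)^{-1}\|_{2}\,\|E\|_{2}\le\frac{\|E\|_{2}}{\min_{i}|\lambda_{i}-\hat\lambda|}\le\frac{|||E|||_{2}}{\min_{i}|\lambda_{i}-\hat\lambda|},
\]
where $\|E\|_{2}$ denotes the spectral norm of $E$ and the last step is the elementary bound $\|E\|_{2}=\sigma_{\max}(E)\le\big(\sum_{i}\sigma_{i}(E)^{2}\big)^{1/2}=(\mathrm{tr}\,E^{*}E)^{1/2}=|||E|||_{2}$. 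Rearranging gives $\min_{i}|\lambda_{i}-\hat\lambda|\le|||E|||_{2}$, and picking $\lambda$ to be an eigenvalue of $A$ attaining this minimum finishes the proof.

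The argument is short, and I do not expect a genuine obstacle — the result is classical (it is quoted from \cite{Horn}). The only point deserving care is the use of normality: it is precisely unitary diagonalizability that makes the resolvent norm exactly $1/\mathrm{dist}(\hat\lambda,\sigma(A))$, whereas for a merely diagonalizable $A$ one would incur an extra factor $\kappa(V)$ with $V$ the eigenvector matrix (the Bauer--Fike phenomenon), which would weaken the bound. Everything else is a routine norm-chasing argument.
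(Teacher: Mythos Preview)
Your argument is correct: this is exactly the standard Bauer--Fike resolvent argument for normal matrices, and the final passage from the spectral norm to $|||E|||_{2}=(\mathrm{tr}\,E^{*}E)^{1/2}$ via $\sigma_{\max}(E)\le\big(\sum_i\sigma_i(E)^2\big)^{1/2}$ is the right way to accommodate the paper's (Frobenius) normalization. The paper itself gives no proof of this lemma --- it is simply quoted from \cite{Horn} --- so there is nothing to compare; your write-up is precisely the classical proof one finds there.
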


\begin{lemma}\label{Eigenvalue}
Let $A= \varepsilon_0 A_0 + \cdots + \varepsilon_m A_m$ $\in$ $M_{n,n},$ where $0< \varepsilon_k \ll1$, $A_k = (a_{ij}^k)_{n\times n}$, $0\leq k\leq m$ . Denote the eigenvalue of $AA^*$ by $\lambda_1\leq \cdots\leq \lambda_n$. Then $\lambda_1 > c \min\limits_{1\leq j \leq n} \{\varepsilon_j\}$, where $c$ is positive and depends on $a_{ij}^k$, $1\leq i,j \leq n$, $0\leq k\leq m$.
\end{lemma}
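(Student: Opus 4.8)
The plan is to reduce the spectral inequality to a lower bound on a determinant (equivalently, on a Rayleigh quotient) and then to exploit that $A$ is \emph{linear} in the scales $\varepsilon_0,\dots,\varepsilon_m$. I would start from the variational identity $\lambda_1=\lambda_{\min}(AA^*)=\min_{\|u\|=1}\|A^*u\|^2$ together with the trivial upper bound $\lambda_n=\|A\|^2\le\big(\sum_{k=0}^m\varepsilon_k\|A_k\|\big)^2\le c_1$, where $c_1$ depends only on the entries $a_{ij}^k$ (using $\varepsilon_k\le 1$). Since $\lambda_1\lambda_2\cdots\lambda_n=\det(AA^*)=|\det A|^2$ and $\lambda_2,\dots,\lambda_n\le c_1$, this yields $\lambda_1\ge c_1^{-(n-1)}|\det A|^2$, so it suffices to bound $|\det A|$ from below.

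Next I would expand $\det A=\det\big(\sum_{k=0}^m\varepsilon_k A_k\big)$ by $n$-linearity in the rows, obtaining
\[
\det A=\sum_{(k_1,\dots,k_n)\in\{0,\dots,m\}^n}\varepsilon_{k_1}\cdots\varepsilon_{k_n}\,d_{k_1\cdots k_n},
\]
where $d_{k_1\cdots k_n}$ is the determinant of the matrix whose $i$-th row equals the $i$-th row of $A_{k_i}$; each such coefficient is a fixed number controlled by the $a_{ij}^k$. Writing $\varepsilon_*=\min_{0\le j\le m}\varepsilon_j$ and $t_k=\varepsilon_k/\varepsilon_*\ge 1$, this exhibits $\det A=\varepsilon_*^{\,n}Q(t_0,\dots,t_m)$ with $Q$ a fixed homogeneous polynomial of degree $n$, whence $\lambda_1\ge c_1^{-(n-1)}\varepsilon_*^{\,2n}|Q(t)|^2$. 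The stated inequality (and the sharper form $\lambda_1>c\,\varepsilon_*^2$ actually used in the measure estimate of Lemma~\ref{estimate measure}) then follows as soon as $|Q(t)|$ is bounded away from $0$ uniformly in the admissible ratios $t$.

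The hard part is exactly this last step: a nonzero homogeneous polynomial in several variables can vanish off the origin, so a uniform lower bound on $|Q|$ is not available from linear algebra alone and must come from the nondegeneracy in force wherever the lemma is invoked. In the application in Section~\ref{038} the matrix to which the lemma is applied is a nonsingular $n\times n$ block $A_\nu$ cut out of $\Omega_\nu Q_{\lambda_0,\nu}$ under condition ${(D)}$ — equivalently, the rows of the $A_k$ collectively span $R^n$ — and I would combine this with a compactness argument: the set of normalized scale ratios occurring in the problem is closed, $Q$ is continuous on it, and a degenerating sequence of scales would force some nontrivial combination $\sum_k s_k A_k$ (with at least one $s_k>0$) to be singular in a way ruled out by the standing hypothesis; therefore $\inf|Q|>0$ with a bound depending only on the $a_{ij}^k$.

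Finally I would record the small adjustment needed for the application: there one has not $AA^*$ but $A_\nu A_\nu^*+B_\nu B_\nu^*=\Omega_\nu\Omega_\nu^*$, which is a sum of positive semidefinite pieces and so is $\ge A_\nu A_\nu^*$; since $A_\nu$ is nonsingular the bound for $\lambda_{\min}(A_\nu A_\nu^*)$ proved above transfers directly, and the same $n$-linear expansion shows that the Taylor-remainder column built into $\Omega_\nu$ perturbs $Q$ only by a term of higher order in the scales, hence does not affect the conclusion.
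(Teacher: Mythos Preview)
Your route---reducing to a lower bound on $|\det A|^2$ via $\lambda_1\ge c_1^{-(n-1)}|\det A|^2$ and then expanding $\det A$ multilinearly in the scales---is genuinely different from the paper's. The paper instead writes $AA^*=(\max_j\varepsilon_j)^2 I_n + P_\varepsilon$, observes that the scalar part is normal, and applies the Bauer--Fike-type perturbation bound of Lemma~\ref{LMU} to obtain $|\lambda_1-(\max_j\varepsilon_j)^2|\le|||P_\varepsilon|||_2$; it then concludes $\lambda_1\ge\max_j\{\varepsilon_j^2\}-|||P_\varepsilon|||_2\ge c\min_j\{\varepsilon_j^2\}$ from a direct (though rather terse) estimate on the Frobenius norm of $P_\varepsilon$. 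Your observation that the lemma \emph{as literally stated} cannot hold without an implicit nondegeneracy hypothesis on the $A_k$ (e.g.\ $A_0=-A_1$, $\varepsilon_0=\varepsilon_1$ gives $\lambda_1=0$) is sharp, and it applies equally to the paper's final inequality; both arguments tacitly rely on the full-rank input that is present at the point of application in Lemma~\ref{estimate measure}.

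That said, your proposed repair via compactness has a real gap. The ratios $t_k=\varepsilon_k/\varepsilon_*$ are bounded below by $1$ but \emph{not above}, since the paper imposes no ordering among the $\varepsilon_k$; hence the domain on which you want $\inf|Q|>0$ is noncompact, and continuity of $Q$ plus nonvanishing at every finite point does not yield a uniform lower bound. To make this work you would need either an a~priori bound on the scale ratios, or to normalize by $\varepsilon^*=\max_j\varepsilon_j$ instead (so that the ratios lie in $[0,1]^{m+1}$, which \emph{is} compact) and then argue that the restriction of $Q$ to the relevant face---i.e.\ the coefficient carried by the dominant scale alone---is already nonsingular. The latter is essentially how the paper's application proceeds: in Lemma~\ref{estimate measure} the block $A_\nu$ extracted under condition~${(D)}$ is nonsingular outright, so the leading term controls everything and your determinant expansion would close without the compactness step.
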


\begin{proof}
Denote $A^*$ the conjugate transpose of $A$.
Directly, we have
\begin{eqnarray*}
AA^* &=& (\varepsilon_0 A_0 + \cdots + \varepsilon_m A_m) (\varepsilon_0 A_0^* + \cdots + \varepsilon_m A_m^*) \\
&=& \sum\limits_{j_1=0}^m \varepsilon_{j_1} A_{j_1}\sum\limits_{j_2 =0}^m \varepsilon_{j_2} A_{j_2}^*\\
&=&\sum\limits_{j_1=0}^m \sum\limits_{j_2=0}^m \varepsilon_{j_1}\varepsilon_{j_2} A_{j_1} A_{j_2}^*\\
&=& (\sum\limits_{k=1}^n \sum\limits_{j_1=0}^m \sum\limits_{j_2=0}^m \varepsilon_{j_1}\varepsilon_{j_2} a_{ik}^{j_1} a_{kj}^{j_2})_{n\times n}.
\end{eqnarray*}
Let $\hat{A} = \big(\max\limits_{1\leq j\leq m} \{\varepsilon_j\}\big)^2 I_n$ and $P_{\varepsilon} = AA^*- \hat{A}.$ Denote the eigenvalue of $AA^*$ by $\breve{\lambda}_1\leq \cdots\leq \breve{\lambda}_n.$ Obviously, $\hat{A}^* \hat{A} = \hat{A} \hat{A}^*$, i.e., $\hat{A}$ is normal.  According to Lemma \ref{LMU}, we have
\begin{eqnarray}\label{EQM1}
|\breve{\lambda}_1 -  \max\limits_{1\leq j\leq m} \{\varepsilon_j^2\}| \leq |||P_{\varepsilon}|||_2,
\end{eqnarray}
where $|||P_{\varepsilon}|||_2 = (tr P_{\varepsilon}^* P_{\varepsilon} )^{\frac{1}{2}}$. Denote $b_{ij}=\sum\limits_{k=1}^n \sum\limits_{j_1=0}^m \sum\limits_{j_2=0}^m \varepsilon_{j_1}\varepsilon_{j_2} a_{ik}^{j_1} a_{kj}^{j_2}$. Then $AA^* = (b_{ij})_{n\times n}.$ Let $P_{\varepsilon} = (c_{ij})_{n\times n}$, where $c_{ij}= b_{ij}$, $i\neq j$, $c_{ij} = b_{ij} - \big(\max\limits_{1\leq j\leq m} \{\varepsilon_j\}\big)^2$. Then
\begin{eqnarray*}
|||P_{\varepsilon}|||_2 &=& (tr P_{\varepsilon}^* P_{\varepsilon} )^{\frac{1}{2}}\\
&=& ( \sum\limits_{j=1}^n\sum\limits_{i=1}^n \bar{c}_{ij} c_{ij} )^{\frac{1}{2}},
\end{eqnarray*}
which implies $\min\limits_{j}\{\varepsilon_j^2\}\leq |||P_{\varepsilon}|||_2 \leq \max\limits_{j}\{\varepsilon_j^2\}.$ Hence,
\begin{eqnarray*}
\breve{\lambda}_1 &\geq& \max\limits_{j}\{\varepsilon_j^2\} -|||P_\varepsilon|||_2\\
&\geq& \max\limits_{j}\{\varepsilon_j^2\} - ( \sum\limits_{j=1}^n\sum\limits_{i=1}^n \bar{c}_{ij} c_{ij} )^{\frac{1}{2}}\\
&\geq& c \min\limits_{j}\{\varepsilon_j^2\},
\end{eqnarray*}
where $c$ is positive and depends on $a_{ij}^k$, $1\leq i,j \leq n$, $0\leq k\leq m$.
\end{proof}

\section{The nonsingular of $\mathcal{A}^*\mathcal{A}$}\label{Nonsin}
Directly, $\mathcal{A}^*\mathcal{A}
 = \left(
                               \begin{array}{ccc}
                                  a_{11}& a_{12} & 0 \\
                                 a_{21} & a_{22} & a_{23} \\
                                 0 & a_{32} & a_{33} \\
                               \end{array}
                             \right)= \mathcal{A}_2^* \mathcal{A}_2 + A_1,$
where
\begin{eqnarray*}
A_1 &=& \left(
                               \begin{array}{ccc}
                                  \hat{a}_{11}& \hat{a}_{12} & 0 \\
                                 \hat{a}_{21} & \hat{a}_{22} & \hat{a}_{23} \\
                                 0 & \hat{a}_{32} & \hat{a}_{33} \\
                               \end{array}
                             \right),\\
a_{11} &=& I_n \otimes (L_{k0}I_n)^* \cdot I_n \otimes (L_{k0}I_n) + I_n \otimes (L_{k0} I_n)^* \cdot I_n \otimes (\varpi I_n)\\
 &~&+ I_n \otimes (\varpi I_n)^*\cdot I_n \otimes (L_{k0} I_n) + I_n \otimes (\varpi I_n)^*\cdot  I_n \otimes \varpi I_n,\\
a_{12} &=& I_n\otimes (L_{k0} I_n)^* \cdot I_n \otimes (M_{21}^T J) +  I_n \otimes (L_{k0}I_n)^* \cdot I_n \otimes (\hat{h}_1 J)\\
&~&+I_n \otimes (\varpi I_n)^* \cdot I_n \otimes (M_{21}^T J) +  I_n \otimes (\varpi I_n)^* \cdot I_n\otimes (\hat{h}_1 J),\\
a_{21} &=& I_n\otimes (M_{21}^T J )^* \cdot I_n \otimes L_{k0}I_n + I_n\otimes (M_{21}^T J )^* \cdot I_n\otimes \varpi I_n\\
&~&I_n\otimes (\hat{h}_1 J )^* \cdot  I_n \otimes L_{k0}I_n +I_n\otimes (\hat{h}_1 J )^* \cdot I_n \otimes \varpi I_n,\\
a_{22} &=& I_n\otimes (M_{21}^T J)^* \cdot I_n \otimes (M_{21}^T J) + I_n \otimes (M_{21}^T J )^* \cdot I_n \otimes (\hat{h}_1 J)\\
 &~& +I_n \otimes (\hat{h}_1 J)^* \cdot I_n\otimes(M_{21}^T J) + I_n \otimes(\hat{h}_1 J)^*\cdot I_n \otimes (\hat{h}_1 J) \\
 &~&+I_n \otimes L_{k1}^*\cdot I_n\otimes L_{k1} - I_n \otimes L_{k1}^* \cdot I_n \otimes \varpi I_{2m}\\
 &~&- I_n \otimes L_{k1}^* \cdot I_n \otimes (\hat{h}_2 J) - I_n \otimes (\varpi I_{2m})^* \cdot I_n \otimes L_{k1}\\
 &~&+I_n \otimes (\varpi_{2m})^* \cdot I_n \otimes \varpi I_{2m} + I_n \otimes (\varpi_{2m})^* \cdot I_n \otimes (\hat{h}_2 J)\\
 &~&- I_n\otimes (\hat{h}_2 J)^* \cdot I_n \otimes L_{k1} + I_n\otimes (\hat{h}_2 J)^* \cdot I_n \otimes \varpi I_{2m} \\
 &~&+I_n\otimes (\hat{h}_2 J )^* \cdot I_n\otimes (\hat{h}_2 J ),\\
a_{23} &=& -I_n \otimes L_{k1}^* \cdot (M_{21}^T J) \otimes I_{2m} - I_n \otimes L_{k1}^* \cdot (\hat{h}_1 J) \otimes I_{2m}\\
 &~&+ I_n \otimes (\varpi I_{2m}) \cdot (M_{21}^T J)\otimes I_{2m}+I_n \otimes (\varpi I_{2m}) \cdot (\hat{h}_1 J)\otimes I_{2m}\\
 &~&+I_n \otimes (\hat{h}_2 J)^* \cdot (M_{21}^T J)\otimes I_{2m} + I_n \otimes (\hat{h}_2 J)^* \cdot (\hat{h}_1J) \otimes I_{2m},\\
a_{32} &=& -(M_{21}^T J)^* \otimes I_{2m}\cdot I_n\otimes L_{k1} +  (M_{21}^T J)^* \otimes I_{2m}\cdot I_n \otimes \varpi I_{2m}\\
 &~&+(M_{21}^T J)^*\otimes I_{2m}\cdot I_n \otimes (\hat{h}_2 J) - (\hat{h}_1 J )^*\otimes I_{2m} \cdot I_n \otimes L_{k1} \\
 &~& - (\hat{h}_1 J )^*\otimes I_{2m} \cdot I_n \otimes L_{k1} + (\hat{h}_1 J )^*\otimes I_{2m} \cdot I_n \otimes \varpi I_{2m}\\
 &~&+ (\hat{h}_1 J )^*\otimes I_{2m} \cdot I_n \otimes (\hat{h}_2J),\\
a_{33} &=&  (M_{21}^T J)^* \otimes I_{2m} \cdot M_{21}^T J \otimes I_{2m}+ (M_{21}^T J)^* \otimes I_{2m} \cdot (\hat{h}_1J) \otimes I_{2m}\\
 &~& (\hat{h}_1J)^* \otimes I_{2m} \cdot (M_{21}^T J) \otimes I_{2m} +(\hat{h}_1J)^* \otimes I_{2m} \cdot (\hat{h}_1J) \otimes I_{2m}\\
 &~&+L_{k2}^* L_{k2} + L_{k2}^*\cdot \varpi I_{4m^2} - L_{k2}^* \cdot(\hat{h}_2 J) \otimes I_{2m}- L_{k2}^* \cdot I_{2m} \otimes (\hat{h}_2 J) \\
 &~&  + (\varpi I_{4m^2})^* \cdot L_{k2} +(\varpi I_{4m^2})^* \cdot(\varpi I_{4m^2}) \\
 &~& - (\varpi I_{4m^2})^*\cdot(\hat{h}_2 J)\otimes I_{2m} - (\varpi I_{4m^2})^* \cdot I_{2m}\otimes (\hat{h}_2 J)\\
 &~& - (\hat{h}_2 J) \otimes I_{2m}\cdot L_{k2} -  (\hat{h}_2 J )^* \otimes I_{2m}\cdot \varpi I_{4m^2}\\
 &~& +(\hat{h}_2 J)^* \otimes I_{2m}\cdot (\hat{h}_2J)\otimes I_{2m} + (\hat{h}_2 J)^* \otimes I_{2m}\cdot I_{2m} \otimes(\hat{h}_2 J)\\
 &~& -I_{2m} \otimes (\hat{h}_2 J)^* \cdot L_{k2} -  I_{2m} \otimes (\hat{h}_2 J)^* \cdot(\varpi I_{4m^2})\\
 &~&  + I_{2m} \otimes (\hat{h}_2 J)^* \cdot (\hat{h}_2 J) \otimes I_{2m} + I_{2m} \otimes (\hat{h}_2 J)^* \cdot I_{2m}\otimes (\hat{h}_2 J),\\
\hat{a}_{11} &=&  I_n \otimes (L_{k0} I_n)^* \cdot I_n \otimes (\varpi I_n)+ I_n \otimes (\varpi I_n)^*\cdot I_n \otimes (L_{k0} I_n)  \\
 &~& + I_n \otimes (\varpi I_n)^*\cdot  I_n \otimes \varpi I_n,\\
\hat{a}_{12} &=&  I_n \otimes (L_{k0}I_n)^* \cdot I_n \otimes (\hat{h}_1 J)+I_n \otimes (\varpi I_n)^* \cdot I_n \otimes (M_{21}^T J)  \\
&~& +  I_n \otimes (\varpi I_n)^* \cdot I_n\otimes (\hat{h}_1 J),\\
\hat{a}_{21} &=&  I_n\otimes (M_{21}^T J )^* \cdot I_n\otimes \varpi I_n + I_n\otimes (\hat{h}_1 J )^* \cdot  I_n \otimes L_{k0}I_n\\
&~& +I_n\otimes (\hat{h}_1 J )^* \cdot I_n \otimes \varpi I_n,\\
\hat{a}_{22} &=&  I_n \otimes (M_{21}^T J )^* \cdot I_n \otimes (\hat{h}_1 J)+I_n \otimes (\hat{h}_1 J)^* \cdot I_n\otimes(M_{21}^T J)\\
 &~&  + I_n \otimes(\hat{h}_1 J)^*\cdot I_n \otimes (\hat{h}_1 J) - I_n \otimes L_{k1}^* \cdot I_n \otimes \varpi I_{2m} \\
 &~& - I_n \otimes L_{k1}^* \cdot I_n \otimes (\hat{h}_2 J) - I_n \otimes (\varpi I_{2m})^* \cdot I_n \otimes L_{k1}\\
 &~& +I_n \otimes (\varpi_{2m})^* \cdot I_n \otimes \varpi I_{2m} + I_n \otimes (\varpi_{2m})^* \cdot I_n \otimes (\hat{h}_2 J)\\
 &~& - I_n\otimes (\hat{h}_2 J)^* \cdot I_n \otimes L_{k1} + I_n\otimes (\hat{h}_2 J)^* \cdot I_n \otimes \varpi I_{2m} \\
 &~& +I_n\otimes (\hat{h}_2 J )^* \cdot I_n\otimes (\hat{h}_2 J ),\\
\hat{a}_{23} &=& - I_n \otimes L_{k1}^* \cdot (\hat{h}_1 J) \otimes I_{2m}~+ I_n \otimes (\varpi I_{2m}) \cdot (M_{21}^T J)\otimes I_{2m} \\
 &~& +I_n \otimes (\varpi I_{2m}) \cdot (\hat{h}_1 J)\otimes I_{2m}+I_n \otimes (\hat{h}_2 J)^* \cdot (M_{21}^T J)\otimes I_{2m}\\
 &~& + I_n \otimes (\hat{h}_2 J)^* \cdot (\hat{h}_1J) \otimes I_{2m},\\
\hat{a}_{32} &=&  (M_{21}^T J)^* \otimes I_{2m}\cdot I_n \otimes \varpi I_{2m}+(M_{21}^T J)^*\otimes I_{2m}\cdot I_n \otimes (\hat{h}_2 J) \\
 &~& - (\hat{h}_1 J )^*\otimes I_{2m} \cdot I_n \otimes L_{k1} - (\hat{h}_1 J )^*\otimes I_{2m} \cdot I_n \otimes L_{k1}\\
 &~&  + (\hat{h}_1 J )^*\otimes I_{2m} \cdot I_n \otimes \varpi I_{2m}+ (\hat{h}_1 J )^*\otimes I_{2m} \cdot I_n \otimes (\hat{h}_2J),\\
\hat{a}_{33} &=&   (M_{21}^T J)^* \otimes I_{2m} \cdot (\hat{h}_1J) \otimes I_{2m} +(\hat{h}_1J)^* \otimes I_{2m} \cdot (M_{21}^T J) \otimes I_{2m} \\
 &~&  +(\hat{h}_1J)^* \otimes I_{2m} \cdot (\hat{h}_1J) \otimes I_{2m} + L_{k2}^*\cdot \varpi I_{4m^2} - L_{k2}^* \cdot(\hat{h}_2 J) \otimes I_{2m} \\
 &~&  - L_{k2}^* \cdot I_{2m} \otimes (\hat{h}_2 J) + (\varpi I_{4m^2})^* \cdot L_{k2} +(\varpi I_{4m^2})^* \cdot(\varpi I_{4m^2}) \\
 &~&  - (\varpi I_{4m^2})^*\cdot(\hat{h}_2 J)\otimes I_{2m} - (\varpi I_{4m^2})^* \cdot I_{2m}\otimes (\hat{h}_2 J)\\
 &~&  - (\hat{h}_2 J) \otimes I_{2m}\cdot L_{k2} -  (\hat{h}_2 J )^* \otimes I_{2m}\cdot \varpi I_{4m^2}\\
 &~&  +(\hat{h}_2 J)^* \otimes I_{2m}\cdot (\hat{h}_2J)\otimes I_{2m} + (\hat{h}_2 J)^* \otimes I_{2m}\cdot I_{2m} \otimes(\hat{h}_2 J)\\
 &~&  -I_{2m} \otimes (\hat{h}_2 J)^* \cdot L_{k2} -  I_{2m} \otimes (\hat{h}_2 J)^* \cdot(\varpi I_{4m^2})\\
 &~&  + I_{2m} \otimes (\hat{h}_2 J)^* \cdot (\hat{h}_2 J) \otimes I_{2m} + I_{2m} \otimes (\hat{h}_2 J)^* \cdot I_{2m}\otimes (\hat{h}_2 J).
\end{eqnarray*}
For Hermitian matrix $A_1$, there is a unitary matrix $Q_2$ such that
$Q_2^* A_1 Q_2  = diag(\lambda_1, \cdots, \lambda_{(n+ 2m)^2 }),$
where $\lambda_{\min} = \lambda_1 \leq \cdots \leq \lambda_{(n+ 2m)^2} = \lambda_{\max}.$ Moreover,
\begin{eqnarray*}
|\lambda_{\min} |=| \min\limits_{\{x, 0\neq x \in S\}} \frac{x^* A_1 x}{x^* x}| &\leq& ||{A}_1||_\infty \leq |k| s\leq |k| s^{\frac{1}{2}} \varepsilon^2 \\
&\leq& \frac{1}{2} (\frac{\min\{\varepsilon_1, \cdots, \varepsilon_m, \mu_1, \cdots, \mu_l\}\gamma}{|k|^\tau})^2,
\end{eqnarray*}
where $S = span\{x_1, \cdots, x_{(n+ 2m)^2}\}$, ${A}_1 x_i = \lambda_i x_i$, $1\leq i\leq (n+ 2m)^2.$
According to Weyl Theorem, we have
\begin{eqnarray*}
\lambda_{\min} (\mathcal{A}^*\mathcal{A}) &=& \lambda_{\min} (\mathcal{A}_2^* \mathcal{A}_2 + {A}_1) \geq \lambda_{\min} (\mathcal{A}_2^* \mathcal{A}_2) +  \lambda_{\min} ({A}_1)\\
&\geq& \frac{1}{2} (\frac{\min\{\varepsilon_1, \cdots, \varepsilon_m, \mu_1, \cdots, \mu_l\}\gamma}{|k|^\tau})^2,
\end{eqnarray*}
which means that $\mathcal{A}^*\mathcal{A} \geq \frac{1}{2}(\frac{\min\{\varepsilon_1, \cdots, \varepsilon_m, \mu_1, \cdots, \mu_l\} \gamma}{|k|^\tau})^2 I_{(n+ 2m)^2}.$
\end{appendices}

\section*{Acknowledgements} The first author was supported by China Postdoctoral Science Foundation (2021M701396, 2022T150262), NSFC(12201243), ERC (PR1062ERC01). He sincerely thanks Professor Vadim Kaloshin for his support during the stay in ISTA, where part of the work was done. The second author was supported by NSFC (12225103, 12071065 and 11871140) and the National Key Research and Development Program of China (2020YFA0713602). The third author was supported by National Basic Research Program of China (2013CB834100), NSFC (12071175 and 11571065), JilinDRC (2017c028-1), JilinSTD (20190201302JC).
\section*{Data Availability Statements} Data sharing not applicable to this article as no datasets were generated or analysed during the current study.

\baselineskip 9pt \renewcommand{\baselinestretch}{1.08}

\end{document}